\newtheorem{thm}{Theorem}[section]
\newtheorem{prop}[thm]{Proposition}
\newtheorem{cor}[thm]{Corollary}
\newtheorem{lem}[thm]{Lemma}
\newtheorem{rem}[thm]{Remark}
\newtheorem{defn}[thm]{Definition}
\newtheorem{conj}[thm]{Conjecture}
\numberwithin{equation}{section}
\newcommand{\C}{\mathbb{C}}
\newcommand{\N}{\mathbb{N}}
\newcommand{\Z}{\mathbb{Z}}
\newcommand{\res}{\mbox{\rm Res}}
\newcommand{\wt}{\mbox{\rm wt}\ }
\begin{document}

\title[Generators and relations for $A_n(V)$]{On generators and relations for higher level Zhu algebras and applications}

\author{Darlayne Addabbo}
\address{Department of Mathematics University of Arizona, Tucson, AZ  85721}
\email{addabbo@math.arizona.edu}
\author{Katrina Barron}
\address{Department of Mathematics, University of Notre Dame, Notre Dame, IN 46556}
\email{kbarron@nd.edu}

%    General info
\subjclass{Primary 17B68, 17B69, 17B81, 81R10, 81T40, 81T60}

\date{March 17, 2023}

\thanks{D. Addabbo was supported by AMS-Simons Foundation Travel Grant; K. Barron was supported by Simons Foundation Collaboration Grant 282095.}

\keywords{Vertex operator algebras, conformal field theory, Virasoro algebra}

\begin{abstract}
We give some general results about the generators and relations for the higher level Zhu algebras for a vertex operator algebra.  In particular, for any element $u$ in a vertex operator algebra $V$, such that $u$ has weight greater than or equal to $-n$ for $n \in \mathbb{N}$, we prove a recursion relation in the $n$th level Zhu algebra $A_n(V)$ and give a closed formula for this relation.  We use this and other properties of $A_n(V)$ to reduce the modes of $u$ that appear in the generators for $A_n(V)$ as long as $u \in V$ has certain properties (properties that apply, for instance, to the conformal vector for any vertex operator algebra or if $u$ generates a Heisenberg vertex subalgebra), and we then prove further relations in $A_n(V)$ involving such an element $u$.   We present general techniques that can be applied once a set of reasonable generators is determined for $A_n(V)$ to aid in determining the relations of those generators, such as using the relations of those generators in the lower level Zhu algebras and the zero mode actions on $V$-modules induced from those lower level Zhu algebras.  We prove that the condition that $(L(-1) + L(0))v$ acts as zero in $A_n(V)$ for $n \in \mathbb{Z}_+$ and for all $v$ in $V$ is a necessary added condition in the definition of the Zhu algebra at level higher than zero.  We discuss how these results on generators and relations apply to the level $n$ Zhu algebras for the Heisenberg vertex operator algebra and the Virasoro vertex operator algebras at any level $n \in \mathbb{N}$.
\end{abstract}

\maketitle

\section{Introduction}
Given a vertex operator algebra $V$, Zhu introduced in \cite{Z}, an associative algebra  which was used to prove modular invariance of graded characters for $V$-modules in the case when $V$ is $C_2$-cofinite and rational.   In \cite{DLM},  Dong, Li, and Mason generalized these algebras, defining for every vertex operator algebra $V$, a family of associative algebras denoted $A_n(V)$, for $n\in \mathbb{N}$.  The $n=0$ algebra in this family is the original (level zero) associative algebra that Zhu introduced, and the algebras for $n>0$ are now known as ``higher level Zhu algebras".  These Zhu algebras are powerful tools in the study of both the representation theory of vertex operator algebras and the number-theoretic properties of the graded traces and ``pseudo-traces" of these modules.  However a major obstacle in realizing the full potential of these Zhu algebras is that they are extremely difficult to calculate. It is thus an important problem to develop methods for and insight into the construction of these algebras.

In this paper, we  prove general results regarding the generators and relations for the level $n$ Zhu algebra, $n \in \mathbb{N}$, for a vertex operator algebra $V$, and show how these results apply to the level $n$ Zhu algebras for the Heisenberg and Virasoro vertex operator algebras, as well as to the conformal vector or Heisenberg-like vectors in any vertex operator algebra.   We also give some clarifications on the necessity of a certain condition in the definition of the higher level Zhu algebras, i.e. for $n >0$. 

As was mentioned above, Zhu algebras are important tools in studying the representation theory of a vertex operator algebra.  In  \cite{FZ}, Frenkel and Zhu proved that there is a one-to-one correspondence between isomorphism classes of irreducible representations of a vertex operator algebra $V$ and isomorphism classes of irreducible representations of its (level zero) Zhu algebra $A_0(V)$.   However if $V$ admits indecomposable non irreducible modules (i.e., if $V$ is irrational), then $A_0(V)$ will not in general be sufficient for the correspondence between isomorphism classes of indecomposable modules for $V$ and for $A_0(V)$ to hold.   In \cite{DLM}, Dong, Li, and Mason define a functor, denoted $\Omega_n$, from the category of admissible $V$-modules, to the category of $A_n(V)$ modules and another functor, denoted $L_n$ from the category of $A_n(V)$-modules to the category of admissible $V$-modules and (under an added necessary condition provided in \cite{BVY}) prove that these functors realize a bijection between the isomorphism classes of irreducibles  for $V$ and for $A_n(V)$.   In \cite{BVY}, the second author along with Vander Werf and Yang  prove several results in the theory of higher level Zhu algebras which help clarify some statements of \cite{DLM} and further illuminate the relationship between  $V$-modules and $A_n(V)$-modules in the non semi-simple setting of indecomposable non irreducible modules for irrational vertex operator algebras. In particular, in \cite{BVY} the nature of the level $n$ Zhu algebra in relation to the level $n-1$ Zhu algebra was shown to determine whether new information about certain types of modules can be gleaned from the level $n$ Zhu algebra not already detected by the level $n-1$ Zhu algebra, and the nature of that information.

The second major application of higher level Zhu algebras, is to construct the graded pseudo-traces and analyze their number-theoretic properties.   An analogue of Zhu's modular invariance for graded characters (i.e., graded traces) for modules for a rational $C_2$-cofinite vertex operator algebra was provided by Miyamoto in \cite{Miyamoto2004}, where the notion of a graded ``pseudo-trace" was defined for irrational $C_2$-cofinite vertex operator algebras. While the graded characters of modules for rational vertex operator algebras encode information about the $L(0)$ eigenspaces of the modules, these graded pseudo-traces give information about the generalized eigenspaces. Where the original (level $0$) Zhu algebra plays an important role in Zhu's proof of modular invariance in the rational setting, the higher level Zhu algebras are central to Miyamoto's construction. 

In \cite{AN}, Arike and Nagatomo studied graded pseudo-traces for the $\mathbb{Z}_2$-orbifold of  symplectic fermions. While their construction notably does not use Zhu algebras, it is not known if their graded pseudo-traces coincide with those of Miyamoto. To the authors' knowledge, no examples of graded pseudo-traces have been explicitly determined using Miyamoto's construction, where the higher level Zhu algebras are necessary. Examples of these pseudo-traces using Miyamoto's construction will be provided by the second author and their collaborators in subsequent publications.

A major obstacle in utilizing higher level Zhu algebras in the study of vertex operator algebras and related questions of modular invariance, is that determining their structure is incredibly difficult. In \cite{BVY-Heisenberg} and \cite{BVY-Virasoro}, the second author along with Vander Werf and Yang gave the first explicit constructions of higher level Zhu algebras, determining the structure of $A_1(V)$ for $V$ the rank one Heisenberg vertex operator algebra and the Virasoro vertex operator algebra, respectively. They also proved that the level one Zhu algebra for the Virasoro vertex operator algebra can be used to induce new modules not induced by the level zero Zhu algebra. Later, in \cite{AB-details-of-Heisenberg}, the present authors determined the structure of the level two Zhu algebra for the Heisenberg vertex operator algebra; see Theorem \ref{A_2-theorem} below.   Higher level twisted and untwisted Zhu algebras were studied in \cite{V} by van Ekeren in the setting of affine and Virasoro vertex operator algebras.  And in \cite{C}, \^Ceperi\'c calculated the level one Zhu algebra for the fixed point subalgebra of symplectic fermions.

Notably in the constructions provided in \cite{BVY-Heisenberg}, \cite{BVY-Virasoro}, and \cite{AB-details-of-Heisenberg}, only very minimal facts about modules for the relevant vertex operator algebras are used. The purpose of this paper is to expand on the insight gained from these papers to provide tools for determining the structure of higher level Zhu algebras. In fact, the results of this paper require no knowledge of the modules of any of the vertex operator algebras to which our results apply. This is particularly important, as one would like to use higher level Zhu algebras to study the modules of vertex operator algebras for which very little information is known, e.g. irrational $C_2$-cofinite vertex operator algebras which are central to logarithmic conformal field theory but for which very few examples are currently known.

In this paper, we present results on generators and relations for higher level Zhu algebras that depend only on the definition of higher level Zhu algebras and, for some results, some relatively mild assumptions about the vertex operator algebra at hand. The main results of this paper are presented in Sections \ref{general-n-section} and \ref{definition-clarification-section}.  In the first subsection of Section \ref{general-n-section},  we prove a recursion formula for elements in $A_n(V)$ which applies to any vertex operator algebra $V$, and provides relations which are key to calculating $A_n(V)$ explicitly, in particular in determining minimal generating sets.  In the  second subsection of Section \ref{general-n-section}, we prove further results on generators and relations for $A_n(V)$, which apply to  a vertex operator algebra $V$ which has generators with certain nice properties. We give a  general formula for multiplication in higher level Zhu algebras for certain elements of the vertex operator algebra.  In particular, we prove that terms in multiplication formulas in the level $n$ Zhu algebras for these vertex operator algebra elements are indexed by partitions of $2n$.

In Section \ref{definition-clarification-section}, we provide an important clarification about the construction of higher level Zhu algebras and discuss the implications of this clarification for future constructions of these algebras. We note that in the definition of the level $n$ Zhu algebra $A_n(V)= V/O_n(V)$ for $n \in\mathbb{Z}_+$  in some instances, cf. \cite{Miyamoto2004}, the ideal $O_n(V)$ is defined to be span$\{ u \circ_n v \; | \; u,v \in V\}$ and it is implicitly assumed that this ideal contains the elements of the form $(L(-1) + L(0))v$ for $v\in V$.  This assumption is likely due to the fact that in the original Zhu algebra, i.e., the $n=0$ Zhu algebra, the vectors $(L(-1)+L(0))v\in \mathrm{span} \{ u \circ_n v \; | \; u,v \in V\}$ for all $v\in V$, see Remark \ref{first-remark}. In this paper, we prove that for Zhu algebras of level $n>0$, it is a necessary extra condition to include the vectors $(L(-1) + L(0))v$ for $v\in V$ in the definition of $O_n(V)$, as they are not in general in span$\{ u \circ_n v \; | \; u,v \in V\}$.  In addition, we give certain general cases for when these vectors will not be in this spanning set and thus give new relations in $A_n(V)$.  

In Section 5, we highlight some aspects of Zhu algebras that provide techniques for calculating higher level Zhu algebras.  These techniques include utilizing the structure of  the level $n-1$ Zhu algebra and the $V$-modules induced at level $n-1$ to help determine the relations for the level $n$  Zhu algebra.

In Section 6, we give applications of our results for the vertex operator algebras associated to the Heisenberg and Virasoro algebras, respectively, recall the cases for which the higher level Zhu algebras have been explicitly calculated for these vertex operator algebras, and present a conjecture from \cite{AB-details-of-Heisenberg} about the structure for all higher level Zhu algebras for the Heisenberg vertex operator algebra.

{\bf Acknowledgments: } The authors thank the Simons Foundation for their support. The first author was a recipient of an AMS-Simons Foundation Travel Grant, and the second author was the recipient of a Simons Foundation Collaboration Grant 282095.  

{\bf Notation:} Throughout this paper, $\mathbb{N}$ denotes the nonnegative integers and $\mathbb{Z}_+$ denotes the positive integers.

\section{The algebras $A_n(V)$, and the functors $\Omega_n$ and $L_n$}\label{zhu-algebra-definition-section}

In this section, letting $V$ be a vertex operator algebra, we recall the definition and some properties of the algebras $A_n(V)$ for $n \in \N$, first introduced in \cite{Z} for $n = 0$, and then generalized to $n >0$ in \cite{DLM}.   We then recall the functors $\Omega_n$ and $L_n$ defined in \cite{DLM} and some results from \cite{BVY}.

For $n \in \N$, let $O_n(V)$ be the subspace of $V$ spanned by elements of the form
\begin{equation}\label{define-circ}
u \circ_n v =
\res_x \frac{(1 + x)^{\mathrm{wt}\, u + n}Y(u, x)v}{x^{2n+2}}
\end{equation}
for all homogeneous $u \in V$ and for all $v \in V$, and by elements of the form $(L(-1) + L(0))v$ for all $v \in V$. The vector space $A_n(V)$ is defined to be the quotient space $V/O_n(V)$.

That is, using the notation introduced in \cite{BVY},  we define 
\begin{equation}\label{define-O-subsets} 
O^L (V) = \{ (L(-1) + L(0))v \, | \, v \in V \} \quad
\mathrm{and}
\quad O_n^\circ(V) = \mathrm{span} \{ u \circ_n v \; | \; u,v \in V\} ,
\end{equation} 
then
\begin{equation}\label{define-O}  
O_n(V) =  O^L (V) + O_n^\circ(V) .
\end{equation}
We will sometimes refer to $\circ_n$ as the ``circle product".

\begin{rem}\label{first-remark}{\em
For $n=0$, since $v \circ_0 \mathbf{1} = v_{-2} \mathbf{1} + (\mathrm{wt} \, v) v = L(-1) v + L(0) v$, it 
follows that $(L(-1) + L(0)) v \in O^\circ_0(V)$ for all $v \in V$ and thus $O_0(V) = O_0^\circ(V)$. 
However, we show in Section \ref{definition-clarification-section} that for $n > 0$, in 
general, $O^L (V) \not\subset O_n^\circ(V)$.  Thus for $A_n(V)$ to have a well-defined action on 
modules via zero modes,  it is necessary to not just define $O_n(V)$ to consist of $O_n^\circ(V)$ as is
 sometimes done, but to also include $O^L(V)$ as part of the definition; cf. \cite{DLM}, 
 \cite{Miyamoto2004}.  This is simply due to the fact that from the $L(-1)$-derivative property, the zero
 mode of $L(0)v$ which is $(\mathrm{wt} \, v)v_{\mathrm{wt} \, v - 1}$ is equal to the negative of the zero mode of $L(-1)v$.  We show in Section 4 that in general $O^L (V) \not\subset O_n^\circ(V)$, by showing that, for instance, any time $V$ is strongly generated (as defined in Section \ref{generators-section}) by an element $u$ such that $1 \leq \mathrm{wt} \, u \leq n$, then $(L(-1) + L(0) )u \notin O_n^\circ(V)$.  More generally, for any $V$ with  $u \in V$ of positive weight or of weight zero such that $u_{-2} {\bf 1} \neq 0$, and such that the vertex subalgebra generated by $u$ is strongly generated, then one can have $(L(-1) + L(0) )u \notin O_n^\circ(V)$ if $n >0$.    For instance, this holds for the Heisenberg vertex operator algebra generated by a weight one element $u = \alpha(-1) \mathbf{1}$, or for such weight one elements $u \in V$ where there is not linear dependence between $(L(-1) + L(0))u$ and circle products of vectors in $V$ not generated by $u$.  This also holds for the vertex operator subalgebra of $V$ generated by the conformal element if $n>1$.   And finally we note that our result also pertains to the case when $V$ has a nontrivial vacuum (weight zero) space containing any vectors $u$ such that $u_{-2} {\bf 1} \neq 0$.  Thus there are common settings in which a vertex operator algebra $V$ has the property that $O^L (V) \not\subset O_n^\circ(V)$.  More  importantly, as we point out in Remark \ref{OL-remark}  below, the presence of these $(L(-1) + L(0)) v \in 
O_n(V)$ is used heavily in the constructions of the higher level Zhu algebras in \cite{BVY-Heisenberg}, \cite{BVY-Virasoro},  and \cite{AB-details-of-Heisenberg}. }
\end{rem}

We define the following multiplication on $V$
\[
u *_n v = \sum_{m=0}^n(-1)^m\binom{m+n}{n}\res_x \frac{(1 + x)^{\mathrm{wt}\, u + n}Y(u, x)v}{x^{n+m+1}},
\]
for $v \in V$ and homogeneous $u \in V$, and for general $u \in V$, $*_n$ is
defined by linearity.   It is shown in \cite{DLM} that with this multiplication, the subspace $O_n(V)$ of $V$ is a two-sided ideal of $V$, and $A_n(V)$ is an associative algebra, called the {\it level $n$ Zhu algebra}.

\begin{rem} {\em Due to the truncation condition on $V$, the expressions $u \circ_n v$ and $u *_n v$ in the definition of $A_n(V)$ are indeed well-defined elements of $V$, even if $\mathrm{wt} \, u + n <0$, i.e. they are finite sums of elements in $V$.  Thus, it is unnecessary (as is often done in the literature) to assume that $V$ is positively graded.  In fact, notably many of our results in this paper apply to elements in $V$ of negative weight.}
\end{rem}

Next we prove a lemma stating that certain elements in $V$, namely those of the form (\ref{more-elements}) given below, lie in $O_n(V)$, and in fact, more precisely in $O_n^\circ(V)$.  In \cite{Z}, Zhu proved that such elements lie in $O_n(V)$ in the case when $n = 0$, and his proof extends easily to the $n>0$ case.  In \cite{DLM} this result without proof was used to show that there is a surjective homomorphism from $A_n(V)$ onto $A_{n-1}(V)$ for any $n \in \mathbb{Z}_+$.  Here we provide the proof so as to point out a refinement of the result given in \cite{DLM}, namely that these elements lie not just in $O_n(V)$, but in $O_n^\circ(V)$.  We will use this more precise statement later in Section \ref{general-n-section}, in particular to prove the useful recursion formula in Proposition \ref{recursion-n}.

\begin{lem}\label{Zhu-lemma}
For every homogeneous element $u \in V$ and for $m, k \in \mathbb{N}$ with $m \geq k \geq 0$, elements of the form
\begin{equation}\label{more-elements}
\res_x \frac{(1 + x)^{\mathrm{wt}\, u + n+ k}Y(u, x)v}{x^{m+2n+2}}
\end{equation}
lie not only in $O_n(V)$, but in $O_n^\circ(V)$.  
\end{lem}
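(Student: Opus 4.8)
The key observation is that the claimed element can be rewritten as a circle product applied to a shifted vector, modulo lower-complexity terms, so that an induction on $m$ (or on $m-k$) collapses it into $O_n^\circ(V)$. First I would handle the base case $m = k = 0$: the expression
\[
\res_x \frac{(1+x)^{\mathrm{wt}\,u + n}Y(u,x)v}{x^{2n+2}}
\]
is literally $u \circ_n v$, which lies in $O_n^\circ(V)$ by definition. More generally, when $k = 0$ and $m$ is arbitrary, one can increase the power of $x$ in the denominator by one at the cost of passing to $v' = u_{-1}\mathbf{1}$-type manipulations; but the cleaner route is to exploit the extra factors of $(1+x)$ in the numerator.

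The main step is to compare the general element (\ref{more-elements}) with $u \circ_n w$ for a suitably chosen $w$, or with $u' \circ_n v$ for a modified $u'$. The natural candidate is to write $(1+x)^{\mathrm{wt}\,u + n + k} = (1+x)^k \cdot (1+x)^{\mathrm{wt}\,u+n}$ and expand $(1+x)^k = \sum_{j=0}^{k}\binom{k}{j}x^j$, giving
\[
\res_x \frac{(1+x)^{\mathrm{wt}\,u+n+k}Y(u,x)v}{x^{m+2n+2}} = \sum_{j=0}^{k}\binom{k}{j}\res_x \frac{(1+x)^{\mathrm{wt}\,u+n}Y(u,x)v}{x^{m-j+2n+2}}.
\]
Since $m \geq k \geq j$, each term on the right has denominator exponent $m - j + 2n + 2 \geq 2n + 2$, i.e. it is of the form (\ref{more-elements}) with $k$ replaced by $0$. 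So it suffices to treat the case $k = 0$, and there, for $m \geq 1$, one reduces the exponent $m$ by using the identity relating $\res_x x^{-(m+2n+2)}(1+x)^{\mathrm{wt}\,u+n}Y(u,x)v$ to a circle product $u \circ_n (v'')$ where $v''$ is obtained from $v$ by applying modes of $L(-1)$ (or equivalently using the skew-symmetry / $L(-1)$-conjugation formula $Y(u,x)v = e^{xL(-1)}Y(v,-x)u$, or the commutator formula) to shift the pole order. Carrying this out, one gets that (\ref{more-elements}) with $k = 0$ and $m \geq 1$ equals $u \circ_n(\text{something}) + (\text{terms with smaller } m)$, and induction on $m$ finishes.

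The hard part will be the bookkeeping in the inductive step for $k = 0$: producing the precise identity that lowers the pole order $m$ while staying inside $O_n^\circ(V)$ rather than merely inside $O_n(V)$. This is exactly the refinement over \cite{DLM} that the lemma advertises, so the point is to be careful that no $O^L(V)$ contribution sneaks in — in particular, one must avoid the $n = 0$ shortcut $v \circ_0 \mathbf{1} = (L(-1) + L(0))v$ and instead use a relation that is manifestly a circle product. I expect the cleanest way is to show directly that for $m \geq 1$,
\[
\res_x \frac{(1+x)^{\mathrm{wt}\,u+n}Y(u,x)v}{x^{m+2n+2}} \in O_n^\circ(V)
\]
by writing it as a combination of $u \circ_n v$ and $u \circ_n(L(-1)v + (\mathrm{wt}\,v)v)$-type expressions — no, rather, by recognizing it as $\tilde u \circ_n v$ for $\tilde u$ a mode-shift of $u$, since $\res_x x^{-(m+2n+2)}(1+x)^N Y(u,x)v$ with the numerator exponent held fixed is again a circle product of $u$ with $v$ after reindexing the residue; the combinatorial identity $\binom{N}{i} = \sum \binom{\cdot}{\cdot}\binom{\cdot}{\cdot}$ matching the two expansions is where the routine computation lives. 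Once the $k=0$, $m\geq 1$ reduction is established, the general statement follows by the binomial expansion above together with downward induction on $m$ to the base case $u\circ_n v$.
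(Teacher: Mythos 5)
Your outer structure matches the paper's proof exactly: the binomial expansion $(1+x)^{k}=\sum_{j=0}^{k}\binom{k}{j}x^{j}$ reduces everything to the case $k=0$ with $m-j\geq 0$, the base case $m=0$ is the definition of $u\circ_n v$, and the argument is completed by induction on $m$. The gap is that the inductive step --- which is the entire content of the lemma --- is never actually produced. You list several candidate mechanisms (skew-symmetry, $L(-1)$-conjugation, the commutator formula, ``recognizing it as $\tilde u\circ_n v$ for $\tilde u$ a mode-shift of $u$'') without committing to or verifying any of them, and you explicitly defer ``the precise identity that lowers the pole order.'' Skew-symmetry and the commutator formula are not the right tools here, and the expression at pole order $m+2n+2$ is \emph{not} literally a circle product of a single modified vector with $v$.

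The identity that does the work is the vanishing of the residue of a total derivative combined with the $L(-1)$-derivative property $\frac{d}{dx}Y(u,x)v=Y(L(-1)u,x)v$: one computes
\[
0=\res_x \frac{d}{dx}\left(\frac{(1+x)^{\mathrm{wt}\,u+n+1}Y(u,x)v}{x^{j+2n+2}}\right),
\]
expands, and observes that after splitting $(1+x)^{\mathrm{wt}\,u+n+1}=(1+x)(1+x)^{\mathrm{wt}\,u+n}$ in the relevant term, every term except the one at pole order $(j+1)+2n+2$ is of the form $\res_x (1+x)^{\mathrm{wt}\,w+n}Y(w,x)v/x^{j+2n+2}$ with $w=u$ or $w=L(-1)u$ (note $\mathrm{wt}\,L(-1)u=\mathrm{wt}\,u+1$), hence lies in $O_n^\circ(V)$ by the inductive hypothesis; since the coefficient $j+2n+2$ is nonzero, the $m=j+1$ term does too. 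Two points you would need to make explicit for this to close: the induction on $m$ must be carried out uniformly over all homogeneous $u$ at once, because the step for $u$ invokes the hypothesis for the different vector $L(-1)u$; and every term generated is manifestly of the residue form already known to lie in $O_n^\circ(V)$, so no $O^L(V)$ contribution enters --- which settles the concern you raised but did not resolve.
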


 \begin{proof}
 The proof follows from the definition of $O_n^\circ(V)$ and the $L(-1)$-derivative property for $V$.  First we note that $m\geq k$ implies that $m-j \geq 0$ for $j =0,\dots,k$, and thus since 
\begin{equation}
\res_x \frac{(1 + x)^{\mathrm{wt}\, u + n+ k}Y(u, x)v}{x^{m+2n+2}} = \sum_{j=0}^k
\res_x \binom{k}{j} \frac{(1 + x)^{\mathrm{wt}\, u + n}Y(u, x)v}{x^{m+2n+2-j}}
\end{equation}
we need only prove the statement for the case when $k = 0$ and $m\geq 0$.  

We then proceed by induction on $m$.  For $k= 0$ and $m = 0$, the statement follows immediately from the definition of $\circ_n$.  Assume the statement holds for $k = 0$ and $m= j$.   Then for $m = j+1$, by the inductive assumption, we have that 
\begin{equation}\label{inductive-assumption} 
 \res_x \frac{(1 + x)^{  \mathrm{wt}\, w + n}Y(w, x)v}{x^{j+2n+2}}  \in O_n^\circ(V) \quad \mbox{ for both $w = u$ and $w = L(-1)u$,}
 \end{equation}
where we note that $\mathrm{wt} \, L(-1) u = \mathrm{wt} \, u + 1$.  Then by  the definition of $\res_x$, and the $L(-1)$-derivative property in $V$ , we have
\begin{eqnarray*}
0 &=& \res_x \frac{d}{dx} \left( \frac{(1 + x)^{\mathrm{wt}\, u + n  + 1}Y(u, x)v}{x^{j+2n+2}}\right) \\
&=&  \res_x (\mathrm{wt} \, u + n  + 1)   \frac{(1+x)^{ \mathrm{wt} \, u + n  } Y(u, x)v}{x^{j+2n+2}} - \res_x 
(j + 2n + 2) \frac{(1 + x)^{\mathrm{wt}\, u + n  + 1} Y(u, x)v}{x^{j+2n+3}}  \\
& & \quad + \, \res_x \frac{(1 + x)^{\mathrm{wt}\, u + n  + 1}\frac{d}{dx}Y(u, x)v}{x^{j+2n+2}} \\
&=& \res_x (\mathrm{wt} \, u + n + 1 )   \frac{(1+x)^{ \mathrm{wt} \, u + n } Y(u, x)v}{x^{j+2n+2}} -  (j + 2n + 2)  \left( \res_x 
\frac{(1 + x)^{\mathrm{wt}\, u + n} Y(u, x)v}{x^{(j+1)+2n+2}}  \right.  \\
& & \quad \left.  + \,  \res_x 
\frac{(1 + x)^{\mathrm{wt}\, u + n} Y(u, x)v}{x^{j+2n+2}} \right) 
+  \res_x \frac{(1 + x)^{\mathrm{wt}\, u + n  + 1 }Y(L(-1) u, x)v}{x^{j+2n+2}} \\
 &=&  (\mathrm{wt} \, u -j-n-1 )  \res_x   \frac{(1+x)^{ \mathrm{wt} \, u + n } Y(u, x)v}{x^{j+2n+2}} -  (j + 2n + 2)   \res_x 
\frac{(1 + x)^{\mathrm{wt}\, u + n} Y(u, x)v}{x^{(j+1)+2n+2}}   \\
& &   \quad + \, \res_x \frac{(1 + x)^{\mathrm{wt}\, u + n  + 1}Y(L(-1) u, x)v}{x^{j+2n+2}} . 
\end{eqnarray*}
By Eqn. \eqref{inductive-assumption}, the first and last terms of the right hand side are in $O^\circ_n(V)$ , and thus
\[  (j + 2n + 2)  \res_x  \frac{(1 + x)^{\mathrm{wt}\, u + n} Y(u, x)v}{x^{(j+1)+2n+2}} \in O_n^\circ(V).\]
Since $j + 2n + 2 >0$, the result holds.  
 \end{proof}

Lemma \ref{Zhu-lemma} implies that $O^\circ_n(V) \subset O^\circ_{n-1}(V)$, and thus $O_n(V) \subset O_{n-1}(V)$.  In fact, from Proposition 2.4 in \cite{DLM}, we have that the map 
\begin{eqnarray}\label{surjection}
A_n(V) & \longrightarrow & A_{n-1}(V) \\
v + O_n(V) & \mapsto & v + O_{n-1}(V) \nonumber
\end{eqnarray}
is a surjective algebra homomorphism.

Next we recall the definitions of various $V$-module structures and then briefly introduce the functors that associate modules for $A_n(V)$ with certain types of modules for $V$.  We assume the reader is familiar with the notion of weak $V$-module for a vertex operator algebra $V$ (cf. \cite{LL}).  

\begin{defn}\label{N-gradable-definition}
{\em An {\it $\mathbb{N}$-gradable weak $V$-module} (also often called an {\it admissible $V$-module} as in \cite{DLM}) $W$ for a vertex operator algebra $V$ is a weak $V$-module that is $\mathbb{N}$-gradable, $W = \coprod_{k \in \mathbb{N}} W(k)$, with $v_m W(k) \subset W(k + \mathrm{wt} v - m -1)$ for homogeneous $v \in V$, $m \in \mathbb{Z}$ and $k \in \N$, and without loss of generality, we can and do assume $W(0) \neq 0$, unless otherwise specified.  We say elements of $W(k)$ have {\it degree} $k \in \mathbb{N}$.

An {\it $\mathbb{N}$-gradable generalized weak $V$-module} $W$ is an $\mathbb{N}$-gradable weak $V$-module that admits a decomposition into generalized eigenspaces via the spectrum of $L(0) = \omega_1$ as follows: $W=\coprod_{\lambda \in{\C}}W_\lambda$ where $W_{\lambda}=\{w\in W \, | \, (L(0) - \lambda \, id_W)^j w= 0 \ \mbox{for some $j \in \mathbb{Z}_+$}\}$, and in addition, $W_{n +\lambda}=0$ for fixed $\lambda$ and for all sufficiently small integers $n$. We say elements of $W_\lambda$ have {\it weight} $\lambda \in \mathbb{C}$.

A {\it generalized $V$-module} $W$ is an $\mathbb{N}$-gradable generalized weak $V$-module where $\dim W_{\lambda}$ is finite for each $\lambda \in \mathbb{C}$.   

An {\it (ordinary) $V$-module} is a  generalized $V$-module such that  the generalized eigenspaces $W_{\lambda}$ are in fact eigenspaces, i.e., $W_{\lambda}=\{w\in W \, | \, L(0) w=\lambda w\}$.}
\end{defn}

We will often omit the term ``weak" when referring to $\mathbb{N}$-gradable weak and $\mathbb{N}$-gradable generalized weak $V$-modules.   

The term {\it logarithmic} is also often used in the literature to refer to $\mathbb{N}$-gradable weak generalized modules  or generalized modules. 

Let $W$ be an $\mathbb{N}$-gradable $V$-module, and let
\begin{equation}\label{Omega}
\Omega_n(W) = \{w \in W \; | \; v_iw = 0\;\mbox{if}\; \wt v_i < -n \; 
\mbox{for $v\in V$ of homogeneous weight}\}.
\end{equation}
It was shown in \cite{DLM} that $\Omega_n(W)$ is an $A_n(V)$-module
via the action $o(v+O_n(V)) = v_{\mathrm{wt} \, v -1}$ for $v \in V$.   In particular, this action satisfies $o(u *_n v) = o(u)o(v)$ for $u,v \in A_n(V)$.

Furthermore, it was shown in \cite{DLM} and \cite{BVY} that there is a bijection between the isomorphism classes of irreducible $A_n(V)$-modules which cannot factor through $A_{n-1}(V)$ and the isomorphism classes of irreducible $\mathbb{N}$-gradable $V$-modules with nonzero degree $n$ component.  

The functor $L_n$ from the category of $A_n(V)$-modules to the category of $\mathbb{N}$-gradable $V$-modules is more complicated to define and thus we refer the reader to \cite{DLM} and \cite{BVY} for the definition.

We have the following theorem from \cite{BVY}:

\begin{thm}\label{mainthm}\cite{BVY}
For $n \in \N$, let $U$ be a nonzero $A_n(V)$-module such that if $n>0$, then $U$ does not factor through $A_{n-1}(V)$. Then $L_n(U)$ is an $\mathbb{N}$-gradable $V$-module with $L_n(U)(0) \neq 0$.  If we assume further that there is no nonzero submodule of $U$ that factors through $A_{n-1}(V)$, then $\Omega_n/\Omega_{n-1}(L_n(U)) \cong U$.
\end{thm}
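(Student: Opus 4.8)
The plan is to unpack the definitions of $L_n$ and $\Omega_n$ from \cite{DLM}, \cite{BVY} and verify the two assertions in turn. First I would recall that $L_n(U)$ is built as a quotient of a certain ``induced'' object: one starts from a free-type construction on $V \otimes U$ (or rather on a tensor algebra / universal object with the appropriate relations coming from the Jacobi identity and from the $A_n(V)$-action being the zero-mode action), and then quotients by the maximal graded submodule intersecting the degree-$n$ piece trivially in a suitable sense. The key structural fact, already established in \cite{DLM}, is that $L_n(U)$ carries an $\mathbb{N}$-grading $L_n(U) = \coprod_{k\geq 0} L_n(U)(k)$ in which the degree-$\leq n$ part is controlled by $U$; in particular there is a natural map $U \to \Omega_n(L_n(U))$ landing in the ``bottom'' of the module. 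The first claim, that $L_n(U)(0) \neq 0$ when $U$ does not factor through $A_{n-1}(V)$, I would prove by contradiction: if $L_n(U)(0) = 0$, then the grading starts at some degree $d \geq 1$, and by the general theory (shifting the grading, or invoking the surjection $A_n(V) \twoheadrightarrow A_{n-1}(V)$ of \eqref{surjection}) the $A_n(V)$-action on the relevant piece would factor through $A_{n-1}(V)$, forcing $U$ itself to factor through $A_{n-1}(V)$ since $U$ sits inside $L_n(U)$ via the canonical map; this contradicts the hypothesis. One has to be slightly careful that the canonical map $U \to L_n(U)$ is injective here — this is part of what \cite{DLM}/\cite{BVY} set up, and the non-factoring hypothesis is exactly what guarantees it.

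For the second assertion, under the stronger hypothesis that no nonzero submodule of $U$ factors through $A_{n-1}(V)$, I would show the canonical map $\iota: U \to \Omega_n/\Omega_{n-1}(L_n(U))$ is an isomorphism of $A_n(V)$-modules. That it is a well-defined $A_n(V)$-module homomorphism is essentially formal from the construction: $U$ maps into $\Omega_n(L_n(U))$ by design, and the composite $U \to \Omega_n(L_n(U)) \to \Omega_n/\Omega_{n-1}(L_n(U))$ respects the zero-mode action $o(-)$, which is precisely the $A_n(V)$-action on both sides. Surjectivity should follow from the fact that $L_n(U)$ is generated as a $V$-module by (the image of) $U$ together with a degree argument showing $\Omega_n(L_n(U))$ is contained in the $V$-submodule generated by $U$ up to the $\Omega_{n-1}$ part; here the description \eqref{Omega} of $\Omega_n$ and the grading shift $v_m W(k) \subset W(k + \mathrm{wt}\,v - m - 1)$ do the bookkeeping. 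Injectivity is where the stronger hypothesis is used: the kernel of $\iota$ would be a submodule of $U$ all of whose elements, after being pushed into $L_n(U)$, land in $\Omega_{n-1}(L_n(U))$, and one shows that such a submodule must have its $A_n(V)$-action factoring through $A_{n-1}(V)$ (because $\Omega_{n-1}$ is by definition the part on which the level $n-1$ structure suffices), whence it is zero.

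I expect the main obstacle to be injectivity of $\iota$, i.e.\ ruling out a nonzero kernel. The subtlety is that ``lands in $\Omega_{n-1}(L_n(U))$'' must be translated into ``factors through $A_{n-1}(V)$'' in a way that is genuinely reversible — this requires knowing that the $A_{n-1}(V)$-module structure on $\Omega_{n-1}$ of an induced module is compatible with the $A_n(V)$-picture via the surjection \eqref{surjection}, and that the functor $L_n$ is ``efficient'' enough that no spurious collapsing happens beyond what $\Omega_{n-1}$ records. All of this is the content of the machinery in \cite{DLM} and its corrections in \cite{BVY}, so in practice the proof amounts to assembling those results carefully; the non-trivial input is the added necessary condition identified in \cite{BVY} (the role of $O^L(V)$), which is what makes the zero-mode action on $L_n(U)$ well-defined and hence makes $\iota$ a morphism at all. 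I would also flag that one should double-check the edge case $n = 0$, where $\Omega_{-1}$ is interpreted as $0$ and the statement reduces to the Frenkel--Zhu-type bijection, serving as a consistency check on the argument.
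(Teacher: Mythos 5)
First, a point of comparison: this paper does not prove Theorem \ref{mainthm} at all --- it is quoted verbatim from \cite{BVY} --- so there is no internal argument to measure your proposal against. Judged on its own terms, your proposal does identify the strategy that \cite{BVY} (building on \cite{DLM}) actually uses: construct $L_n(U)$ as a quotient of a generalized Verma-type module by the maximal graded submodule meeting $U$ trivially, argue $L_n(U)(0)\neq 0$ by showing that otherwise the grading could be shifted so that the module is induced from level $n-1$ (contradicting the non-factoring hypothesis), and then analyze the canonical map $\iota: U \to \Omega_n/\Omega_{n-1}(L_n(U))$.

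However, as written this is a plan rather than a proof: every substantive step is deferred to ``the machinery in \cite{DLM} and its corrections in \cite{BVY}'', which is circular if the goal is to establish the theorem rather than cite it. Concretely: (i) you never verify that the canonical map $U \to L_n(U)$ is injective --- this is precisely where the original treatment in \cite{DLM} had a gap, and it is one of the things \cite{BVY} had to repair (via the added condition involving $O^L(V)$ that you mention only in passing), so it cannot be waved through; (ii) surjectivity of $\iota$ requires an explicit computation with the spanning set of $L_n(U)$ and the grading shift $v_m W(k) \subset W(k+\mathrm{wt}\,v-m-1)$, not just an assertion that ``the bookkeeping does it''; (iii) your injectivity argument asserts that a submodule of $U$ landing in $\Omega_{n-1}(L_n(U))$ ``must have its $A_n(V)$-action factoring through $A_{n-1}(V)$'' --- this implication is the crux of the entire theorem and the reason the hypothesis is phrased as it is, and you give no argument for it beyond restating it. Finally, note that $U$ is identified with a subspace of the degree-$n$ piece $L_n(U)(n)$, not with the ``bottom'' of the module; the assertion $L_n(U)(0)\neq 0$ concerns a graded piece that $U$ does not occupy, so the contradiction argument for the first claim needs to be run on the grading of the whole induced module, not on the image of $U$. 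A complete proof would have to reproduce the construction of $M_n(U)$ and the relevant lemmas of \cite{BVY}; none of that is present here.
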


One of the main reasons we are interested in Theorem \ref{mainthm} is what it implies about when modules for the higher level Zhu algebras give rise to indecomposable nonsimple modules for $V$ not detected by the lower level Zhu algebras, {cf. \cite{BVY}, \cite{BVY-Heisenberg}, \cite{BVY-Virasoro}. A major motivation for the present paper is to provide methods for construction of those higher level Zhu algebras which can be used to induce new indecomposable nonsimple modules.

\section{Some general results for generators and relations of the algebra $A_n(V)$}\label{general-n-section}

In this section we prove the main results of the paper regarding generators and relations 
 that occur in $A_n(V)$ in certain settings.  In Section \ref{recursion-section}, we prove a general relation that holds for all elements in a vertex operator algebra, as well as a recursion formula that holds for ``most" elements in a vertex operator algebra.  In Section \ref{generators-section} we use this recursion formula to identify sets of generators for $A_n(V)$ if $V$ is a strongly generated cyclic vertex operator algebra, such as the Heisenberg or Virasoro vertex operator algebras, but this also applies for broad classes of elements in a more general vertex operator algebra.  

Recall the notation (\ref{define-O-subsets}) and (\ref{define-O}).  To distinguish between relations in $A_n(V)$ arising from $O^L(V)$ versus $O^\circ_n(V)$  in our results below, we use the notation $u \approx v$ if $u \equiv v \, \mathrm{mod} \,O^L(V)$.  And we will write $u \sim_n v $ if  $u \equiv v  \, \mathrm{mod} \, O_n^\circ(V)$.  More broadly if $u$ is equivalent to $w$ modulo $O_n(V)$, we will write $u \equiv_n w$.  

 \subsection{General reduction relations in $A_n(V)$ arising from $O^L(V)$ and $O_n^\circ(V)$}\label{recursion-section}
 
 We have the following reduction relation in $A_n(V)$ arising from $O^L(V)$.
 
 \begin{lem}\label{L-reduction-lemma}
 Let  $V$ be a vertex operator algebra, $r \in \mathbb{N}$, $u^{(1)}, \dots, u^{(r)} \in V$, and $ k_1, \dots, k_r \in \mathbb{Z}$.  For $w = u_{k_1}^{(1)} u_{k_2}^{(2)} \cdots u_{k_r}^{(r)} \mathbf{1} \in V$, we have 
\begin{equation}
 (\mathrm{wt} \,  w )u_{k_1}^{(1)} u_{k_2}^{(2)} \cdots u_{k_r}^{(r)} \mathbf{1} \approx
 \sum_{j = 1}^r  k_j  u_{k_1}^{(1)} u_{k_2}^{(2)} \cdots u_{k_{j-1}}^{(j-1)} u_{k_j - 1}^{(j)} u_{k_{j+1}}^{(j+1)} \cdots u_{k_r}^{(r)} \mathbf{1} .
\end{equation}
In particular, for $u,v \in V$ and $j,k \in \mathbb{Z}$, $j \neq 0$, we have
\begin{equation}\label{L-reduction-special-case} 
u_{-j-1} v_{-k} {\bf 1} \approx - \left( \frac{\mathrm{wt} \, u  + \mathrm{wt} \, v  +j  +k  -2}{j} \right) u_{-j} v_{-k} {\bf 1}  - \frac{k}{j} u_{-j} v_{-k-1} {\bf 1} .
\end{equation}
 \end{lem}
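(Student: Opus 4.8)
The plan is to read the statement off directly from the definition of $O^L(V)$. Since each $u^{(i)}$ is homogeneous, the vector $w = u^{(1)}_{k_1} u^{(2)}_{k_2} \cdots u^{(r)}_{k_r}\mathbf{1}$ is homogeneous of weight $\mathrm{wt}\, w = \sum_{j=1}^r(\mathrm{wt}\, u^{(j)} - k_j - 1)$ (using $\mathrm{wt}\,\mathbf{1} = 0$), so $L(0)w = (\mathrm{wt}\, w)w$ and therefore $(L(-1) + L(0))w \in O^L(V)$. Hence $(\mathrm{wt}\, w)\, w \approx -L(-1)w$, and the first assertion reduces entirely to computing $L(-1)w$ explicitly.

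For that computation I would invoke the $L(-1)$-derivative property in its ``mode'' form, namely the commutator $[L(-1), a_k] = -k\, a_{k-1}$ for all $a \in V$ and $k \in \mathbb{Z}$ (a consequence of $Y(L(-1)a,x) = \frac{d}{dx}Y(a,x)$), together with $L(-1)\mathbf{1} = 0$. Moving $L(-1)$ to the right past the modes $u^{(1)}_{k_1}, \dots, u^{(r)}_{k_r}$ one at a time, the terminal $L(-1)\mathbf{1}$ term vanishes and the $j$-th commutator contributes exactly $-k_j\, u^{(1)}_{k_1}\cdots u^{(j-1)}_{k_{j-1}} u^{(j)}_{k_j - 1} u^{(j+1)}_{k_{j+1}}\cdots u^{(r)}_{k_r}\mathbf{1}$. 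Summing gives $L(-1)w = -\sum_{j=1}^r k_j\, u^{(1)}_{k_1}\cdots u^{(j)}_{k_j - 1}\cdots u^{(r)}_{k_r}\mathbf{1}$, and substituting into $(\mathrm{wt}\, w)w \approx -L(-1)w$ yields the displayed identity.

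For the special case \eqref{L-reduction-special-case}, the key move is to apply the general formula not to $u_{-j-1}v_{-k}\mathbf{1}$ itself but to $w' = u_{-j}v_{-k}\mathbf{1}$, which is homogeneous of weight $\mathrm{wt}\, u + \mathrm{wt}\, v + j + k - 2$. The general formula then gives $(\mathrm{wt}\, u + \mathrm{wt}\, v + j + k - 2)\, u_{-j}v_{-k}\mathbf{1} \approx -j\, u_{-j-1}v_{-k}\mathbf{1} - k\, u_{-j}v_{-k-1}\mathbf{1}$; since $j \neq 0$ I can divide by $-j$ and rearrange to isolate $u_{-j-1}v_{-k}\mathbf{1}$, obtaining \eqref{L-reduction-special-case}.

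There is no genuine obstacle in this argument; the only points that require a little care are verifying that iterated products of modes of homogeneous vectors are homogeneous with the asserted weight (so that $L(0)$ really acts by the scalar $\mathrm{wt}\, w$), and keeping the signs straight when commuting $L(-1)$ through the string of modes. If one wishes the statement for non-homogeneous $u^{(i)}$, one simply extends by linearity in each argument, so there is no loss of generality in assuming homogeneity.
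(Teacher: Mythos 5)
Your proposal is correct and follows essentially the same route as the paper: both derive the identity by applying $(L(-1)+L(0))w \approx 0$ and computing $L(-1)w$ via the commutator $[L(-1),a_k]=-k\,a_{k-1}$, and both obtain the special case by applying the general formula to $u_{-j}v_{-k}\mathbf{1}$ (i.e., $r=2$, $k_1=-j$, $k_2=-k$) and dividing by $-j$. No substantive differences.
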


\begin{proof} From the $L(-1)$-derivative property and the $L(-1)$-bracket derivative property for $V$, we have that for $k \in \mathbb{Z}$ and $v \in V$.
\begin{equation}\label{L(-1)-properties}
\left[ L(-1), v_{k} \right] = (L(-1) v)_{k} = -k v_{k-1}.
\end{equation}
Thus we have for $w = u_{k_1}^{(1)} u_{k_2}^{(2)} \cdots u_{k_r}^{(r)} \mathbf{1}$ 
\begin{eqnarray*}
0 &\approx& (L(-1) + L(0))  u_{k_1}^{(1)} u_{k_2}^{(2)} \cdots u_{k_r}^{(r)} \mathbf{1}\\
&=& (\mathrm{wt} \,  w )u_{k_1}^{(1)} u_{k_2}^{(2)} \cdots u_{k_r}^{(r)} \mathbf{1}
- k_1  u_{k_1 -1}^{(1)} u_{k_2}^{(2)} \cdots u_{k_r}^{(r)} \mathbf{1} \\
& & \quad - \,  k_2 u_{k_1}^{(1)} u_{k_2-1}^{(2)} \cdots u_{k_r}^{(r)} \mathbf{1} - \cdots  -k_r u_{k_1}^{(1)} u_{k_2}^{(2)} \cdots u_{k_r-1}^{(r)} \mathbf{1} \\
&=& (\mathrm{wt} \,  w )u_{k_1}^{(1)} u_{k_2}^{(2)} \cdots u_{k_r}^{(r)} \mathbf{1}
- \sum_{j = 1}^r  k_j  u_{k_1}^{(1)} u_{k_2}^{(2)} \cdots u_{k_{j-1}}^{(j-1)} u_{k_j - 1}^{(j)} u_{k_{j+1}}^{(j+1)} \cdots u_{k_r}^{(r)} \mathbf{1} 
\end{eqnarray*}
proving the first equation.

The second equation follows from the first by letting $r = 2$, $u^{(1)} = u$, $u^{(2)} = v$, $k_1 = -j$, and $k_2 = -k$.  
\end{proof}

For certain elements of $V$, we have the following  relations in $A_n(V)$ arising from a recursion in $O_n^\circ(V)$.

\begin{prop}\label{recursion-n}
Let $V$ be a vertex operator algebra and fix $n \in \mathbb{N}$.  

If  $u \in V$ satisfies $\mathrm{wt} \, u > -n$, then for $v \in V$ and $m \geq n+ 1$
\begin{equation}\label{recursion-n-equation}
u_{\mathrm{wt} \, u - m - 1}v \ \sim_n  \ (-1)^{m+ \mathrm{wt} \, u} \sum_{j = 1}^{n+ \mathrm{wt} \, u} \binom{m-n-1}{j-1} \binom{m-n-j-1}{n+\mathrm{wt} \, u -j } u_{\mathrm{wt} \, u - n - j - 1} v
\end{equation}
where $\sim_n$ denotes equivalence modulo $O^\circ_n(V)$.  In particular, this gives $u_{\mathrm{wt} \, u - m - 1} v \ \sim_n \ u_{\mathrm{wt} \, u - m - 1} v$, and thus no new relations in $A_n(V)$ for $n+1 \leq m \leq 2n + \mathrm{wt} \, u$, but does give additional relations in $A_n(V)$ arising from $O_n^\circ(V)$ for $m > 2n + \mathrm{wt} \, u$.

If  $u \in V$ satisfies $\mathrm{wt} \, u = -n$,  then for $v \in V$, and $m \geq n+ 1$, 
\begin{equation}\label{recursion-n-equation-lowest}
u_{\mathrm{wt} \, u - m -1}v  \sim_n 0.
\end{equation}
\end{prop}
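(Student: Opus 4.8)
The plan is to extract the recursion from Lemma~\ref{Zhu-lemma} by choosing the free parameters $m$ and $k$ in expression \eqref{more-elements} so as to isolate a single ``new'' mode $u_{\mathrm{wt}\,u-m-1}v$ in terms of a controlled range of modes. First I would unpack the residue $\res_x \frac{(1+x)^{\mathrm{wt}\,u+n}Y(u,x)v}{x^{N}}$: writing $Y(u,x)v=\sum_i u_i v\, x^{-i-1}$ and expanding $(1+x)^{\mathrm{wt}\,u+n}=\sum_p \binom{\mathrm{wt}\,u+n}{p}x^p$, one gets $\res_x \frac{(1+x)^{\mathrm{wt}\,u+n}Y(u,x)v}{x^{N}}=\sum_{p\ge 0}\binom{\mathrm{wt}\,u+n}{p}u_{p-N}v$. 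Note that when $\mathrm{wt}\,u>-n$, the binomial coefficient $\binom{\mathrm{wt}\,u+n}{p}$ is (generically) nonzero for $0\le p\le \mathrm{wt}\,u+n$ and vanishes beyond, so these residues are genuinely finite combinations of modes $u_{-N}v,\dots,u_{\mathrm{wt}\,u+n-N}v$; when $\mathrm{wt}\,u=-n$ only the $p=0$ term survives, giving exactly $u_{-N}v$, which will be the source of the degenerate statement \eqref{recursion-n-equation-lowest}.

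For the main formula \eqref{recursion-n-equation}, I would take in Lemma~\ref{Zhu-lemma} the value $k=0$ together with a suitable $m$ (in the lemma's notation) so that the residue \eqref{more-elements} with exponent $x^{m+2n+2}$ picks out, as its extreme term, precisely $u_{\mathrm{wt}\,u-m-1}v$ for the desired external $m$; concretely one matches $p-N=\mathrm{wt}\,u-m-1$ at $p=\mathrm{wt}\,u+n$, i.e.\ $N=m+n+1$, which for $m\ge n+1$ lies in the range $N\ge 2n+2$ covered by the lemma (after shifting indices appropriately — this is where the condition $m\ge n+1$ enters). Since the whole residue is $\sim_n 0$, solving for the extreme term $u_{\mathrm{wt}\,u-m-1}v$ expresses it as a linear combination of the lower modes $u_{\mathrm{wt}\,u-n-j-1}v$, $j=1,\dots,n+\mathrm{wt}\,u$, with coefficients that are ratios of binomial coefficients. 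The remaining work is to verify that these coefficients collapse to the closed form $(-1)^{m+\mathrm{wt}\,u}\binom{m-n-1}{j-1}\binom{m-n-j-1}{n+\mathrm{wt}\,u-j}$; I would do this by induction on $m$ (base case $m=n+1$ checked directly, inductive step using the lemma to rewrite $u_{\mathrm{wt}\,u-m-1}v$ via $u_{\mathrm{wt}\,u-m}v$ and lower terms, then applying the Vandermonde/Chu identity to merge the two binomial sums), or alternatively by a generating-function computation with $(1+x)^{-(\mathrm{wt}\,u+n)}$, whichever is cleaner.

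For the case $\mathrm{wt}\,u=-n$, the argument is shorter: as noted, every residue in $O^\circ_n(V)$ built from such a $u$ reduces to a single mode, so Lemma~\ref{Zhu-lemma} directly gives $u_{-N}v\sim_n 0$ for all $N\ge 2n+2$, and since $\mathrm{wt}\,u-m-1=-n-m-1\le -2n-2$ exactly when $m\ge n+1$, this yields \eqref{recursion-n-equation-lowest} immediately; alternatively one observes \eqref{recursion-n-equation} still holds formally but the sum $\sum_{j=1}^{n+\mathrm{wt}\,u}=\sum_{j=1}^{0}$ is empty.

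I expect the main obstacle to be the bookkeeping in the inductive step: reconciling the raw coefficients that come out of iterating Lemma~\ref{Zhu-lemma} (which naturally produce nested sums over the expansion of $(1+x)^{\mathrm{wt}\,u+n}$) with the advertised product of two binomials requires a careful binomial identity, and keeping the index shifts between the lemma's internal $m$ and the proposition's external $m$ straight is the place where sign and off-by-one errors are most likely. A secondary subtlety is checking that the stated ``no new relations for $n+1\le m\le 2n+\mathrm{wt}\,u$'' claim really does follow — i.e.\ that for $m$ in that range the closed formula, read backwards, is a tautology $u_{\mathrm{wt}\,u-m-1}v\sim_n u_{\mathrm{wt}\,u-m-1}v$ rather than a nontrivial relation — which amounts to noting that the right-hand side then has a single surviving term equal to the left-hand side.
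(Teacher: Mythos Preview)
Your overall plan --- use Lemma~\ref{Zhu-lemma} with $k=0$, isolate one mode from the residue relation, induct on $m$, and close the induction with a binomial identity --- is exactly the paper's approach, and your treatment of the tautological range and of the case $\mathrm{wt}\,u=-n$ is correct. There is, however, a concrete error in your choice of which extreme to isolate, and it matters.

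With your choice $N=m+n+1$, the mode $u_{\mathrm{wt}\,u-m-1}v$ sits at $p=\mathrm{wt}\,u+n$, the \emph{least} negative index appearing in the relation $\sum_{p}\binom{\mathrm{wt}\,u+n}{p}u_{p-N}v\sim_n 0$. Solving for it therefore expresses it in terms of $u_{\mathrm{wt}\,u-m-2}v,\dots,u_{-(m+n+1)}v$, all of which are \emph{deeper} (more negative index) than the mode you started with --- not the fixed target modes $u_{\mathrm{wt}\,u-n-j-1}v$ with $1\le j\le n+\mathrm{wt}\,u$. An induction on $m$ built on this runs the wrong way and never lands in the target range. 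The correct move, and what the paper does, is the opposite: take $N=m-\mathrm{wt}\,u+1$ so that $u_{\mathrm{wt}\,u-m-1}v$ appears at $p=0$ (also with coefficient $1$). The relation then expresses it via $u_{\mathrm{wt}\,u-(m-i)-1}v$ for $i=1,\dots,n+\mathrm{wt}\,u$, and since $m-i<m$ the inductive hypothesis applies. This choice of $N$ requires $N\ge 2n+2$, i.e.\ $m>2n+\mathrm{wt}\,u$, which dovetails with the tautological range: the base of the induction is the whole block $n+1\le m\le 2n+\mathrm{wt}\,u$, not just $m=n+1$.

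One further caution: the identity that actually closes the induction,
\[
\sum_{i=0}^{n+\mathrm{wt}\,u}(-1)^{i+1}\binom{n+\mathrm{wt}\,u}{i}\binom{m-i-n-1}{j-1}\binom{m-i-n-j-1}{n+\mathrm{wt}\,u-j}=0,
\]
does not fall out of a single Vandermonde/Chu application; the paper relegates its proof to an appendix and the argument there is not entirely short. Budget accordingly.
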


\begin{proof}
For $\mathrm{wt} \, u > -n$, and  $n+1 \leq m \leq 2n + \mathrm{wt} \, u$, if $j \neq m - n$ on the righthand side of Eqn.\ (\ref{recursion-n-equation}) then one of the two binomial coefficients for that term in the sum is zero.  Whereas if $j = m-n$, the coefficient of that term is one.  Thus for $n+1 \leq m \leq 2n + \mathrm{wt} \, u$, the righthand side of this equation reduces to $u_{\mathrm{wt}\, u - m - 1}v$, giving a trivial relation in $A_n (V)$ in this case, as stated.

By (\ref{more-elements}) with $k = 0$, and letting $m = l \geq 0$ in Eqn.\ (\ref{more-elements}), we have 
\begin{eqnarray*}
0 &\sim_n& \mathrm{Res}_x x^{-l - 2n-2} (1+x)^{\mathrm{wt} \, u + n} \sum_{ j \in \mathbb{Z}} u_j x^{-j-1} v \\
&=& \mathrm{Res}_x \sum_{ j \in \mathbb{Z}} \sum_{ i \in \mathbb{N}} \binom{\mathrm{wt} \, u + n}{i} x^{i-l-2n-j-3} u_j v \\
&=& \sum_{i \in \mathbb{N}}  \binom{\mathrm{wt} \, u + n}{i} u_{i- l - 2n - 2}v.
\end{eqnarray*}
Thus for $\mathrm{wt} \, u + n > 0$ and $l \geq 0$, we have 
\begin{equation}\label{for-recursion-proof}
u_{-l-2n-2}v \ \sim_n \ -  \sum_{i = 1}^{n + \mathrm{wt} \, u}  \binom{n + \mathrm{wt} \, u}{i} u_{i - l - 2n - 2}v,
\end{equation}
and for $\mathrm{wt} \, u = -n$ and $l\geq 0$, we have $u_{-l-2n-2} v \sim_n 0$.  This finishes the proof of the last statement of the proposition, i.e. the case when $\mathrm{wt} \, u = -n$.

We are left with proving Eqn.\ (\ref{recursion-n-equation}) holds when $\mathrm{wt} \, u > -n$ and $m > 2n + \mathrm{wt} u$.   Since the equation holds for $m = 2n + \mathrm{wt} \, u$, we make the inductive assumption that Eqn.\ (\ref{recursion-n-equation}) holds for $u_{\mathrm{wt} \, u -m' - 1}$ for $2n+ \mathrm{wt} \, u \leq m' < m$.  Then by Eqn.\ (\ref{for-recursion-proof}) with $l = m - \mathrm{wt} \, u - 2n - 1$ (which is greater than or equal to zero when $m>2n+\mathrm{wt}\, u$), and the inductive assumption, as well as the fact that we already have shown that Eqn.\ (\ref{recursion-n-equation}) holds for $u_{\mathrm{wt} \, u -m' - 1}$ when $n+ 1 \leq m' \leq 2n+ \mathrm{wt} \, u$, we have 
\begin{eqnarray*}
\lefteqn{u_{\mathrm{wt} \, u - m - 1}v \ = u_{-l - 2n  - 2}v }\\
& \sim_n & -  \sum_{i= 1}^{n+ \mathrm{wt} \, u}  \binom{n + \mathrm{wt} \, u}{i} u_{i- l - 2n - 2}v\\
&=& -  \sum_{i = 1}^{n + \mathrm{wt} \, u}  \binom{n + \mathrm{wt} \, u}{i} u_{\mathrm{wt} \, u - (m-i) - 1}v \\
&\sim_n& -  \sum_{i = 1}^{n + \mathrm{wt} \, u}  \binom{n + \mathrm{wt} \, u}{i} 
(-1)^{m-i+\mathrm{wt} \, u} \sum_{j= 1}^{n+ \mathrm{wt} \, u} \binom{m-i-n-1}{j-1} \binom{m-i-n-j-1}{n+\mathrm{wt} \, u - j} \\
& & \qquad u_{\mathrm{wt} \, u - n - j - 1} v\\
&=&  (-1)^{m+\mathrm{wt} \, u} \sum_{j= 1}^{n+ \mathrm{wt} \, u}  \ \sum_{i = 1}^{ n + \mathrm{wt} \, u }  (-1)^{i+1} \binom{n + \mathrm{wt} \, u }{i} 
\binom{m-i-n-1}{j-1} \binom{m-i-n-j-1}{n+\mathrm{wt} \, u - j} \\
& & \qquad u_{\mathrm{wt} \, u - n - j - 1} v.
\end{eqnarray*}
Thus the result holds if we can show that for $j = 1, 2, \dots, n + \mathrm{wt} \, u$, and $m > 2 n + \mathrm{wt} \, u$
\begin{multline*}
 \sum_{i = 1}^{ n + \mathrm{wt} \, u }  (-1)^{i+1} \binom{n + \mathrm{wt} \, u}{i} 
\binom{m-i-n-1}{j-1} \binom{m-i-n-j-1}{n+\mathrm{wt} \, u - j }  \\
= \binom{m-n-1}{j-1} \binom{m-n-j-1}{n+\mathrm{wt} \, u - j } 
\end{multline*}
or equivalently that for $j = 1, \dots, n + k$, with $k  = \mathrm{wt} \, u$, and  $m > 2n + k$ that 
\begin{equation}\label{combinatorial-formula}
 \sum_{i = 0}^{ n + k}  (-1)^{i+1} \binom{ n + k }{i} 
\binom{m-i-n-1}{j-1} \binom{m-i-n-j-1}{n+ k-j}  =0.
\end{equation}

Since the proof of this last equality is quite long, and only a combinatorial identity, we leave it for the appendix; see Proposition \ref{recursion-n-identity}.
\end{proof}

The recursion (\ref{recursion-n-equation}) can be used to determine certain elements in $O_n(V)$ that are useful in determining the generators and relations for $A_n(V)$.  For instance, 
letting $Y^+(v,x) \in (\mathrm{End}(V))[[x]]$ and $Y^-(v,x)\in x^{-1}(\mathrm{End}(V))[[x^{-1}]]$ denote the regular and singular parts, respectively, of $Y(v,x)$ for $v \in V$, one can use the recursion to investigate which elements are of the form $u \circ_n v$ for $u,v \in V$, and thus lie in $O_n(V)$.  To that end, we note the following lemma.

\begin{lem}\label{Y+-lemma} Let $V$ be a vertex operator algebra and fix $n \in \mathbb{N}$.  If $u \in V$ satisfies wt $u > -n$, then for $v \in V$, we have 
\begin{multline}\label{Y+}
Y^+(u,x)v \sim_n 
 \sum_{k=1}^{n - \mathrm{wt} \, u + 1}  u_{-k} v x^{k-1} \\
 +   \sum_{m\ge n+1} \sum_{k = n - \mathrm{wt} \, u + 2}^{2n + 1} (-1)^{m- \mathrm{wt} \, u}  \binom{m-n-1}{k-n + \mathrm{wt} \, u -2} \binom{m-k-\mathrm{wt} \, u}{2n-k + 1 } u_{-k} v x^{-\mathrm{wt} \, u  + m}  ,
\end{multline}
and if $\mathrm{wt} \,  u = -n$, then
\begin{equation}\label{Y+-neg}
Y^+(u,x)v \sim_n 
 \sum_{k=1}^{2n + 1}  u_{-k} v x^{k-1} .
 \end{equation}
\end{lem}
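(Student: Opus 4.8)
The plan is to compute $Y^+(u,x)v$ directly by extracting the nonnegative powers of $x$ from $Y(u,x)v = \sum_{j\in\mathbb{Z}} u_j v\, x^{-j-1}$, i.e. $Y^+(u,x)v = \sum_{j \le -1} u_j v\, x^{-j-1} = \sum_{k \ge 1} u_{-k}v\, x^{k-1}$, and then to reduce each coefficient $u_{-k}v$ modulo $O_n^\circ(V)$ using Proposition \ref{recursion-n}. The key observation is a change of variables: in the notation of Proposition \ref{recursion-n}, $u_{\mathrm{wt}\, u - m - 1}v$ corresponds to setting $-k = \mathrm{wt}\, u - m - 1$, i.e. $m = k + \mathrm{wt}\, u - 1$ and correspondingly the power of $x$ is $x^{k-1} = x^{-\mathrm{wt}\, u + m}$. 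The recursion (\ref{recursion-n-equation}) applies precisely when $m \ge n+1$, that is $k \ge n - \mathrm{wt}\, u + 2$; for $1 \le k \le n - \mathrm{wt}\, u + 1$ (equivalently $m \le n$) the modes $u_{-k}v$ are left untouched, which produces the first sum $\sum_{k=1}^{n - \mathrm{wt}\, u + 1} u_{-k} v\, x^{k-1}$ in (\ref{Y+}).

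Next I would handle the range $k \ge n - \mathrm{wt}\, u + 2$. For these $k$, Proposition \ref{recursion-n} rewrites $u_{-k}v = u_{\mathrm{wt}\, u - m - 1}v$ (with $m = k + \mathrm{wt}\, u - 1 \ge n+1$) as
\[
(-1)^{m + \mathrm{wt}\, u}\sum_{j=1}^{n+\mathrm{wt}\, u}\binom{m-n-1}{j-1}\binom{m-n-j-1}{n+\mathrm{wt}\, u - j}\, u_{\mathrm{wt}\, u - n - j - 1}v .
\]
The target mode $u_{\mathrm{wt}\, u - n - j - 1}v$ is $u_{-k'}v$ with $k' = n - \mathrm{wt}\, u + j + 1$, so as $j$ runs over $1,\dots,n+\mathrm{wt}\, u$, $k'$ runs over $n - \mathrm{wt}\, u + 2,\dots,2n+1$; substituting $j = k' - n + \mathrm{wt}\, u - 1$ and $m = k + \mathrm{wt}\, u - 1$ into the two binomial coefficients gives $\binom{m-n-1}{j-1} = \binom{k+\mathrm{wt}\, u - n - 2}{k' - n + \mathrm{wt}\, u - 2}$ and $\binom{m-n-j-1}{n+\mathrm{wt}\, u - j} = \binom{k - k'}{2n - k' + 1}$, and the sign $(-1)^{m+\mathrm{wt}\, u} = (-1)^{k + 2\mathrm{wt}\, u - 1} = (-1)^{k-1}$. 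Then I would reassemble $Y^+(u,x)v$: the contribution of the reduced modes is $\sum_{k \ge n-\mathrm{wt}\, u + 2} x^{k-1} \cdot (\text{above expression})$, and after swapping the order of summation so that $k'$ (the surviving mode index) is outer and $k$ is inner, and relabeling the inner summation variable back to $m$ via $m = k + \mathrm{wt}\, u - 1$ (so $x^{k-1} = x^{-\mathrm{wt}\, u + m}$, $m \ge n+1$) and $k'$ back to $k$, one should land exactly on the stated double sum in (\ref{Y+}), with $\binom{m-n-1}{k-n+\mathrm{wt}\, u - 2}\binom{m-k-\mathrm{wt}\, u}{2n-k+1}$ and sign $(-1)^{m-\mathrm{wt}\, u}$ (noting $(-1)^{k-1} = (-1)^{m-\mathrm{wt}\, u}$ since $k = m - \mathrm{wt}\, u + 1$). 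For the case $\mathrm{wt}\, u = -n$, equation (\ref{recursion-n-equation-lowest}) says $u_{-k}v \sim_n 0$ for all $k$ with $m = k + \mathrm{wt}\, u - 1 = k - n - 1 \ge n+1$, i.e. $k \ge 2n+2$, so only the modes $u_{-k}v$ with $1 \le k \le 2n+1$ survive, giving (\ref{Y+-neg}) immediately.

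The main obstacle I anticipate is purely bookkeeping: carefully tracking the several index substitutions ($j \leftrightarrow k'$, $m \leftrightarrow k$), the interchange of the double summation, and the sign $(-1)^{m+\mathrm{wt}\, u}$ versus $(-1)^{m-\mathrm{wt}\, u}$ versus $(-1)^{k-1}$ (all equal modulo $2$ since $2\mathrm{wt}\, u$ is even, assuming integral weights — one should remark that for half-integral or complex weights the statement is read with the displayed signs, which is what the recursion produces). No genuinely new combinatorial identity is needed here, since Proposition \ref{recursion-n} has already absorbed the hard identity (\ref{combinatorial-formula}) into its statement; this lemma is just a careful transcription of that recursion into generating-function form. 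One subtlety worth checking is the upper range of $k$ in the first sum versus the lower range in the second: they must meet at $k = n - \mathrm{wt}\, u + 2$ with no overlap or gap, which is consistent with the observation in Proposition \ref{recursion-n} that the recursion is trivial for $n+1 \le m \le 2n + \mathrm{wt}\, u$ — I would use this triviality to confirm that extending the reduced sum down to include those "trivial" $m$ values does no harm, which is in fact what makes the clean double-sum formula valid for all $m \ge n+1$ rather than only $m > 2n + \mathrm{wt}\, u$.
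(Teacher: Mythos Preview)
Your proposal is correct and follows essentially the same approach as the paper's proof: write $Y^+(u,x)v = \sum_{k\ge 1} u_{-k}v\,x^{k-1}$, reindex via $m = k + \mathrm{wt}\,u - 1$, split at $m = n$, and apply Proposition~\ref{recursion-n} to the tail before relabeling $j$ as the new $k$. One minor point: the ``swapping'' of summation order you describe is not actually needed (and in the paper's displayed formula the sum over $m$ is outer, not inner), but since the two sums range independently this is harmless bookkeeping and does not affect correctness.
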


\begin{proof}
Applying the recursion given in Eqn.\ (\ref{recursion-n-equation}), we have that if $\mathrm{wt} \, u > -n$, then
\begin{eqnarray*}
\lefteqn{Y^+(u, x) v }\\
&=& \sum_{k\ge1}u_{-k}vx^{k-1} =  \sum_{m\ge \mathrm{wt} \, u}u_{\mathrm{wt} \, u - m - 1}vx^{-\mathrm{wt} \, u  + m} \\
&=&   \sum_{m=\mathrm{wt} \, u}^n u_{\mathrm{wt} \, u - m - 1}vx^{-\mathrm{wt} \, u  + m} +   \sum_{m\ge n+1} u_{\mathrm{wt} \, u - m - 1}vx^{-\mathrm{wt} \, u  + m } \\
&\sim_n&
  \sum_{m=\mathrm{wt} \, u}^n u_{\mathrm{wt} \, u - m - 1} v x^{-\mathrm{wt} \, u  + m} \\
  & & \quad + \sum_{m\ge n+1} (-1)^{m+ \mathrm{wt} \, u} \sum_{j = 1}^{n+ \mathrm{wt} \, u} \binom{m-n-1}{j-1} \binom{m-n-j-1}{n+\mathrm{wt} \, u -j } u_{\mathrm{wt} \, u - n - j - 1} v x^{-\mathrm{wt} \, u  + m}  \\
&\sim_n&
 \sum_{k=1}^{n - \mathrm{wt} \, u + 1}  u_{-k} v x^{k-1} \\
  & & \quad +   \sum_{m\ge n+1} \sum_{k = n - \mathrm{wt} \, u + 2}^{2n + 1} (-1)^{m- \mathrm{wt} \, u}  \binom{m-n-1}{k-n + \mathrm{wt} \, u -2} \binom{m-k-\mathrm{wt} \, u}{2n-k + 1 } u_{-k} v x^{-\mathrm{wt} \, u  + m}  ,
\end{eqnarray*}
giving Eqn.\ (\ref{Y+}).  Eqn.\ (\ref{Y+-neg}) follows easily from the first two equalities in the calculation above and Eqn.\ (\ref{recursion-n-equation-lowest}).
\end{proof}

\begin{rem}{\em In practice, as in \cite{AB-details-of-Heisenberg}, one can often find a closed form for (\ref{Y+}) in terms of rational functions in $x$ which allows for ease of calculations.  Then using the $L(-1)$-derivative property, one can obtain similar expressions of equivalence modulo $O_n^\circ(V)$ for $Y^+ (u_{-k} \mathbf{1}, x)$,  in particular for $1 \leq k \leq 2n+1$.  As we show below in Proposition \ref{general-generators-prop}, 
 in many circumstances these $u_{-k}$, for $1 \leq k \leq 2n+1$, are enough to form a generating set for elements in $A_n(V)$ generated by $u$.
 }
\end{rem}

\subsection{Generators and relations for subalgebras of $A_n(V)$ arising from certain elements of $V$ using the reduction and recursion}\label{generators-section} 

In this section, we use this recursion relation  in $A_n(V)$ given in Proposition \ref{recursion-n}  to obtain results regarding the generators for certain subalgebras of $A_n(V)$ arising from elements in $V$ with certain nice properties.  Calculating a minimal set of generators for a given higher level Zhu algebra is in general quite difficult. The results that follow give some guidance in how to do this.

Recall (e.g. \cite{LL}) that for $V$ a vertex algebra, a subset $S \subset V$ is said to generate $V$, if $V$ is spanned by elements of the form
\[ V = \mathrm{span} \{u_{k_1}^{(1)} u_{k_2}^{(2)} \cdots u_{k_r}^{(r)} \mathbf{1} \; | \;  r \in \mathbb{N}, \  u^{(1)}, \dots, u^{(r)} \in S,  \ k_1, \dots, k_r \in \mathbb{Z} \},\]
and this is denoted by $V = \langle S \rangle$.   For $V$ a nontrivial vertex algebra, we say that $V$ is {\it positively generated at $K$ by $S$} for $K \in \mathbb{Z}$, if
\[ V = \mathrm{span} \{u_{k_1}^{(1)} u_{k_2}^{(2)} \cdots u_{k_r}^{(r)} \mathbf{1} \; | \;  r \in \mathbb{N}, \  u^{(1)}, \dots, u^{(r)} \in S,  \ k_1, \dots, k_r \leq -K \},\]
and denote this by $V = \langle S \rangle^K$.    If $S$ is finite, and $V$ is positively generated at $K = 1$ by $S$, then we say that $V$ is {\it strongly generated} by $S$.

Suppose $V = \langle S \rangle^K$.  For $v \in V$ and $r \in \N$, we define $F_r(v)$ to be the subspace of $V$ linearly spanned by all elements of the form
\begin{equation}\label{general-span} 
u_{-k_1}^{(1)} u_{-k_2}^{(2)} \cdots u_{-k_m}^{(m)} v ,
\end{equation}
for $k_1, \dots, k_m \geq K$ and $m\in\mathbb{N}$  with  $m \leq r$. Thus $F_r(v) \subseteq F_{r+1}(v)$, and $F_0(v) = \mathbb{C}v$.

We say that $V$ satisfies the {\it permutation property} if for all  $m \in \mathbb{Z}_+$, for $\sigma$ any permutation on $m$ letters,  for any  $u^{(1)}, u^{(2)}, \dots, u^{(m)}, v \in V$, and for any $k_1, \dots, k_m \in \mathbb{Z}$,  then
\begin{equation}\label{general-permutation}
u_{-k_1}^{(1)}\cdots u_{-k_m}^{(m)}v - u_{-k_{\sigma(1)}}^{(\sigma(1))} \cdots u_{-k_{\sigma(m)}}^{(\sigma(m))}v \in F_{m-1}(v).
\end{equation}
If Eqn.\ \eqref{general-permutation} holds only for some particular subspaces of $V$, say for $u^{(1)}, \dots, u^{(m)} \in U \subset V$ and $v \in W \subset V$, then we say that the {\it the permutation property holds for $U$  acting on $W$}.

If $V$ is a vertex algebra, $U = \langle S \rangle^1 \subset V$,  the permutation property holds for $U$ acting on  $W \subset V$, and for all $u \in U$, $w \in W$, we have $u_jw = 0$ if $j\geq0$, then we say that {\it $U$ acts strongly on $W$.}

For the following results, we let $V$ be a vertex operator algebra and for fixed $n \in \mathbb{N}$, we suppose there exists $u \in V$, such that $V^u =  \langle u \rangle^1$ is a subvertex algebra of $V$ generated by $u$ and $V^u$ satisfies the permutation property on $W \subset V$. Note that we do not assume that $V^u$ is necessarily a subvertex operator algebra.  We introduce the notation $A_n^u$ to denote the subalgebra of $A_n(V)$ generated by elements in $V^u$.
And we use the notation $A_n^u . W$ to denote the elements in $A_n(V)$ of the form $u_{-k_1} \cdots u_{-k_m}  v + O_n(V)$ for $v \in W$, and $k_1, \dots, k_m \in \mathbb{Z}_+$.  

\begin{rem} {\em Note that examples of vertex algebras which are strongly generated as $\langle u \rangle^1$ include the rank one Heisenberg vertex operator algebra and the vertex operator algebras arising from the Virasoro algebra.  Also note that in these cases, the vertex algebra is indeed a vertex operator algebra.  In particular, for any vertex operator algebra $V$ with conformal element $\omega$, we have the subvertex operator algebra of $V$ given by $\langle \omega \rangle^1$. }
\end{rem}

\begin{prop}\label{general-generators-prop}
Let $V$ be a vertex operator algebra.  Fix $n \in \mathbb{N}$, and suppose there is an element $u \in V$ with $\mathrm{wt} \, u \geq -n$,  and such that  the subvertex algebra $V^u = \langle u \rangle^1$ satisfies the permutation property.  Let $A_n^u$ be the subalgebra of $A_n(V)$ generated by elements in $V^u$.  Then elements in $A_n^u$ are generated by elements of the form 
\[ u_{-2n - 1}^{i_{2n+1}} u_{- 2n}^{i_{2 n}} \cdots u_{ - 1}^{i_1 }\mathbf{1} + O_n(V) ,\]
for $i_1, i_2, \dots, i_{2 n+1} \in \mathbb{N}$.  Moreover, if $V^u$ acts strongly on $W \subset V$, then any element in $A_n^u . W$ can be written, modulo $O_n^\circ(V)$, as a linear combination of such elements of the form $u_{-2n - 1}^{i_{2n+1}} u_{- 2n}^{i_{2 n}} \cdots u_{ - 1}^{i_1 }v + O_n(V)$.
\end{prop}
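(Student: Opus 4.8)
The plan is to show that an arbitrary spanning element of $A_n^u$, respectively $A_n^u.W$, can be rewritten in the claimed normal form using only (i) the recursion of Proposition \ref{recursion-n}, (ii) the permutation property, and (iii) the fact that positive modes of $u$ kill $\mathbf{1}$ (resp. act strongly on $W$). A generic element of $A_n^u$ is of the form $u_{k_1}\cdots u_{k_r}\mathbf 1 + O_n(V)$ with $k_1,\dots,k_r\in\mathbb Z$. First I would observe that, since $V^u=\langle u\rangle^1$, we may assume each $k_j\le -1$: any $u_{k_j}$ with $k_j\ge 0$ that is not annihilated immediately can be moved to the right past the other modes, incurring only lower-order correction terms (here one uses that $V^u$ is a subalgebra closed under the modes of $u$, together with the iterated commutator/associativity formulas), until it hits $\mathbf 1$ and vanishes because $u_j\mathbf 1=0$ for $j\ge 0$; the correction terms involve fewer modes, so this terminates by induction on $r$. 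Hence every element of $A_n^u$ is a linear combination of $u_{-k_1}\cdots u_{-k_r}\mathbf 1$ with all $k_j\ge 1$.

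Next I would use the permutation property: modulo $F_{r-1}(\mathbf 1)$ (which is a sum of terms with strictly fewer modes), the product $u_{-k_1}\cdots u_{-k_r}\mathbf 1$ is independent of the order of the $-k_j$, so we may assume $k_1\ge k_2\ge\cdots\ge k_r\ge 1$, i.e. the element is $u_{-k_1}\cdots u_{-k_r}\mathbf 1$ with the exponents grouped as $u_{-2n-1}^{i_{2n+1}}\cdots u_{-1}^{i_1}\mathbf 1$ \emph{except} that some $k_j$ may exceed $2n+1$. The heart of the argument is then to eliminate modes $u_{-k}$ with $k>2n+1$. For this, apply Proposition \ref{recursion-n}: writing $-k=\mathrm{wt}\,u-m-1$ with $m=k+\mathrm{wt}\,u-1$, the hypothesis $k>2n+1$ together with $\mathrm{wt}\,u\ge -n$ gives $m>2n+\mathrm{wt}\,u$ (and $m\ge n+1$), so Eqn.\ (\ref{recursion-n-equation}) rewrites $u_{\mathrm{wt}\,u-m-1}w\sim_n\sum_{j}(\cdots)u_{\mathrm{wt}\,u-n-j-1}w$, a linear combination of $u_{-l}w$ with $\mathrm{wt}\,u+n+1\le l\le 2n+\mathrm{wt}\,u+1\le 2n+1$ — i.e. all the indices appearing are $\le 2n+1$. (In the extremal case $\mathrm{wt}\,u=-n$, Eqn.\ (\ref{recursion-n-equation-lowest}) simply kills the term.) Applying this to the \emph{leftmost} offending mode — legitimate because $O_n^\circ(V)$ is a left ideal under $*_n$, so $u_{-k_1}\cdots u_{-k_{s-1}}(u_{-k_s}w)\sim_n u_{-k_1}\cdots u_{-k_{s-1}}(\text{replacement})w$ provided $u_{-k_s}w\sim_n(\text{replacement})w$ and the preceding modes are applied to it — reduces the largest index. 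Iterating (with an induction on, say, the multiset of indices exceeding $2n+1$, ordered lexicographically) produces an expression all of whose $u$-modes lie in $\{-1,\dots,-(2n+1)\}$; reapplying the permutation property to sort these and collect exponents yields the stated generating form, the $F_{r-1}(\mathbf 1)$ and lower-mode correction terms being handled by an outer induction on $r$. The same argument verbatim, with $\mathbf 1$ replaced by $v\in W$ and "positive modes kill $\mathbf 1$" replaced by "$V^u$ acts strongly on $W$" (which supplies both the permutation property on $W$ and the vanishing $u_jw=0$ for $j\ge 0$), gives the statement about $A_n^u.W$.

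The step I expect to be the main obstacle — or at least the one requiring the most care — is justifying that the recursion of Proposition \ref{recursion-n} may be applied to a mode of $u$ sitting \emph{inside} a string of other modes of $u$ (or to the left of $w$), rather than acting directly on an element of $V$. What one really needs is that for $a\in V$, $a\sim_n b$ implies $u_{-k_1}\cdots u_{-k_{s-1}}a\sim_n u_{-k_1}\cdots u_{-k_{s-1}}b$; concretely, that applying modes of elements of $V$ on the left preserves $O_n^\circ(V)$. Since $u\circ_n v\in O_n^\circ(V)$ and $O_n^\circ(V)$ is part of the two-sided ideal $O_n(V)$ for $*_n$, and since left multiplication by the class of $u$ in $A_n(V)$ is induced by a combination of modes $u_i$, one can extract exactly this left-ideal stability — but one must check that the \emph{particular} combination of modes appearing in iterated $*_n$-products is flexible enough to realize the raw mode products $u_{-k_1}\cdots u_{-k_{s-1}}(-)$ modulo $O_n^\circ(V)$, again leaning on the recursion to trade the "wrong" modes for allowed ones. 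This bookkeeping, together with ensuring the nested inductions (on $r$, and on the index multiset) are well-founded, is where the real work lies; the combinatorics of the coefficients is irrelevant here since we only need that the replacement indices land in the correct range.
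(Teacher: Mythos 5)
Your proposal is correct and follows essentially the same route as the paper's (very terse) proof: reduce to strictly negative modes, eliminate modes $u_{-k}$ with $k>2n+1$ via Proposition \ref{recursion-n}, and sort and collect exponents with the permutation property by induction on the number of modes. The ``main obstacle'' you flag at the end, however, is not actually there. Proposition \ref{recursion-n} is stated for an \emph{arbitrary} $v\in V$ on the right, so applying it to the leftmost mode of $u_{-k}(u_{-k_2}\cdots u_{-k_r}w)$ is a direct instantiation with $v=u_{-k_2}\cdots u_{-k_r}w$; and your own procedure --- permute the offending mode to the front using the identity \eqref{general-permutation}, which holds exactly in $V$ with correction terms in $F_{r-1}$ absorbed by the induction on $r$, then recurse on the leftmost mode --- only ever \emph{adds} elements of $O_n^\circ(V)$ and never multiplies one on the left by raw modes, so no left-ideal stability of $O_n^\circ(V)$ under mode multiplication is required. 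Two small further points: your preliminary step of pushing nonnegative modes rightward is not needed at the outset, since by definition $\langle u\rangle^1$ is already spanned by monomials with all modes $\le -1$, but it does become relevant after an application of the recursion when $\mathrm{wt}\,u\ge n+2$, because then the replacement modes $u_{\mathrm{wt}\,u-n-j-1}$ for small $j$ have nonnegative index and must be permuted down to $\mathbf{1}$ (or to $v\in W$, using the strong action); and your stated range for the replacement indices $l$ is mis-solved (one gets $n+2-\mathrm{wt}\,u\le l\le 2n+1$), though the only conclusion you use, namely $l\le 2n+1$, is correct.
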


\begin{proof}
By Proposition \ref{recursion-n}, the fact that $V^u$ is spanned by elements of the form (\ref{general-span}) with $K = 1$, and $V^u$ acts strongly on $W$, we have that modulo $O^\circ_n(V)$, elements in $A_n^u . W$ can be written as a linear combination of elements of the form $u_{-k_1}\cdots u_{-k_r} v$ for $r \in \N$ and $k_j \in \{ 1, \dots, 2 n+1 \}$.  By induction on $r$, and the assumption that $V^u$ satisfies the permutation property (\ref{general-permutation}) on $W$,  the result follows.   
\end{proof}

In order to reduce the set of generators further, we observe the following:

\begin{lem}\label{strongly-acting-lemma}
 Let $V$ be a vertex operator algebra, and $U = \langle S \rangle^1 \subset V$, such that $U$ acts strongly on $W \subset V$.  If $v \in W$, then modulo $O^L(V)$, $U$ acts strongly on $v_{-k} {\bf 1}$ for all $k \in \mathbb{Z}_+$.
\end{lem}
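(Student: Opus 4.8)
The plan is to reduce the claim to a direct computation using the $L(-1)$-derivative property to move modes past the generator $v_{-k}$ that creates the extra vector. We want to show that for $v \in W$ and $k \in \mathbb{Z}_+$, the subalgebra $U = \langle S \rangle^1$ acts strongly on $v_{-k}\mathbf{1}$ modulo $O^L(V)$; that is, the permutation property \eqref{general-permutation} holds for elements of $U$ acting on $v_{-k}\mathbf{1}$ and the vanishing of nonnegative modes holds, all up to $O^L(V)$.

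First I would record the basic commutation identity \eqref{L(-1)-properties}, namely $[L(-1), w_j] = (L(-1)w)_j = -j w_{j-1}$, together with the relation $\approx$ from Lemma \ref{L-reduction-lemma}: working modulo $O^L(V)$, we may replace $(\mathrm{wt}\, w)w$ by $\sum_j k_j (\text{shift the }j\text{th mode down by one})$. The idea is that $v_{-k}\mathbf{1}$ differs from $v$ (which lies in $W$, where $U$ already acts strongly) only by the application of $L(-1)$-type operators: indeed $v_{-k}\mathbf{1}$ is, up to scalar, a combination of $L(-1)^{k-1}v$ together with lower terms, and more importantly $v_{-k-1}\mathbf{1} \approx$ (scalar)$\cdot v_{-k}\mathbf{1}$ plus lower-weight corrections by \eqref{L-reduction-special-case}. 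So the strategy is: given $u^{(1)}_{-k_1}\cdots u^{(m)}_{-k_m} v_{-k}\mathbf{1}$, first use that $U$ acts strongly on $v$ (for $k=1$, where $v_{-1}\mathbf{1} = v$), and then induct on $k$, using \eqref{L-reduction-special-case} or the $L(-1)$-derivative property to trade $v_{-k-1}\mathbf{1}$ for $v_{-k}\mathbf{1}$ modulo $O^L(V)$ and modulo lower filtration terms $F_{m-1}$.

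Concretely, the key steps in order: (1) Verify the nonnegative-mode vanishing: for $u \in U$ and $j \geq 0$, show $u_j (v_{-k}\mathbf{1}) \in O^L(V) + (\text{terms already known to vanish})$; since $u_j$ commutes with $v_{-k}$ up to terms $(u_i v)_{\ell}$ with $i \geq 0$ via the commutator formula, and $u_i v = 0$ for $i \geq 0$ because $U$ acts strongly on $W \ni v$, one reduces to $v_{-k}(u_j \mathbf{1})$, and $u_j \mathbf{1} = 0$ for $j \geq 0$ since $\mathbf{1}$ is the vacuum — here no $O^L$ is even needed. (2) Verify the permutation property on $v_{-k}\mathbf{1}$: to swap $u^{(a)}_{-k_a}$ and $u^{(a+1)}_{-k_{a+1}}$, use the commutator bracket, which produces $\sum_{i \geq 0}\binom{-k_a}{i}(u^{(a)}_i u^{(a+1)})_{-k_a-k_{a+1}-i}$ acting on $v_{-k}\mathbf{1}$; each such term has $\mathrm{wt}\,(u^{(a)}_i u^{(a+1)})$ smaller (so it still lies in $U$) and decreases the number of separate modes, landing in $F_{m-1}(v_{-k}\mathbf{1})$ after the inductive hypothesis. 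The subtlety is making sure the filtration $F_{m-1}$ is taken with respect to $v_{-k}\mathbf{1}$ as the base vector, which is consistent with the statement. (3) Finally, handle the passage in $k$: if something produces $v_{-k'}\mathbf{1}$ for $k' > k$ during these manipulations, use \eqref{L-reduction-special-case} repeatedly to rewrite it modulo $O^L(V)$ in terms of $v_{-k}\mathbf{1}$ (with scalar coefficients and corrections by $u$-modes acting on $v_{-k}\mathbf{1}$ that drop into lower filtration levels), since $\mathrm{wt}\, u \geq -n$ is not even needed here — only that $j \neq 0$ in the denominators, which holds since we are lowering from $-k' < 0$.

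The main obstacle I anticipate is bookkeeping the filtration levels correctly: one must check that each commutator correction term genuinely lands in $F_{m-1}(v_{-k}\mathbf{1})$ rather than merely in $F_{m-1}(v)$ or some mixed space, and that the $L(-1)$-reductions converting $v_{-k'}\mathbf{1}$ to $v_{-k}\mathbf{1}$ do not inadvertently raise the number of $u$-modes. This is essentially a careful double induction — outer on $k$, inner on the number of modes $m$ — and the delicate point is that the permutation property for $U$ acting on $W$ is a hypothesis only about base vectors in $W$, so one needs $v \in W$ (not $v_{-k}\mathbf{1} \in W$) and must bootstrap from there; the phrase "modulo $O^L(V)$" in the conclusion is exactly what licenses using \eqref{L-reduction-special-case} to shuttle between the vectors $v_{-k}\mathbf{1}$ for different $k$. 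I expect the argument itself to be short once the inductive framework is set up, with the $L(-1)$-derivative property doing all the real work.
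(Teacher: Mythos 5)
Your overall skeleton --- induction on $k$ with base case $v_{-1}\mathbf{1}=v\in W$, and the $(L(-1)+L(0))$ relation of Lemma \ref{L-reduction-lemma} used to trade a monomial ending in $v_{-k}\mathbf{1}$ for monomials ending in $v_{-(k-1)}\mathbf{1}$ modulo $O^L(V)$ --- is exactly the paper's proof, which consists of a single application of the general multi-mode identity of Lemma \ref{L-reduction-lemma} to $w=u^{(1)}_{-i_1}\cdots u^{(m)}_{-i_m}v_{-(k-1)}\mathbf{1}$: the term ending in $v_{-k}\mathbf{1}$ appears with coefficient $-(k-1)\neq 0$, so one solves for it and invokes the inductive hypothesis. (You do want that general identity and not just the two-mode special case \eqref{L-reduction-special-case}, since $m$ is arbitrary.) Your step (1), the exact vanishing $u_j(v_{-k}\mathbf{1})=0$ for $j\geq 0$ via the commutator formula and the creation axiom, is correct and is in fact cleaner than the paper, which leaves that half of ``acts strongly'' implicit.

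The weak point is step (2). The bracket $[u^{(a)}_{-k_a},u^{(a+1)}_{-k_{a+1}}]=\sum_{i\geq 0}\binom{-k_a}{i}\bigl(u^{(a)}_iu^{(a+1)}\bigr)_{-k_a-k_{a+1}-i}$ produces single modes of the composite vectors $u^{(a)}_iu^{(a+1)}$, which lie in $U$ but not in the generating set $S$; the filtration $F_{m-1}(v_{-k}\mathbf{1})$ is spanned by monomials in modes of elements of $S$, so placing $\bigl(u^{(a)}_iu^{(a+1)}\bigr)_{\ell}\,v_{-k}\mathbf{1}$ into $F_{m-1}(v_{-k}\mathbf{1})$ requires re-expanding the iterate, which can reintroduce arbitrarily many generator modes. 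This is precisely why the permutation property is a hypothesis on $U$ acting on $W$ rather than a free consequence of the Jacobi identity, so your commutator computation does not by itself establish it for the new base vector. The repair is already contained in your step (3) and outer induction: the $O^L(V)$-reduction rewrites any monomial $u^{(1)}_{-i_1}\cdots u^{(m)}_{-i_m}v_{-k}\mathbf{1}$, with coefficients equivariant under permuting the $u$-modes, as a combination of monomials with the same number of $u$-modes ending in $v_{-(k-1)}\mathbf{1}$, so the permutation property modulo $O^L(V)$ on $v_{-k}\mathbf{1}$ is inherited from that on $v_{-(k-1)}\mathbf{1}$ and ultimately from the hypothesis on $W$. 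With step (2) replaced by this transfer, your argument coincides with the paper's.
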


\begin{proof}  We prove the result by induction on $k$.  The result is true for $k = 1$ since $v_{-1} {\bf 1} = v$.  Assume the result holds for $1\leq k' <k$.  For $i_1, \dots, i_m \in \mathbb{Z}_+$,  $u^{(1)}, \dots, u^{(m)} \in U$, and $w = u^{(1)}_{-i_1} \cdots u^{(m)}_{-i_m} v_{-(k-1)} {\bf 1}$, Lemma \ref{L-reduction-lemma} implies that
\begin{eqnarray*}
\lefteqn{ (k-1) u^{(1)}_{-i_1} u^{(2)}_{-i_2} \cdots u^{(m)}_{-i_m} v_{-k} {\bf 1} } \\
 &\approx& - (\mathrm{wt} \,  w ) u^{(1)}_{-i_1} \cdots u^{(m)}_{-i_m} v_{-(k-1)} {\bf 1} -  \sum_{j = 1}^m i_j u^{(1)}_{-i_1} \cdots u_{-i_{j-1}}^{(j-1)}u_{-i_{j-1}}  u^{(j)}_{-i_j - 1} u^{(j+1)} _{-i_{j+1} } \cdots u^{(m)}_{-i_m} v_{-(k-1)} {\bf 1} ,
\end{eqnarray*}
so that modulo $O^L(V)$, $u^{(1)}_{-i_1} u^{(2)}_{-i_2} \cdots u^{(m)}_{-i_m} v_{-k} {\bf 1}$ is a linear combination of elements comprised of $U$ acting strongly on $v_{-(k-1)}{\bf 1}$.  Therefore, modulo $O^L(V)$, $U$ acts strongly on $v_{-k} {\bf 1}$.  
\end{proof}

Now combining the restriction of the generating set given by Proposition \ref{general-generators-prop} and the additional relations given by $O^L(V)$, we can further restrict the generators as follows:

\begin{thm}\label{reducing-general-generators-thm}
Let $V$ be a vertex operator algebra.  Fix $n \in \mathbb{Z}_+$, and suppose there is a $u \in V$ with $\mathrm{wt} \, u \geq -n$,  and such that  $V^u = \langle u \rangle^1$ satisfies the permutation property and acts strongly on $W \subset V$.  Let $A_n^u$ be the subalgebra of $A_n(V)$ generated by elements in $V^u$.  Then elements in $A_n^u$ are generated by elements of the form 
\[ u_{-2n}^{i_{2n}} u_{-2 n + 1}^{i_{2n -1}} \cdots u_{ - 1}^{i_1 }\mathbf{1} + O_n(V),\]
for $i_1, i_2, \dots, i_{2n} \in \mathbb{N}$, and elements in $A_n^u . W$ can be expressed as linear combinations of elements of the form $u_{-2n}^{i_{2n}} u_{-2 n + 1}^{i_{2n -1}} \cdots u_{ - 1}^{i_1 }v_{-k} {\bf 1} + O_n(V)$ for $v \in W$ and $k \in \mathbb{Z}_+$.

Furthermore, 
\begin{equation}\label{-2n-1-reduction}
u_{-2n-1} v_{-k} {\bf 1} \approx -\left( \frac{\mathrm{wt} \, u + \mathrm{wt} \, v + 2n + k   -2 }{2n} \right)u_{-2n} v_{-k}  {\bf 1}  - \frac{k}{2n} u_{-2n} v_{-k-1} {\bf 1}. 
\end{equation} 
And for  $i_1, \dots, i_{2n} \in \mathbb{N}$ and $i_{2n+1}, k \in \mathbb{Z}_+$, we have 
\begin{eqnarray}\label{reduce-equation}
\lefteqn{\ \  \  \ \  \  \ \ u_{-2n-1}^{i_{2n + 1}} u_{-2n}^{i_{2n}} u_{-2n + 1}^{i_{2n-1}} \cdots u_{-1}^{i_1}  v_{-k} {\bf 1}  }\\
&\equiv_n & \! \! -\frac{1}{2n( i_{2n} + 1) } \Biggl( \biggl(  \mathrm{wt} \, v_{-k} {\bf 1}   - 1 +  \sum_{j = 1}^{2n+1} i_j (\mathrm{wt} \, u + j -1)  \biggr) u_{-2n - 1}^{i_{2n+1}-1} u_{-2 n}^{i_{2n}+1} u_{-2n+1}^{i_{2n-1}}  \cdots u_{ - 1}^{i_1 }  v_{-k} {\bf 1}  \nonumber \\
& &  - (2n+1) (i_{2n+1} -1)\biggl(  \sum_{j = 1}^{n+ \mathrm{wt} \, u} \! \binom{n + \mathrm{wt} \, u}{j-1}  u_{\mathrm{wt} \, u - n - j - 1} \biggr) u_{-2n-1}^{i_{2n+1} -2} u_{-2n}^{i_{2n}+1}u_{-2n+1}^{i_{2n-1}}  \cdots u_{-1}^{i_1}  v_{-k} {\bf 1}  \nonumber \\
&  &  + \, (2n-1) i_{2n-1} u_{-2n-1}^{i_{2n + 1}-1} u_{-2n}^{i_{2n} + 2}u_{-2n + 1}^{i_{2n-1} - 1} u_{-2n + 2}^{i_{2n-2}} \cdots u_{-1}^{i_1}  v_{-k} {\bf 1}  + \cdots  \nonumber  \\
&  &  \cdots + 2 i_2 u_{-2n - 1}^{i_{2n+1}-1} u_{-2 n}^{i_{2n}+1}u_{-2n+1}^{i_{2n-1}}  \cdots u_{-4}^{i_4}  u_{-3}^{i_3 + 1} u_{-2}^{i_2 - 1} u_{ - 1}^{i_1 } v_{-k} {\bf 1}  \nonumber \\
& &  + \, i_1 u_{-2n - 1}^{i_{2n+1}-1} u_{-2 n}^{i_{2n}+1} u_{-2n+1}^{i_{2n-1}} \cdots u_{-3}^{i_3} u_{-2}^{i_2 + 1} u_{ - 1}^{i_1-1 } v_{-k} {\bf 1}     + k u_{-2 n-1}^{i_{2n+1}- 1} u_{-2n}^{i_{2n}+1} u_{-2n +1}^{i_{2n-1}} \cdots u_{ - 1}^{i_1 }  v_{-k-1} {\bf 1} \nonumber \\
&  &   + f_{i_{2n+1} + i _{2n} + \cdots i_1 -1}( v_{-k} {\bf 1} ) \Biggr).\nonumber
\end{eqnarray}
for some $f_{i_{2n+1} + i _{2n} + \cdots i_1 -1}(  v_{-k} {\bf 1}  ) \in F_{i_{2n+1} + i _{2n} + \cdots i_1 -1}(v_{-k} {\bf 1} )$, where the term with the second summation in Eqn.\ \eqref{reduce-equation} is zero if $\mathrm{wt} \, u = -n$,  where if $n = 1$, this is to be interpreted as
\begin{eqnarray}\label{reduce-equation-n=1}
\lefteqn{u_{-3}^{i_{3}} u_{-2}^{i_{2}} u_{-1}^{i_1}  v_{-k} {\bf 1}  }\\
&\equiv_1 & \! \! -\frac{1}{2( i_{2} + 1) } \Biggl( \biggl(  \mathrm{wt} \, v_{-k} {\bf 1}   - 1 +  \sum_{j = 1}^{3} i_j (\mathrm{wt} \, u + j -1)  \biggr) u_{-3}^{i_{3}-1} u_{-2}^{i_{2}+1} u_{ - 1}^{i_1 }  v_{-k} {\bf 1}  \nonumber \\
& &  - 3 (i_{3} -1)\biggl(  \sum_{j = 1}^{1+ \mathrm{wt} \, u} \! \binom{1 + \mathrm{wt} \, u}{j-1}  u_{\mathrm{wt} \, u - j - 2} \biggr) u_{-3}^{i_{3} -2} u_{-2}^{i_{2}+1} u_{-1}^{i_1}  v_{-k} {\bf 1}  \nonumber \\
& &  + \, i_1 u_{-3}^{i_{3}-1} u_{-2}^{i_{2}+2} u_{ - 1}^{i_1-1 } v_{-k} {\bf 1}    + k  u_{-3}^{i_{3}- 1} u_{-2}^{i_{2}+1} u_{ - 1}^{i_1 }  v_{-k-1} {\bf 1} \nonumber \\
&  &   + f_{i_{3} + i _{2} +  i_1 -1}( v_{-k} {\bf 1} ) \Biggr),\nonumber
\end{eqnarray}
and where $f_{i_{2n+1} + i_{2n} + \cdots i_1 -1} (v_{-k} \mathbf{1}) = 0$ in both (\ref{reduce-equation}) and (\ref{reduce-equation-n=1}) if the $u_{-j}$'s for $j \in \mathbb{Z}_+$ commute. 
\end{thm}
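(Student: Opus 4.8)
The plan is to establish Theorem \ref{reducing-general-generators-thm} in two stages: first the statement about generating sets, and then the explicit reduction formula \eqref{reduce-equation}. For the generating-set statement, I would start from Proposition \ref{general-generators-prop}, which already gives that elements of $A_n^u$ are generated by $u_{-2n-1}^{i_{2n+1}} u_{-2n}^{i_{2n}} \cdots u_{-1}^{i_1} \mathbf{1} + O_n(V)$ and that elements of $A_n^u . W$ are expressible modulo $O_n^\circ(V)$ as linear combinations of $u_{-2n-1}^{i_{2n+1}} u_{-2n}^{i_{2n}} \cdots u_{-1}^{i_1} v_{-k} \mathbf{1} + O_n(V)$. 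The task is then to eliminate the mode $u_{-2n-1}$ using the relations coming from $O^L(V)$. For this I would invoke Lemma \ref{L-reduction-lemma}, specifically Eqn.\ \eqref{L-reduction-special-case} with $j = 2n$, applied iteratively: each occurrence of $u_{-2n-1}$ can be traded, modulo $O^L(V)$, for $u_{-2n}$ acting on something of the same or shifted shape. Combined with Lemma \ref{strongly-acting-lemma} (to handle the fact that $v_{-k-1}\mathbf{1}$ terms appear and that $V^u$ still acts strongly on these modulo $O^L(V)$) and the permutation property, an induction on the total mode count $\sum_j i_j$ (together with a secondary induction on $i_{2n+1}$) reduces everything to expressions involving only $u_{-2n}, u_{-2n+1}, \dots, u_{-1}$, establishing the first two displayed assertions.

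For the explicit formula \eqref{reduce-equation}, the approach is a direct computation applying the $L(-1)+L(0)$ relation to the specific vector $w = u_{-2n-1}^{i_{2n+1}} u_{-2n}^{i_{2n}} \cdots u_{-1}^{i_1} v_{-k}\mathbf{1}$ via the general first identity of Lemma \ref{L-reduction-lemma}. Differentiating term by term, the mode $u_{-2n-1}$ in the first factor becomes $u_{-2n-2}$ with coefficient $2n+1$ — this needs to be re-expressed. Here I would use the recursion of Proposition \ref{recursion-n}, Eqn.\ \eqref{recursion-n-equation} (or more directly Eqn.\ \eqref{for-recursion-proof} with the appropriate value of $l$), to rewrite $u_{-2n-2}$ in terms of $u_{\mathrm{wt}\,u - n - j - 1}$ for $j = 1, \dots, n + \mathrm{wt}\,u$; this is the source of the second summation in \eqref{reduce-equation}, and it vanishes precisely when $\mathrm{wt}\,u = -n$ by Eqn.\ \eqref{recursion-n-equation-lowest}. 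The remaining terms — lowering the exponent on each $u_{-j}$ by one and raising the exponent on $u_{-(j-1)}$ correspondingly, plus the term that produces $v_{-k-1}\mathbf{1}$ — come directly from the binomial expansion of $(1+x)^{i}$-type manipulations hidden in $L(-1)$ acting through powers, i.e., from the Leibniz rule for $L(-1)$ across the product $u_{-2n-1}^{i_{2n+1}} \cdots u_{-1}^{i_1} v_{-k}\mathbf{1}$. The coefficient $\frac{1}{2n(i_{2n}+1)}$ arises because one isolates the term $u_{-2n}^{i_{2n}+1}$ (which appears with combinatorial multiplicity $i_{2n}+1$ after reindexing) and divides through; the error term $f_{\dots}(v_{-k}\mathbf{1}) \in F_{\sum i_j - 1}(v_{-k}\mathbf{1})$ absorbs all the lower-length commutator corrections guaranteed by the permutation property \eqref{general-permutation}.

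The main obstacle I anticipate is bookkeeping the combinatorics of the Leibniz expansion correctly so that the coefficients in \eqref{reduce-equation} come out exactly as stated — in particular, tracking how the power $(1+x)$-weighting in the $L(-1)$-derivative property interacts with repeated modes $u_{-j}^{i_j}$, and verifying that the term generated from differentiating the $u_{-2n}^{i_{2n}}$ factor is what supplies the extra $+1$ to the exponent and the factor $i_{2n}+1$ in the denominator. A second delicate point is confirming that all the "extra" terms produced — those where two distinct modes among $u_{-1},\dots,u_{-2n+1}$ get shifted, or where the ordering must be normalized — genuinely lie in $F_{\sum i_j - 1}(v_{-k}\mathbf{1})$, which requires careful use of the permutation property and the definition of the filtration $F_r$; when the $u_{-j}$'s commute these corrections collapse to zero, giving the stated special case. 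The $n=1$ case \eqref{reduce-equation-n=1} is then simply the specialization, obtained by setting $2n = 2$ and noting the chain of intermediate modes $u_{-2n+1}, \dots, u_{-3}$ degenerates, so it requires no separate argument beyond observing which terms survive.
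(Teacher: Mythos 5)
Your proposal follows essentially the same route as the paper's proof: Eqn.\ \eqref{-2n-1-reduction} from Lemma \ref{L-reduction-lemma} with $j=2n$, the derivation of \eqref{reduce-equation} by applying $(L(-1)+L(0))$ to the monomial and using Proposition \ref{recursion-n} to rewrite the resulting $u_{-2n-2}$ (with the second summation vanishing when $\mathrm{wt}\,u=-n$), the permutation property to push corrections into $F_{\sum i_j - 1}(v_{-k}\mathbf{1})$, and finally the double induction on total degree and $i_{2n+1}$ together with Lemma \ref{strongly-acting-lemma} to eliminate $u_{-2n-1}$ from the generating set of Proposition \ref{general-generators-prop}. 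All the key ingredients and the isolation of the $u_{-2n}^{i_{2n}+1}$ term giving the $\frac{1}{2n(i_{2n}+1)}$ coefficient match the paper's argument.
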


\begin{proof} We first prove Eqns.\ (\ref{-2n-1-reduction}) and \eqref{reduce-equation}.   Eqn.\ (\ref{-2n-1-reduction}) follows from Eqn.\ (\ref{L-reduction-special-case}) applied to the case when  $j = 2n$.

Now note that for $u_{-2n -2} = u_{\mathrm{wt} \, u - m - 1}$  and  $m = \mathrm{wt} \, u + 2n + 1 \geq n + 1$, we have by Proposition \ref{recursion-n}, that for any $v \in V$,
\begin{eqnarray}\label{using-n-recursion}
u_{-2n-2} v &\sim_n &  - \sum_{j = 1}^{n+ \mathrm{wt} \, u} \binom{n + \mathrm{wt} \, u}{j-1} \binom{n + \mathrm{wt} \, u -j}{n+\mathrm{wt} \, u -j } u_{\mathrm{wt} \, u - n - j - 1} v\\
&=& - \sum_{j = 1}^{n+ \mathrm{wt} \, u} \binom{n + \mathrm{wt} \, u}{j-1}  u_{\mathrm{wt} \, u - n - j - 1} v, \nonumber
\end{eqnarray}
if $\mathrm{wt} \, u > -n$ and $u_{-2n-2}v \sim_n 0$ if $\mathrm{wt} \, u = -n$.  

Thus using (\ref{L(-1)-properties}), the permutation property of $V^u$ (\ref{general-permutation}), the fact that $V^u$ acts strongly on $W$, and (\ref{using-n-recursion}), we have that for $i_1, i_2, \dots, i_{2n+1} \in \mathbb{N}$, $k \in \mathbb{Z}_+$, and $\mathrm{wt} \, u > -n$
\begin{eqnarray*}
0 &\approx& (L(-1) + L(0)) u_{-2n - 1}^{i_{2n+1}} u_{-2 n}^{i_{2n}} \cdots u_{ - 1}^{i_1 } v_{-k} {\bf 1} \\
&=& \left(  \mathrm{wt} \, v_{-k} {\bf 1}  + \sum_{j = 1}^{2n+1} i_j (\mathrm{wt} \, u + j -1) \right) u_{-2n - 1}^{i_{2n+1}} u_{-2 n}^{i_{2n}} \cdots u_{ - 1}^{i_1 } v_{-k} {\bf 1} \\
& & \quad + \, (2n+1) i_{2n+1} u_{-2n -2} u_{-2n-1}^{i_{2n+1} -1} u_{-2n}^{i_{2n}} \cdots u_{-1}^{i_1}  v_{-k} {\bf 1}   \\
& & \quad + \, 2n i_{2n} u_{-2n-1}^{i_{2n + 1} + 1} u_{-2n}^{i_{2n} -1} u_{-2n + 1}^{i_{2n-1}} \cdots u_{-1}^{i_1}  v_{-k} {\bf 1}   + \cdots \\
& & \quad \cdots + 2 i_2 u_{-2n - 1}^{i_{2n+1}} u_{-2 n}^{i_{2n}} \cdots u_{-3}^{i_3 + 1} u_{-2}^{i_2 - 1} u_{ - 1}^{i_1 }  v_{-k} {\bf 1}  \\
& & \quad + \, i_1 u_{-2n - 1}^{i_{2n+1}} u_{-2 n}^{i_{2n}} \cdots u_{-2}^{i_2 + 1} u_{ - 1}^{i_1-1 }v_{-k} \mathbf{1} +k u_{-2 n-1}^{i_{2n+1}} u_{-2n}^{i_{2n}} \cdots u_{ - 1}^{i_1 }  v_{-k-1} {\bf 1} \\
& & \quad +  f_{i_{2n+1} + i _{2n} + \cdots i_1 -1}( v_{-k} {\bf 1} )\\
&\sim_n& \left( \mathrm{wt} \, v_{-k} {\bf 1}  +  \sum_{j = 1}^{2n+1} i_j (\mathrm{wt} \, u + j -1) \right) u_{-2n - 1}^{i_{2n+1}} u_{-2 n}^{i_{2n}} \cdots u_{ - 1}^{i_1 }  v_{-k} {\bf 1}  \\
& & \quad - \, (2n+1) i_{2n+1} \left(  \sum_{j = 1}^{n+ \mathrm{wt} \, u} \binom{n + \mathrm{wt} \, u}{j-1}  u_{\mathrm{wt} \, u - n - j - 1} \right) u_{-2n-1}^{i_{2n+1} -1} u_{-2n}^{i_{2n}} \cdots u_{-1}^{i_1}  v_{-k} {\bf 1}  \\
& & \quad + \, 2n i_{2n} u_{-2n-1}^{i_{2n + 1} + 1} u_{-2n}^{i_{2n} -1} u_{-2n + 1}^{i_{2n-1}} \cdots u_{-1}^{i_1}  v_{-k} {\bf 1}   \\
& & \quad + \,  (2n-1) i_{2n-1} u_{-2n-1}^{i_{2n + 1}} u_{-2n}^{i_{2n} + 1}u_{-2n + 1}^{i_{2n-1} - 1} u_{-2n + 2}^{i_{2n-2}} \cdots u_{-1}^{i_1} v_{-k} \mathbf{1} + \cdots \\
& & \quad \cdots +  2 i_2 u_{-2n - 1}^{i_{2n+1}} u_{-2 n}^{i_{2n}} \cdots u_{-3}^{i_3 + 1} u_{-2}^{i_2 - 1} u_{ - 1}^{i_1 } v_{-k} {\bf 1}  \\
& & \quad + \, i_1 u_{-2n - 1}^{i_{2n+1}} u_{-2 n}^{i_{2n}} \cdots u_{-2}^{i_2 + 1} u_{ - 1}^{i_1-1 }  v_{-k} \mathbf{1}  + k u_{-2 n-1}^{i_{2n+1}} u_{-2n}^{i_{2n}} \cdots u_{ - 1}^{i_1 }  v_{-k-1} {\bf 1} \\
& & \quad +  f_{i_{2n+1} + i _{2n} + \cdots i_1 -1}( v_{-k} {\bf 1} ),\\
\end{eqnarray*}
where $ f_{i_{2n+1} + i _{2n} + \cdots i_1 -1}( v_{-k} {\bf 1} ) \in F_{i_{2n+1} + i _{2n} + \cdots i_1 -1}(v_{-k} {\bf 1} )$, and this is zero if the $u_{-2n-1}, \dots, u_{-1}$ commute since these terms are only arising from reordering the $u_{-m}$'s after acting by $L(-1)$. 
Therefore if $i_{2n} \neq 0$, we have that 
\begin{eqnarray*} 
\lefteqn{ u_{-2n-1}^{i_{2n + 1} + 1} u_{-2n}^{i_{2n} -1} u_{-2n + 1}^{i_{2n-1}} \cdots u_{-1}^{i_1}   v_{-k} {\bf 1}   }\\
&\equiv_n& -\frac{1}{2n i_{2n} } \Biggl(  \left(  \mathrm{wt} \, v_{-k} {\bf 1}  + \sum_{j = 1}^{2n+1} i_j (\mathrm{wt} \, u + j -1) \right) u_{-2n - 1}^{i_{2n+1}} u_{-2 n}^{i_{2n}} \cdots u_{ - 1}^{i_1 }  v_{-k} {\bf 1}   \\
& & \quad - \, (2n+1) i_{2n+1} \left(  \sum_{j = 1}^{n+ \mathrm{wt} \, u} \binom{n + \mathrm{wt} \, u}{j-1}  u_{\mathrm{wt} \, u - n - j - 1} \right) u_{-2n-1}^{i_{2n+1} -1} u_{-2n}^{i_{2n}} \cdots u_{-1}^{i_1} v_{-k} {\bf 1}  \\
& & \quad  + \, (2n-1) i_{2n-1} u_{-2n-1}^{i_{2n + 1}} u_{-2n}^{i_{2n} + 1}u_{-2n + 1}^{i_{2n-1} - 1} u_{-2n + 2}^{i_{2n-2}} \cdots u_{-1}^{i_1}  v_{-k} {\bf 1}   + \cdots   \\
& & \quad \cdots +  2 i_2 u_{-2n - 1}^{i_{2n+1}} u_{-2 n}^{i_{2n}} \cdots u_{-4}^{i_4} u_{-3}^{i_3 + 1} u_{-2}^{i_2 - 1} u_{ - 1}^{i_1 } v_{-k} {\bf 1}  \\
& & \quad + \, i_1 u_{-2n - 1}^{i_{2n+1}} u_{-2 n}^{i_{2n}} \cdots u_{-3}^{i_3} u_{-2}^{i_2 + 1} u_{ - 1}^{i_1-1 } v_{-k} {\bf 1}  + k u_{-2 n-1}^{i_{2n+1}} u_{-2n}^{i_{2n}} \cdots u_{ - 1}^{i_1 }  v_{-k-1} {\bf 1} \\
& & \quad +  f_{i_{2n+1} + i _{2n} + \cdots i_1 -1}( v_{-k} {\bf 1} ) \Biggr).
\end{eqnarray*}
Equivalently, shifting  $i_{2n+1} \mapsto i_{2n + 1} -1$ and $i_{2n} -  1 \mapsto i_{2n}$, we have that for $i_{2n+1} \in \mathbb{Z}_+$ and $i_1, \dots, i_{2n} \in \mathbb{N}$, gives Eqn.\  \eqref{reduce-equation}. 

We are left to prove the statements in the first paragraph of the theorem. 

By Proposition \ref{general-generators-prop}, we have that $A_n^u$ is generated by elements of the form 
\[ u_{-2n - 1}^{i_{2n+1}} u_{- 2n}^{i_{2 n}} \cdots u_{ - 1}^{i_1 }\mathbf{1} + O_n(V) ,\]
for $i_1, i_2, \dots, i_{2 n+1} \in \mathbb{N}$, modulo $O_n^\circ(V) \subset O_n(V)$, and $A_n^u . W$ consists of linear combinations of elements of the form $u_{-2n - 1}^{i_{2n+1}} u_{- 2n}^{i_{2 n}} \cdots u_{ - 1}^{i_1 } v + O_n(V) $ for $v \in W$, also modulo $O_n^\circ(V) \subset O_n(V)$.

For fixed $v \in W$, and $i_1, i_2, \dots, i_{2n+1} \in \mathbb{N}$, we need to show that 
$u_{-2 n - 1}^{i_{2n +1}} u_{-2n}^{i_{2n}}  \cdots u_{ - 1}^{i_1 }v$ can be expressed as a linear combination of elements of the form $ u_{-2n}^{j_{2n}}  \cdots u_{ - 1}^{j_1 }v_{-k} {\bf 1}$, for $k \in \mathbb{Z}_+$ and $j_1, j_2, \dots, j_{2n} \in \mathbb{N}$.   We prove this by induction on the total degree of elements of the form $u_{-2 n - 1}^{i_{2n +1}} u_{-2n}^{i_{2n}}  \cdots u_{ - 1}^{i_1 }v_{-k} {\bf 1}$ where the total degree is defined as follows:  
For fixed $v \in W$, $k \in \mathbb{Z}_+$ and $i_1, i_2, \dots, i_{2n+1} \in \mathbb{N}$, we define the total degree of $u_{-2 n - 1}^{i_{2n +1}} u_{-2n}^{i_{2n}}  \cdots u_{ - 1}^{i_1 }v_{-k} {\bf 1}$  with respect to $v_{-k} {\bf 1}$ to be 
\[ \mathrm{deg} \, u_{-2 n - 1}^{i_{2n +1}}   u_{-2n}^{i_{2n}}  \cdots u_{ - 1}^{i_1 } v_{-k} {\bf 1} = \sum_{j=1}^{2n+1} i_j .\] 
Note that the degree is independent of $k$. 

For degree one elements acting on $v_{-k} {\bf 1}$, including $v_{-1} {\bf 1} = v$, if $i_{2n+1} = 0$, the result follows trivially, and if $i_{2n+1} = 1$, the result follows from Eqn.\ \eqref{-2n-1-reduction}.

Assume the result holds for elements of total degree less than or equal to $m$ acting on $v_{-k} {\bf 1}$ for $k \in \mathbb{Z}_+$, and let $i_{1}  + \cdots + i_{2n+1} = m+1$.    If $i_{2n+1} = 0$ then the result holds trivially.  If $i_{2n+1} >0$, the result follows by  Eqn.\  (\ref{reduce-equation}), by  induction on $i_{2n+1}$ at degree $m+1$,  by the inductive assumption for degree less than or equal to $m$, and by Lemma   \ref{strongly-acting-lemma}. This finishes the result in the case $\mathrm{wt} \, u > -n$.   In the case $\mathrm{wt} \, u = -n$, we simply note that since $u_{-2n-2}v \sim_n 0$, in this case the second term on the right hand side of Eqn.\ \eqref{reduce-equation} vanishes, and the results hold.    
\end{proof}

\subsection{Reducing generators of $A_n(V)$  further by establishing multiplication formulas}\label{mult-section}

We proceed to reduce the number of generators further by considering the types of elements produced from multiplying certain generators in $A_n(V)$.  We  first prove a general multiplication formula in the lemma below, and then a proposition employing one case of this formula to rewrite certain linear combinations of certain generators in $A_n(V)$ in terms of products of other generators.  This then ultimately allows us to further reduce the set of generators in Theorem \ref{last-generator-theorem}.

For the next lemma, we recall \cite{LL}, the normal ordering in $V$ given by 
\[ {}_{\circ}^{\circ} \,  u_j u_ k \,  {}_{\circ}^{\circ} \,  =  \left\{ \begin{array}{ll}
u_j u_k & \mbox{if $j<0$}\\
u_k u_j & \mbox{if $j \geq 0$} 
\end{array}
\right.  .\]

\begin{lem}\label{multiplication-lemma}
Let $V$ be a vertex operator algebra. Fix $n\in \mathbb{N}$. Let $t\in \mathbb{Z}_+$, and $i_1,\dots, i_t \in \mathbb{N}$.  For $1\leq s \leq t$, set $p_s=\sum_{j=1}^{s}i_j$, $p_0 = 0$,  and $r=\sum_{j=1}^{t}(\text{wt }u+j-1)i_j$.  We have the following formula for $u_{-t}^{i_{t}}\dots u_{-1}^{i_{1}}{\bf{1}}*_nv$, where $u, v\in V$

\begin{multline}\label{multiplication-formula-infinity} 
 u_{-t}^{i_{t}}\cdots u_{-1}^{i_{1}}{\bf{1}}*_nv\\
= \sum_{m=0}^n \sum_{j=-m}^n (-1)^m\binom{m+n}{n}\binom{n+r}{j+m}\displaystyle\sum_{\substack{k_1,\dots ,k_{p_t}\in \mathbb{N}\\k_1+\cdots +k_{p_t}=n-j}} \! \! {}_{\circ}^{\circ}P_{i_1,\dots, i_{t}}(k_1,\dots, k_{p_t}) {}_{\circ}^{\circ} v +g_{i_1,\dots, i_{t}}(v), 
\end{multline}
where 
\begin{equation}\label{define-P}
P_{i_1,\dots, i_{t}}(k_1,\dots, k_{p_t})=\prod_{s=1}^{t}\prod_{l=1}^{i_{s}}\binom{k_{l+p_{s-1}}+s-1}{s-1}u_{-k_{l+p_{s-1}}-s},
\end{equation}
and $g_{i_1,\dots, i_{t}}(v)$ is a linear combination of elements of the form  ${}_{\circ}^{\circ}u_{q_1}\cdots u_{q_{p_t}}{}_{\circ}^{\circ}v$, where $q_1, \dots,q_{p_t} \in \mathbb{Z}$ with at least one of the $q$'s nonnegative.

Furthermore if $n + r \geq 0$ and $r <0$, then the range for the sum in $j$ in Eqn.\  \eqref{multiplication-formula-infinity} can be considered to be  $-n\leq j \leq n + r$.
\end{lem}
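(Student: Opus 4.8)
The plan is to reduce the identity, via the definition of $*_n$, to a single residue computation for the vertex operator $Y(w,x)$ of $w=u_{-t}^{i_{t}}\cdots u_{-1}^{i_{1}}\mathbf{1}$, and then to read off the coefficients by expanding $(1+x)^{\mathrm{wt}\,u+n}$ together with the relevant normal-ordered product of modes of $u$. To begin, note that $\mathrm{wt}\,w=r$: the operator $u_{-s}$ raises weight by $\mathrm{wt}\,u+s-1$ (extend by linearity when $u$ is inhomogeneous), so applying $u_{-1}^{i_1},\dots,u_{-t}^{i_t}$ to $\mathbf{1}$ produces a vector of weight $\sum_{s=1}^{t}(\mathrm{wt}\,u+s-1)i_s=r$. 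Hence, by definition,
\[
w*_n v=\sum_{m=0}^{n}(-1)^m\binom{m+n}{n}\res_x\frac{(1+x)^{r+n}Y(w,x)v}{x^{n+m+1}},
\]
and it suffices to compute $\res_x(1+x)^{r+n}x^{-n-m-1}Y(w,x)v$ for each $m$ and recombine.

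Next I would put $Y(w,x)$ into closed form using three standard facts (cf.\ \cite{LL}): $u_{-s}b=(L(-1)^{(s-1)}u)_{-1}b$ where $L(-1)^{(k)}=\tfrac{1}{k!}L(-1)^k$; the $L(-1)$-derivative property $Y(L(-1)^{(k)}a,x)=\partial_x^{(k)}Y(a,x)$; and $Y(c_{-1}b,x)={}_{\circ}^{\circ}Y(c,x)Y(b,x){}_{\circ}^{\circ}=Y^+(c,x)Y(b,x)+Y(b,x)Y^-(c,x)$. Peeling off the modes of $w$ one at a time with these identities expresses $Y(w,x)$ as a nested normal-ordered product of the fields $\partial_x^{(s-1)}Y(u,x)$ ($1\le s\le t$), with $\partial_x^{(s-1)}Y(u,x)$ occurring $i_s$ times, so $p_t$ factors in all. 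Writing each such field as its regular part $\sum_{k\ge 0}\binom{k+s-1}{s-1}u_{-k-s}x^k$ plus its singular part $\sum_{k\ge 0}(-1)^{s-1}\binom{k+s-1}{s-1}u_k x^{-k-s}$ and multiplying the nested product out level by level, I would sort the result into: (a) the one term in which every factor contributes its regular part; and (b) all remaining terms, each a product of $p_t$ modes of $u$ applied to $v$ with at least one non-negative mode present. In (a) the modes are the creation modes $u_{-k_\ell-s_\ell}$; reorganizing them into the order prescribed by $P_{i_1,\dots,i_t}$ via the iterate and commutator formulas produces correction terms that are again products of $p_t$ modes of $u$ acting on $v$ with a non-negative mode, hence of type (b), and term (a) itself becomes $\sum_{k_1,\dots,k_{p_t}\ge 0}{}_{\circ}^{\circ}P_{i_1,\dots,i_t}(k_1,\dots,k_{p_t}){}_{\circ}^{\circ}v\,x^{k_1+\cdots+k_{p_t}}$.

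Applying $\res_x(1+x)^{r+n}x^{-n-m-1}(\cdot)$ and writing $(1+x)^{r+n}=\sum_{a\ge 0}\binom{n+r}{a}x^a$, the contribution of term (a) is $\sum_{K\ge 0}\binom{n+r}{n+m-K}\sum_{k_1+\cdots+k_{p_t}=K}{}_{\circ}^{\circ}P_{i_1,\dots,i_t}{}_{\circ}^{\circ}v$, where the requirement $a=n+m-K\ge 0$ forces $K\le n+m$. Substituting $K=n-j$ turns this into $\sum_{j=-m}^{n}\binom{n+r}{m+j}\sum_{k_1+\cdots+k_{p_t}=n-j}{}_{\circ}^{\circ}P_{i_1,\dots,i_t}{}_{\circ}^{\circ}v$, and multiplying by $(-1)^m\binom{m+n}{n}$ and summing over $m$ yields exactly the displayed double sum in \eqref{multiplication-formula-infinity}. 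The images under the same residue of the type-(b) terms and of the reordering corrections are, by construction, finite linear combinations of elements ${}_{\circ}^{\circ}u_{q_1}\cdots u_{q_{p_t}}{}_{\circ}^{\circ}v$ with at least one $q_\ell\ge 0$; their total, weighted by $(-1)^m\binom{m+n}{n}$ and summed over $m$, is the asserted $g_{i_1,\dots,i_t}(v)$. For the final clause, if $n+r\ge 0$ and $r<0$ then $\binom{n+r}{m+j}$ is an ordinary binomial coefficient and so vanishes unless $0\le m+j\le n+r$; hence every term with $j>n+r$ is zero (and such terms occur, as $n+r<n$), while enlarging the lower summation limit to $-n$ only introduces terms with $m+j<0$, which also vanish. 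Thus the $j$-sum may be taken over $-n\le j\le n+r$.

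The step I expect to be the main obstacle is the sorting and reordering carried out in the second paragraph: isolating term (a) cleanly from the nested normal-ordered expansion and, in particular, checking that once its creation modes have been placed in the order of $P_{i_1,\dots,i_t}$ all discrepancy terms have precisely the shape allowed for $g$, namely length-$p_t$ monomials in the modes of $u$ applied to $v$ with a non-negative mode. Since the modes of $u$ need not commute in a general vertex operator algebra, this requires the iterate and commutator formulas for $V$ together with a fixed (but immaterial) convention for the ordering of creation modes inside ${}_{\circ}^{\circ}\cdots{}_{\circ}^{\circ}$; when $u$ generates a Heisenberg or Virasoro subvertex algebra these reorderings essentially collapse and the computation reduces to the situation treated in \cite{AB-details-of-Heisenberg}. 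Everything else is reindexing of sums and elementary properties of binomial coefficients.
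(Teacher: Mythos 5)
Your proposal is correct and follows essentially the same route as the paper: expand $Y(u_{-t}^{i_t}\cdots u_{-1}^{i_1}\mathbf{1},x)$ as a normal-ordered product of the derivative fields $\partial_x^{(s-1)}Y(u,x)$, expand $(1+x)^{n+r}$, take the residue, reindex, and separate the all-creation-mode terms from those with a nonnegative mode. The only cosmetic difference is in how that separation is made — the paper sums over all $k_\ell\in\mathbb{Z}$ and observes that $\binom{k+s-1}{s-1}$ vanishes for $-s<k<0$, so restricting to $k_\ell\in\mathbb{N}$ already captures exactly the all-negative-mode terms (with $P$ appearing in the natural order of the product, so no reordering step is needed), whereas you split each field into regular and singular parts and then reorder; both yield the same $g_{i_1,\dots,i_t}(v)$.
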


\begin{proof}
From the definition of multiplication in the level $n$ Zhu algebra, we have
\begin{eqnarray*}
\lefteqn{  u_{-t}^{i_{t}}\cdots u_{-1}^{i_{1}}{\bf{1}}*_n v}\\
&=& \res_x\sum_{m=0}^n(-1)^m\binom{m+n}{n}x^{-m-n-1}\sum_{j=0}^{\infty}\binom{n+r}{j}x^j{}_{\circ}^{\circ}\prod_{s=1}^t\Big(\sum_{l \in \mathbb{Z}}\binom{l +s-1}{s-1}u_{-l-s}x^l\Big)^{i_s}{}_{\circ}^{\circ}v\\
&=& \sum_{m=0}^n\sum_{j=0}^{\infty}(-1)^m\binom{m+n}{n}\binom{n+r}{j}\displaystyle\sum_{\substack{k_1,\dots ,k_{p_t}\in \mathbb{Z}\\k_1+\cdots +k_{p_t}=n+m-j}}{}_{\circ}^{\circ}P_{i_1,\dots, i_{t}}(k_1,\dots, k_{p_t}){}_{\circ}^{\circ}v\\
&=&\sum_{m=0}^n\sum_{j=-m}^{\infty}(-1)^m\binom{m+n}{n}\binom{n+r}{j+m}\displaystyle\sum_{\substack{k_1,\dots ,k_{p_t}\in \mathbb{Z}\\k_1+\cdots +k_{p_t}=n-j}}{}_{\circ}^{\circ}P_{i_1,\dots, i_{t}}(k_1,\dots, k_{p_t}){}_{\circ}^{\circ}v,\
\end{eqnarray*}
where $P$ is given by Eqn.\ \eqref{define-P}.

The binomial coefficient $\binom{k_{l+p_{s-1}}+s-1}{s-1}$ appearing in the formula for $P_{i_1,\dots, i_{t}}(k_1,\dots, k_{p_t})$ is zero if $k_{l+p_{s-1}}$ is such that $-s< k_{j+p_{s-1}}<0$. Thus restricting the $k_{j+p_{s-1}}$'s to be nonnegative includes all the terms corresponding to products of $u_{i}$'s where all $i$'s are negative.  In other words, we have 
\begin{multline*}
u_{-t}^{i_{t}}\cdots u_{-1}^{i_{1}}{\bf{1}}*_nv \\
=\sum_{m=0}^n\sum_{j=-m}^{\infty}(-1)^m\binom{m+n}{n}\binom{n+r}{j+m}\displaystyle\sum_{\substack{k_1,\dots,k_{p_t}\in \mathbb{N}\\k_1+\cdots +k_{p_t}=n-j}} \! \! {}_{\circ}^{\circ} P_{i_1,\dots, i_{t}}(k_1,\dots, k_{p_t}) {}_{\circ}^{\circ} v+g_{i_1,\dots, i_{t}}(v),
\end{multline*}
where $g_{i_1,\dots, i_{t}}(v)$ is a linear combination of elements of the form  ${}_{\circ}^{\circ}u_{q_1}\cdots u_{q_{p_t}}{}_{\circ}^{\circ}v$, for $q_1, \dots, q_{p_t} \in \mathbb{Z}$ with at least one of the $q$'s  nonnegative.  Observing then that we must have $n-j= k_1 + \cdots + k_{p_t} \geq 0$ if $k_1, \dots, k_{p_t} \in \mathbb{N}$, we see that $j$ ranges only from $-m$ to $n$, proving Eqn.\  \eqref{multiplication-formula-infinity}.

If $n + r \geq 0$, then $\binom{n+r}{j+m}$ is zero when $j>n+r-m$, and thus if $r<0$, the expression becomes
\begin{eqnarray*}
\lefteqn{ u_{-t}^{i_{t}}\cdots u_{-1}^{i_{1}}{\bf{1}}*_n v}\\
&=& \sum_{m=0}^n\sum_{j=-m}^{n+r}(-1)^m\binom{m+n}{n}\binom{n+r}{j+m}\displaystyle\sum_{\substack{k_1,\dots ,k_{p_t}\in \mathbb{N}\\k_1+\cdots +k_{p_t}=n-j}}{}_{\circ}^{\circ}P_{i_1,\dots, i_{t}}(k_1,\dots, k_{p_t}){}_{\circ}^{\circ}v +g_{i_1,\dots, i_{t}}(v),
\end{eqnarray*}
proving the last statement.
\end{proof}

\begin{rem} {\em The formula above shows that the terms in the multiplication formula that are not part of $g_{i_1,\dots, i_{t}}(v)$ are indexed by ordered partitions of $2n$ and less. }
\end{rem}

\begin{rem}{\em 
$g_{i_1,\dots, i_{t}}(v)$ can be relatively manageable in the case that $V$ is, for example, the rank one Heisenberg vertex operator algebra, $M(1)$. For instance, in \cite{BVY-Heisenberg} and \cite{AB-details-of-Heisenberg}, respectively, where $A_1(M(1))$ and $A_2(M(1))$, respectively, are explicitly calculated, an ordering is defined on elements of $M(1)$ and Lemma \ref{multiplication-lemma} is used to induct on this ordering and reduce the number of required generators. A key part of this argument uses the fact that terms in $g_{i_1,\dots, i_{t}}(v)$ are of lower order than certain other terms in the corresponding multiplication formula.  In Proposition \ref{reduce-prop} and Theorem \ref{last-generator-theorem} below, we give an example of how this can be done.}
\end{rem}

\begin{rem}{\em In Proposition \ref{reduce-prop} and Theorem \ref{last-generator-theorem} below, we give just one example of how Eqn.\  \eqref{multiplication-formula-infinity} can be used to find further relations in $A_n(V)$ and to reduce the set of generators of $A_n(V)$.  However, one difficulty in using  Eqn.\  \eqref{multiplication-formula-infinity} in this way is that it is sometimes necessary to argue that certain coefficients are nonzero and this can be difficult, especially in cases where Proposition \ref{recursion-n} or Lemma \ref{multiplication-lemma} is being applied.  In particular, one often obtains terms whose coefficients are rather complicated combinations of binomial coefficients that must be simplified or at least shown to be nonzero. }
\end{rem}

\begin{cor}\label{multiplication-cor-special-case-more-general}
Let $V$ be a vertex operator algebra.  Fix $n \in \mathbb{N}$, and suppose there is a $u \in V$, such that  $\mathrm{wt}\,  u \geq -n$, and  $V^u = \langle u \rangle^1 \subset V$ acts strongly on $W \subset V$.   For $i,t \in \mathbb{Z}_+$ and $v \in W$, we have the following multiplication formula
\begin{multline}\label{multiplication-cor-formula}
u_{-t}^i \mathbf{1} *_n v =  \sum_{m=0}^n \sum_{j =0}^{m+n}   \sum_{\stackrel{p_1, p_2, \dots, p_i \in \mathbb{N}}{p_1 + p_2 + \cdots + p_i =  j} } (-1)^m \binom{m+n}{n}  \binom{n + i (\mathrm{wt}\, u + t - 1) }{m + n - j} \\
 \left( \prod_{l = 1}^i \binom{p_l +  t - 1}{t-1}  u_{-p_l -t} \right)  \, v  + g_{i}(v) , 
\end{multline}
where  $g_{i}(v)$ is a linear combination of elements of the form  ${}_{\circ}^{\circ}u_{q_1}\cdots u_{q_{i}} {}_{\circ}^{\circ}v$ where at least one of the  $q$'s is nonnegative.
That is, modulo $g_{i}(v)$, the product $u_{-t}^i \mathbf{1} *_n v$ involves a linear combination of terms of the form $u_{-p_1-t} u_{-p_2-t}  \cdots u_{-p_i-t}  \, v$ where $\mathbf{p} = (p_1, p_2, \dots, p_i)$ is an ordered partition of $j$ into $i$  nonnegative integer parts for $0 \leq j \leq m+ n \leq 2n$.  

Furthermore, when $i = 1$ and $u_{k-t}v = 0$ for $k\in \mathbb{Z}_+$, or if  $n + \wt u + t - 1 \geq 0$ and $\wt u + t <2$, then $g_1(v) = 0$.  In addition,  if $n \geq \wt u + t \geq 2$, then letting $k = \mathrm{wt} \, u + t$, we have 
\[g_1(v) =   \sum_{m=0}^{k - 2} \sum_{j=-1}^{m -k + 1}(-1)^m\binom{m+n}{n}\binom{n+ k -1 }{m + n - j }\binom{j  + t-1}{t-1} u_{-j  - t}  v , \]
and more generally
\[ g_1(v) =  \sum_{m=0}^n\sum_{j=-1}^{-\infty}(-1)^m\binom{m+n}{n}\binom{n+ \text{wt }u + t -1 }{m + n - j }\binom{j  + t-1}{t-1} u_{-j  - t}  v .\]
\end{cor}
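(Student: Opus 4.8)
The plan is to specialize the general multiplication formula of Lemma \ref{multiplication-lemma} to the case $t' = t$, a single generator $u$ repeated with multiplicity $i$ (so in the notation of that lemma, we take the single block $i_t = i$ and all other $i_s = 0$; equivalently $p_s$ collapses and $r = i(\mathrm{wt}\, u + t - 1)$). With this substitution, $P_{i_1,\dots,i_t}(k_1,\dots,k_{p_t})$ becomes $\prod_{l=1}^i \binom{k_l + t - 1}{t-1} u_{-k_l - t}$, and the inner sum over $k_1 + \cdots + k_i = n - j$ (with $k_l \in \mathbb{N}$) is exactly a sum over ordered partitions of $n-j$ into $i$ nonnegative parts. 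First I would reindex that inner sum by setting $p_l = k_l$ and renaming the outer summation variable so that the partitioned quantity is called $j$ rather than $n-j$ — i.e. replace the index $j$ in \eqref{multiplication-formula-infinity} by $n - j$, which converts the range $-m \leq j \leq n$ into $0 \leq j \leq m+n$ and turns $\binom{n+r}{j+m}$ into $\binom{n+r}{m+n-j}$. After this purely cosmetic reindexing, the first displayed formula \eqref{multiplication-cor-formula} drops out directly, with $g_i(v)$ inheriting the definition of $g_{i_1,\dots,i_t}(v)$ from the lemma: a linear combination of normal-ordered products ${}^{\circ}_{\circ} u_{q_1}\cdots u_{q_i}{}^{\circ}_{\circ} v$ with at least one $q$ nonnegative. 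The interpretive sentence about ordered partitions of $j$ into $i$ nonnegative parts for $0 \le j \le m+n \le 2n$ is then just a restatement of the structure of the inner sum.

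Next I would handle the vanishing claims for $g_1(v)$. When $i = 1$, there is no normal ordering issue — ${}^{\circ}_{\circ}u_q{}^{\circ}_{\circ}v = u_q v$ — so $g_1(v)$ is literally the sum of the "negative-$j$" terms that were discarded when we restricted $k_1 = p_1$ to $\mathbb{N}$. Concretely, going back to the second displayed computation in the proof of Lemma \ref{multiplication-lemma} before the restriction to $\mathbb{N}$, the full sum over $k_1 \in \mathbb{Z}$ with $k_1 = n + m - j$ includes terms with $k_1 < 0$; but the binomial coefficient $\binom{k_1 + t - 1}{t - 1}$ vanishes for $-t < k_1 < 0$, so the only surviving discarded terms have $k_1 \le -t$, i.e. $u_{k_1 - t + \cdots}$-type modes that are in fact $u_{\text{positive}}$, matching the stated closed form after the substitution $k_1 \rightsquigarrow j$. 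If additionally $u_{k-t}v = 0$ for all $k \in \mathbb{Z}_+$, every such discarded term kills $v$, so $g_1(v) = 0$; and if $n + \mathrm{wt}\, u + t - 1 \ge 0$ while $\mathrm{wt}\, u + t < 2$, then the outer binomial $\binom{n + \mathrm{wt}\,u + t - 1}{m+n-j}$ already forces $j \ge 0$ (since $m + n - j \le n + \mathrm{wt}\,u+t-1 \le n$ would need $j \ge m + 1 - (\mathrm{wt}\,u+t-1) \ge m \ge 0$; one checks the edge cases $\mathrm{wt}\,u+t \in \{0,1\}$ directly), so no negative-$j$ terms appear at all and $g_1(v) = 0$. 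Finally, for $n \ge \mathrm{wt}\,u + t \ge 2$, setting $k = \mathrm{wt}\,u + t$ I would read off directly from the unrestricted sum which $(m,j)$ pairs with $j < 0$ give a nonzero outer binomial: the condition $\binom{n+k-1}{m+n-j} \ne 0$ together with $m \le n$ forces $m \le k - 2$ and $j \ge m - k + 1$, yielding exactly the finite sum $\sum_{m=0}^{k-2}\sum_{j=-1}^{m-k+1}$; dropping the constraint $n \ge k$ and letting $j$ run to $-\infty$ gives the general formula (harmless since all but finitely many terms vanish by the binomial).

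The main obstacle I anticipate is purely bookkeeping rather than conceptual: getting the index shift $j \mapsto n - j$ (and the corresponding collapse of the two-index $(m,j)$ ranges in Lemma \ref{multiplication-lemma} into the $(m,j)$ ranges here) exactly right, including the off-by-one behavior at the boundaries $j = -m$ versus $j = m+n$, and then being careful that the "$g$" remainder here really is the same object as in the lemma (in particular that no partition-type terms are silently absorbed or lost in the $i=1$ reduction). For the $g_1(v)$ closed forms, the only subtlety is confirming that the vanishing of $\binom{k_1+t-1}{t-1}$ on $-t < k_1 < 0$ exactly accounts for the gap between "all $k_1 \in \mathbb{Z}$" and "$k_1 \in \mathbb{N}$", so that the discarded mass is precisely the claimed tail sum and nothing more; this is a short check but must be stated cleanly. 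Everything else follows by specialization and reindexing, invoking Lemma \ref{multiplication-lemma} as a black box.
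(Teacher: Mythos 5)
Your proposal follows essentially the same route as the paper: specialize Lemma \ref{multiplication-lemma} to the single block $i_t=i$, reindex $j\mapsto n-j$, and for $i=1$ identify $g_1(v)$ as the discarded $j\le -1$ tail of the unrestricted sum, with the vanishing claims read off from the binomial coefficients exactly as in the paper's proof. The only blemish is a small arithmetic slip in your bound $j\ge m+1-(\mathrm{wt}\,u+t-1)$ (the correct constraint from $\binom{n+\mathrm{wt}\,u+t-1}{m+n-j}\ne 0$ is $j\ge m-(\mathrm{wt}\,u+t-1)$), which does not affect the conclusion since $\mathrm{wt}\,u+t\le 1$ still forces $j\ge m\ge 0$.
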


\begin{proof} From Eqn.\  \eqref{multiplication-formula-infinity} applied in this case,  so that $i_j  = 0$ for $1\leq j <t$, $i_t = i$, $p_s=\sum_{j=1}^{s}i_j = 0$ if $0\leq s <t$ and $p_t = i$,   and $r=\sum_{j=1}^{t}(\text{wt }u+j-1)i_j =( \text{wt }u + t -1)i$, we have that for any $u,v \in V$, $t \in \mathbb{Z}_+$, and  $i \in \mathbb{N}$
\begin{multline*}
u_{-t}^{i}{\bf{1}}*_nv = \sum_{m=0}^n \sum_{j=-m}^n (-1)^m\binom{m+n}{n}\binom{n + i (\mathrm{wt}\, u + t - 1) }{j+m} \\
\displaystyle\sum_{\substack{k_1,\dots ,k_{i}\in \mathbb{N}\\k_1+\cdots +k_{i}=n-j}} \! \! {}_{\circ}^{\circ}
\prod_{l=1}^{i} \binom{ k_l  +  t - 1}{t-1} u_{-k_{l}-t} {}_{\circ}^{\circ} v +g_{i}(v), 
\end{multline*}
where  $g_{i}(v)$ is a linear combination of elements of the form ${}_{\circ}^{\circ}u_{q_1}\cdots u_{q_{i}} {}_{\circ}^{\circ}v$ where at least one of the  $q$'s is nonnegative, and the formula follows by shifting $j$ to  $n - j$.

To prove the last statement of the Corollary, we note that  if $i = 1$, then Eqn.\ (\ref{multiplication-cor-formula}) becomes 
\begin{eqnarray*}
u_{-t} \mathbf{1} *_n v &= &  \sum_{m=0}^n \sum_{j =0}^{m+n}   (-1)^m \binom{m+n}{n}  \binom{n + \mathrm{wt}\, u + t - 1 }{m + n - j}  \binom{j +  t - 1}{t-1}  u_{-j -t}   \, v 
 +  g_1(v) ,
\end{eqnarray*}
whereas from  the first equation in  the proof of Lemma \ref{multiplication-lemma}, or computing directly from the definition of $*_n$, we have 
\begin{eqnarray*}
\lefteqn{  u_{-t} {\bf{1}}*_n v}\\
&=& \sum_{m=0}^n\sum_{j=-m}^{\infty}(-1)^m\binom{m+n}{n}\binom{n+ \text{wt }u + t -1 }{j+m}\binom{n-j + t-1}{t-1} u_{j - n - t}  v\\
&=& \sum_{m=0}^n\sum_{j=m+n}^{-\infty}(-1)^m\binom{m+n}{n}\binom{n+ \text{wt }u + t -1 }{m + n - j }\binom{j  + t-1}{t-1} u_{-j  - t}  v \\
&=&   \sum_{m=0}^n \sum_{j =0}^{m+n}   (-1)^m \binom{m+n}{n}  \binom{n + \mathrm{wt}\, u + t - 1 }{m + n - j}  \binom{j +  t - 1}{t-1}  u_{-j -t}   \, v \\
& & \quad +  \sum_{m=0}^n\sum_{j=-1}^{-\infty}(-1)^m\binom{m+n}{n}\binom{n+ \text{wt }u + t -1 }{m + n - j }\binom{j  + t-1}{t-1} u_{-j  - t}  v ,
\end{eqnarray*}
where the last line is $g_1(v)$.  Thus if $u_{k-t}v = 0$ for $k\in \mathbb{Z}_+$, then $g_1(v) = 0$.  But in addition, the middle binomial term in $g_1(v)$ is zero if $n + \wt u + t - 1 \geq 0$ and $m+n - j > n + \wt u + t -1$.  Thus if $n + \wt u + t - 1 \geq 0$ and $\wt u + t <2$, it also follows that $g_1(v)  = 0$.  If $n \geq \wt u + t \geq 2$, then letting $k = \mathrm{wt} \, u + t$, we have 
\[g_1(v) =   \sum_{m=0}^{k - 2} \sum_{j=-1}^{m -k + 1}(-1)^m\binom{m+n}{n}\binom{n+ k -1 }{m + n - j }\binom{j  + t-1}{t-1} u_{-j  - t}  v . \]
This proves the last statement of the Corollary.
\end{proof}

Using this Corollary applied to the case that $v$ is a particular type of element in $A_n^u$, multiplied on the left by $u_{-1}^{i_1} {\bf 1}$, we have the following proposition:

\begin{prop}\label{reduce-prop}
Let $V$ be a vertex operator algebra.  Fix $n \in \mathbb{Z}_+$, and suppose there is a $u \in V$ with $\mathrm{wt} \, u \geq -n$, and such that  $V^u = \langle u \rangle^1$ satisfies the permutation property.  Then for $i_1, i_2, \dots, i_{2n} \in \mathbb{N}$, 
\begin{multline}\label{reduce1-prop}
 \binom{i_1 \mathrm{wt} \, u +n}{n} \sum_{m=0}^n (-1)^m \binom{i_1 \mathrm{wt} \, u}{m}   u_{-2n}^{i_{2n}} u_{-2 n + 1}^{i_{2n -1}} \cdots u_{ - 2}^{i_2 } u_{-1}^{i_1}  \mathbf{1} \\
\equiv_n u_{-1}^{i_1} \mathbf{1} *_n u_{-2n}^{i_{2n}} u_{-2 n + 1}^{i_{2n -1}} \cdots u_{ - 2}^{i_2 }\mathbf{1} + w',
\end{multline}
where $w'$ is a linear combination of terms of the form $u_{-2n}^{i_{2n}'} \cdots u_{-1}^{i_1'} \mathbf{1}$ where either $ i_1' + \cdots + i_{2n}' < i_1 + \cdots + i_{2n}$, or $ i_1' + \cdots + i_{2n}' = i_1 + \cdots + i_{2n}$ and  $i_1'< i_1$.    

Furthermore, the coefficient of the term on the lefthand side of Eqn.\ (\ref{reduce1-prop}) is zero if and only if $0< | i_1 \mathrm{wt} \, u | \leq n$, and is equal to 
\begin{equation}\label{reduce2-prop}
(-1)^n  \binom{i_1 \mathrm{wt} \, u +n}{n} \binom{i_1 \mathrm{wt} \, u  -1}{n}
\end{equation} 
if $i_1 \mathrm{wt} \, u >n$.
\end{prop}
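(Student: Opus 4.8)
The plan is to derive Eqn.~\eqref{reduce1-prop} by specializing Corollary~\ref{multiplication-cor-special-case-more-general} to the case $t=1$, $i = i_1$, and $v = u_{-2n}^{i_{2n}} u_{-2n+1}^{i_{2n-1}} \cdots u_{-2}^{i_2} \mathbf{1}$, and then isolating the single ``top'' term we want. First I would note that since $t=1$, the inner partition factor $\binom{p_l + t - 1}{t-1} = \binom{p_l}{0} = 1$, so the multiplication formula collapses considerably: $u_{-1}^{i_1}\mathbf{1} *_n v$ becomes a sum over $m$ and over ordered partitions $\mathbf{p} = (p_1,\dots,p_{i_1})$ of $j$ (with $0 \le j \le m+n \le 2n$) of terms $u_{-p_1-1}\cdots u_{-p_{i_1}-1}\, v$, with coefficients built from $\binom{m+n}{n}\binom{n + i_1\,\mathrm{wt}\,u}{m+n-j}$. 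Since $V^u$ satisfies the permutation property, each such term is, modulo $F_{i_1 - 1}(v)$ (which feeds into the lower-order collection $w'$), equal to $u_{-1-p_{\sigma(1)}}\cdots$ reordered into nonincreasing (or any fixed) order; and then applying Proposition~\ref{recursion-n}/Theorem~\ref{reducing-general-generators-thm} pushes every mode with index $\le -2n-2$ back down into the generating range, again landing in $w'$. The only way to produce the monomial $u_{-2n}^{i_{2n}}\cdots u_{-2}^{i_2} u_{-1}^{i_1}\mathbf{1}$ with exactly $i_1$ factors of $u_{-1}$ and \emph{no} lower-order corrections is to take the partition $\mathbf{p} = (0,0,\dots,0)$, i.e.\ $j = 0$; every other partition either has some $p_l \ge 1$, which either raises the mode index above $-2$ (contributing a genuinely lower-degree-in-$u_{-1}$ term, hence into $w'$) or, after reordering/recursion, lands in $w'$.

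Next I would collect the coefficient of that $j=0$ term. From the formula, the $j=0$ contribution across all $m$ is
\[
\sum_{m=0}^n (-1)^m \binom{m+n}{n} \binom{n + i_1\,\mathrm{wt}\,u}{m+n} \; u_{-1}^{i_1}\, v .
\]
Rewriting $\binom{n+i_1\mathrm{wt}\,u}{m+n}$ using the identity $\binom{n+a}{n+m} = \binom{n+a}{n}\cdot\frac{(n)!\,(a)!}{(n+m)!(a-m)!}\cdot\frac{1}{1} = \binom{n+a}{n}\binom{a}{m}\cdot\frac{n!\,(a-m)!\,m!}{(n+m)!\,(a-m)!} $ — more cleanly, via $\binom{n+a}{m+n}\binom{m+n}{n} = \binom{n+a}{n}\binom{a}{m}$ with $a = i_1\,\mathrm{wt}\,u$ — the coefficient becomes
\[
\binom{n + i_1\,\mathrm{wt}\,u}{n} \sum_{m=0}^n (-1)^m \binom{i_1\,\mathrm{wt}\,u}{m},
\]
which is exactly the left-hand side of Eqn.~\eqref{reduce1-prop}. (I should double-check the elementary binomial identity $\binom{N}{m+n}\binom{m+n}{n} = \binom{N}{n}\binom{N-n}{m}$ — it is the standard subset-of-a-subset identity — and that the range $0\le m\le n$ is the correct one given $0 \le j \le m+n$.) That establishes \eqref{reduce1-prop}, with $w'$ absorbing: (i) the $g_{i_1}(v)$ term from the Corollary, (ii) all $j \ge 1$ terms after permutation/recursion, and (iii) all $F$-corrections from applying the permutation property; in each case one checks the resulting monomials have smaller total degree, or equal total degree with strictly fewer $u_{-1}$'s.

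For the ``Furthermore'' part, I would evaluate $S := \sum_{m=0}^n (-1)^m \binom{a}{m}$ with $a = i_1\,\mathrm{wt}\,u \in \mathbb{Z}$, via the standard partial-alternating-sum identity $\sum_{m=0}^n (-1)^m \binom{a}{m} = (-1)^n \binom{a-1}{n}$. When $a > n > 0$: $\binom{a-1}{n} \ne 0$ and $\binom{a+n}{n} \ne 0$, giving the full coefficient $(-1)^n \binom{i_1\mathrm{wt}\,u + n}{n}\binom{i_1\mathrm{wt}\,u - 1}{n}$, which is \eqref{reduce2-prop}. For the vanishing criterion: the product $\binom{a+n}{n}\binom{a-1}{n}$ is zero precisely when one factor is zero; $\binom{a+n}{n} = \frac{(a+n)(a+n-1)\cdots(a+1)}{n!}$ vanishes iff one of $a+1,\dots,a+n$ is zero, i.e.\ $-n \le a \le -1$; and $\binom{a-1}{n} = \frac{(a-1)(a-2)\cdots(a-n)}{n!}$ vanishes iff one of $a-1,\dots,a-n$ is zero, i.e.\ $1 \le a \le n$. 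Together these give $0 < |a| \le n$, i.e.\ $0 < |i_1\,\mathrm{wt}\,u| \le n$; and for $a = 0$ (e.g.\ $i_1 = 0$ or $\mathrm{wt}\,u = 0$) the coefficient is $\binom{n}{n}\cdot 1 \ne 0$, consistent with the ``if and only if'' excluding $a = 0$.

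The main obstacle I anticipate is the bookkeeping in the first paragraph — namely verifying carefully that \emph{every} term of the multiplication formula other than the $j=0$, all-zero-partition term really does land in $w'$ with the claimed degree bound. One must track three separate reduction mechanisms (the explicit $g_{i_1}(v)$, the recursion of Proposition~\ref{recursion-n} pushing modes $u_{-k}$ with $k > 2n+1$ back into range, and the $F_{m-1}$-corrections from the permutation property), confirm that none of them can regenerate a clean $u_{-2n}^{i_{2n}}\cdots u_{-2}^{i_2}u_{-1}^{i_1}\mathbf{1}$ with the top number of $u_{-1}$'s, and organize the ``total degree, then number of $u_{-1}$'s'' partial order so the induction is clean. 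The binomial identities themselves are routine once correctly stated.
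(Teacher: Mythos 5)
Your proposal is correct and follows essentially the same route as the paper: specialize Corollary \ref{multiplication-cor-special-case-more-general} to $t=1$, $i=i_1$, isolate the $j=0$ all-zeros partition as the unique source of the top term, absorb everything else (the $g_{i_1}(v)$ piece, the reordering corrections, and the out-of-range modes handled via Proposition \ref{recursion-n} and Eqn.\ \eqref{reduce-equation}) into $w'$, and extract the coefficient via $\binom{m+n}{n}\binom{n+a}{m+n}=\binom{n+a}{n}\binom{a}{m}$. Your uniform use of $\sum_{m=0}^n(-1)^m\binom{a}{m}=(-1)^n\binom{a-1}{n}$ for all integers $a$ is a mild streamlining of the paper's four-way case analysis of the vanishing criterion, but it is the same computation.
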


\begin{proof}
We apply Corollary  \ref{multiplication-cor-special-case-more-general} to the case when 
$v = u_{-2n}^{i_{2n}} u_{-2 n + 1}^{i_{2n -1}} \cdots u_{ - 2}^{i_2 }\mathbf{1}  \in  F_{i_2 + \cdots i_{2n}} ({\bf 1}) \subset V^u$,  $t = 1$, and $i = i_1$. Thus Eqn.\  \eqref{multiplication-cor-formula} holds with $g_{i_1} (v) \in F_{i_1 + i_2 + \cdots i_{2n} -1} ( {\bf 1})$, and the only term in $u_{-1}^{i_1} \mathbf{1} *_n v$ that is not in  $F_{i_1 + \cdots + i_{2n}  -1}({\bf 1})$ and involves $u_{-2n}^{j_{2n}} u_{-2 n + 1}^{j_{2n -1}} \cdots u_{ - 1}^{j_1}\mathbf{1}$ with $j_1 \geq i_1$ is the one corresponding to the partition of $j = 0$ given by $\mathbf{p} = (0,0,\dots, 0)$.  That is, we have

\begin{eqnarray*}
\lefteqn{u_{-1}^{i_1} \mathbf{1} *_n u_{-2n}^{i_{2n}} u_{-2 n + 1}^{i_{2n -1}} \cdots u_{ - 2}^{i_2 }\mathbf{1}}\\
&=&  \sum_{m=0}^n  (-1)^m \binom{m+n}{n}  \binom{n + i_1 \mathrm{wt}\, u }{m + n }  u_{-1}^{i_1}  \, ( u_{-2n}^{i_{2n}} u_{-2 n + 1}^{i_{2n -1}} \cdots u_{ - 2}^{i_2 }\mathbf{1})  \\
& & \quad +  \sum_{m=0}^n \sum_{j =1}^{m+n}  \ \ \sum_{\stackrel{p_1, p_2, \dots, p_{i_1} \in \mathbb{N}}{p_1 + p_2 + \cdots + p_{i_1} =  j} } (-1)^m \binom{m+n}{n}  \binom{n + i_1 \mathrm{wt}\, u}{m + n - j}  u_{-p_1-1} u_{-p_2-1}  \cdots \\
& & \quad \cdots u_{-p_{i_1}-1}  \, ( u_{-2n}^{i_{2n}} u_{-2 n + 1}^{i_{2n -1}} \cdots u_{ - 2}^{i_2 }\mathbf{1}) + f_{i_1 + \cdots i_{2n} -1}({\bf 1}) \\
&=&  \sum_{m=0}^n  (-1)^m \binom{m+n}{n}  \binom{n + i_1 \mathrm{wt}\, u}{m + n }  u_{-2n}^{i_{2n}} u_{-2 n + 1}^{i_{2n -1}} \cdots u_{ - 2}^{i_2 } u_{-1}^{i_1}  \mathbf{1} +  f'_{i_1 + i_2 + \cdots + i_{2n} - 1}( \bf 1) \\
& & \quad +  \sum_{m=0}^n \sum_{j =1}^{m+n}  \ \ \sum_{\stackrel{p_1, p_2, \dots, p_{i_1} \in \mathbb{N}}{p_1 + p_2 + \cdots + p_{i_1} =  j} } (-1)^m \binom{m+n}{n}  \binom{n + i_1 \mathrm{wt}\, u }{m + n - j}  u_{-p_1-1} u_{-p_2-1}  \cdots \\
& & \quad \cdots u_{-p_{i_1}-1}  \, ( u_{-2n}^{i_{2n}} u_{-2 n + 1}^{i_{2n -1}} \cdots u_{ - 2}^{i_2 }\mathbf{1}) +   f_{i_1 + i_2 + \cdots + i_{2n} - 1}( \bf 1)
\end{eqnarray*}
for some  $f_{i_1+ \cdots + i_{2n} -1}({\bf 1}),  f'_{i_1 + i_2 + \cdots + i_{2n} - 1}( \bf 1) \in F_{i_1 + \cdots + i_{2n} -1}({\bf 1})$.  Therefore
\begin{eqnarray*}
\lefteqn{\sum_{m=0}^n  (-1)^m \binom{m+n}{n}  \binom{n+ i_1 \mathrm{wt}\, u}{m + n }  u_{-2n}^{i_{2n}} u_{-2 n + 1}^{i_{2n -1}} \cdots u_{ - 2}^{i_2 } u_{-1}^{i_1}  \mathbf{1} }\\
&=& u_{-1}^{i_1} \mathbf{1} *_n u_{-2n}^{i_{2n}} u_{-2 n + 1}^{i_{2n -1}} \cdots u_{ - 2}^{i_2 }\mathbf{1} \\
& & \quad - \sum_{m=0}^n \sum_{j =1}^{m+n}  \ \ \sum_{\stackrel{p_1, p_2, \dots, p_{i_1} \in \mathbb{N}}{p_1 + p_2 + \cdots + p_{i_1} =  j} } (-1)^m \binom{m+n}{n}  \binom{n+ i_1 \mathrm{wt}\, u}{m + n - j}  u_{-p_1-1} u_{-p_2-1}  \cdots \\
& & \quad \cdots u_{-p_{i_1}-1}  \, ( u_{-2n}^{i_{2n}} u_{-2 n + 1}^{i_{2n -1}} \cdots u_{ - 2}^{i_2 }\mathbf{1}) + h_{i_1+ \cdots + i_{2n}-1}({\bf 1}),\\
\end{eqnarray*}  
where $h_{i_1+ \cdots + i_{2n}-1}({\bf 1}) \in F_{i_1+ \cdots + i_{2n}-1}({\bf 1})$.

Using the permutation property on the terms in the sum involving ordered partitions, and breaking out the partitions in the sum over ordered partitions that involve a $u_{-2n-1}$ term (namely those with $m = n$, $j = 2n$, and $\mathbf{p} = (p_1, p_2, \dots, p_{i_1})$ consisting of $i_1-1$ zeros, and one $2n$), we have
\begin{multline*}
\sum_{m=0}^n  (-1)^m \binom{m+n}{n}  \binom{n+ i_1 \mathrm{wt}\, u}{m + n }  u_{-2n}^{i_{2n}} u_{-2 n + 1}^{i_{2n -1}} \cdots u_{ - 2}^{i_2 } u_{-1}^{i_1}  \mathbf{1} \\
= u_{-1}^{i_1} \mathbf{1} *_n u_{-2n}^{i_{2n}} u_{-2 n + 1}^{i_{2n -1}} \cdots u_{ - 2}^{i_2 }\mathbf{1} - (-1)^n  i_1 \frac{(2n)!}{n!n!} u_{-2n-1} u_{-2n}^{i_{2n}} u_{-2 n + 1}^{i_{2n -1}} \cdots u_{ - 2}^{i_2 } u_{-1}^{i_1 - 1} \mathbf{1}  \\
+ w + h'_{i_1 + i_2 + \cdots  + i_{2n} - 1}(\mathbf{1}) ,
\end{multline*}
for some  $h'_{i_1 + i_2 + \cdots  + i_{2n} - 1}(\mathbf{1}) \in F_{i_1 + i_2 + \cdots + i_{2n} - 1}(\mathbf{1})$, and 
where $w$ is a linear combination of terms of the form $u_{-2n}^{i_{2n}'} \cdots u_{-1}^{i_1'} \mathbf{1}$ where $ i_1' + \cdots + i_{2n}' \leq i_1 + \cdots + i_{2n}$ but  $i_1'< i_1$.

Since $\mathrm{wt} \, u \geq -n$, we can use Proposition \ref{recursion-n} to replace any terms which occur in $h'_{i_1+ \cdots + i_{2n}-1}({\bf 1})$ involving $u_{-k}$ for $k \geq 2n+2$ with terms involving sums of $u_{-j}$ for $1 \leq j \leq 2n+1$, and thus replace $h'_{i_1+ \cdots + i_{2n}-1}({\bf 1})$ with $h''_{i_1+ \cdots + i_{2n}-1}({\bf 1}) \in F_{i_1+ \cdots + i_{2n}-1}({\bf 1})$ which only involves $u_{-k}$ for $1 \leq k \leq 2n + 1$.

Using Theorem \ref{reducing-general-generators-thm}, and in particular repeatedly using Eqn.\  (\ref{reduce-equation}),  we can rewrite the terms involving $u_{-2n-1}$ as linear combinations of terms involving only $u_{-k}$ for $1\leq k \leq 2n$ while not introducing any higher powers of $u_{-1}$ except for possibly in lower degree terms, meaning terms in $F_r (\mathbf{1})$ for $r< i_1 + \cdots + i_{2n}$.   Therefore we have
\begin{multline}\label{reduce1}
\sum_{m=0}^n  (-1)^m \binom{m+n}{n}  \binom{n + i_1 \mathrm{wt}\, u}{m + n }  u_{-2n}^{i_{2n}} u_{-2 n + 1}^{i_{2n -1}} \cdots u_{ - 2}^{i_2 } u_{-1}^{i_1}  \mathbf{1} \\
\equiv_n u_{-1}^{i_1} \mathbf{1} *_n u_{-2n}^{i_{2n}} u_{-2 n + 1}^{i_{2n -1}} \cdots u_{ - 2}^{i_2 }\mathbf{1} + w',
\end{multline}
where $w'$ is a linear combination of terms of the form $u_{-2n}^{i_{2n}'} \cdots u_{-1}^{i_1'} \mathbf{1}$ where either $ i_1' + \cdots + i_{2n}' < i_1 + \cdots + i_{2n}$, or $ i_1' + \cdots + i_{2n}' = i_1 + \cdots + i_{2n}$ and  $i_1'< i_1$. 

To prove that the coefficient on the  lefthand side of Eqn.\ (\ref{reduce1}) is that given as in Eqn.\  (\ref{reduce1-prop}), we let $j = i_1 \mathrm{wt} \, u$ and observe that
\begin{eqnarray}\label{coefficient-eqn}
C_j &=& \sum_{m=0}^n  (-1)^m \binom{m+n}{n}  \binom{j + n}{m + n }  =   \sum_{m=0}^n (-1)^m \frac{(m+n)!}{n!m!} \frac{(j + n)!}{(m+n)!(j-m)!} \\
&=&  \sum_{m=0}^n (-1)^m \frac{(j+n)!}{n!m!(j-m)!} =  \frac{(j+n)!}{n!j!} \sum_{m=0}^n (-1)^m \frac{j!}{m!(j-m)!} \nonumber \\
&=& \binom{j+n}{n} \sum_{m=0}^n (-1)^m \binom{j}{m} \nonumber .
\end{eqnarray}

Thus, if  $0< j \leq n$
\begin{eqnarray*}
C_j = \binom{j+n}{n}    \sum_{m=0}^n (-1)^m \binom{j}{m} x^m \Big|_{x = 1} = \binom{j+n}{n}  (1-x)^j |_{x=1} = 0.
\end{eqnarray*}
If $-n\leq j  <0$, $\binom{j+n}{n} = 0$, and thus $C_j = 0$.   
If $j >n$, then it follows from
\[ \binom{j}{m} = \binom{j-1}{m} + \binom{j-1}{m-1}\]
and induction on $j$ that 
\[ \sum_{m=0}^n (-1)^m \binom{j}{m} = (-1)^n \binom{j-1}{n} \]
which gives 
\[
C_j  =  (-1)^n \binom{j+n}{n}  \binom{j-1}{n} \neq 0 \]
in this case, proving Eqn.\ (\ref{reduce2-prop}).

Finally if   $j<-n$, then using the fact that $\binom{j}{m} = (-1)^m \binom{-j+m-1}{m}$, we have 
\begin{eqnarray*}
C_j &=& \binom{j+n}{n}  \sum_{m=0}^n (-1)^m \binom{j}{m} \ = \  \binom{j+n}{n}  \sum_{m=0}^n  \binom{-j+m -1}{m}
\end{eqnarray*} 
which is always nonzero since $\binom{j+ n}{n} \neq 0$, and the summation consists only of positive terms when  $j<-n$, proving the last statement of the Proposition. 
\end{proof}

Proposition \ref{reduce-prop} allows us to reduce the generators further.  In particular, we  have the following refinement of Theorem \ref{reducing-general-generators-thm}:

\begin{thm}\label{last-generator-theorem}
Let $V$ be a vertex operator algebra.  Fix $n \in \mathbb{Z}_+$, and suppose there is an element $u \in V$ with $\mathrm{wt} \, u \geq -n$,  and such that  the vertex subalgebra $V^u = \langle u \rangle^1 \subset V$ satisfies the permutation property.  Let $A_n^u$ be the subalgebra of $A_n(V)$ generated by elements in $V^u$.  Then $A_n^u$ is generated by elements of the form 
\[ u_{-2n}^{i_{2n}} u_{-2 n + 1}^{i_{2n -1}} \cdots u_{ - 1}^{i_1}\mathbf{1}+ O_n(V) ,  \mbox{and} \ u_{-1}^{j} \mathbf{1} + O_n(V),\]
for $i_1, i_2, \dots, i_{2n} \in \mathbb{N}$, with  $-n \leq  i_1 \mathrm{wt} \, u   \leq n$, and $j \in \mathbb{Z}_+$.  
\end{thm}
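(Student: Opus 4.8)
The plan is to start from the generating set established in Theorem~\ref{reducing-general-generators-thm}, namely the elements $u_{-2n}^{i_{2n}} u_{-2n+1}^{i_{2n-1}} \cdots u_{-1}^{i_1}\mathbf{1} + O_n(V)$ with $i_1,\dots,i_{2n}\in\mathbb{N}$ arbitrary, and show that those with $i_1\mathrm{wt}\,u > n$ are redundant, being expressible in terms of products of lower-complexity generators together with the special elements $u_{-1}^j\mathbf{1}+O_n(V)$, $j\in\mathbb{Z}_+$. The key tool is Proposition~\ref{reduce-prop}: for $i_1,\dots,i_{2n}\in\mathbb{N}$ with $i_1\mathrm{wt}\,u > n$, the coefficient $(-1)^n\binom{i_1\mathrm{wt}\,u+n}{n}\binom{i_1\mathrm{wt}\,u-1}{n}$ on the left-hand side of \eqref{reduce1-prop} is nonzero, so that
\[
u_{-2n}^{i_{2n}} u_{-2n+1}^{i_{2n-1}} \cdots u_{-2}^{i_2} u_{-1}^{i_1}\mathbf{1}
\equiv_n \frac{1}{C}\Bigl( u_{-1}^{i_1}\mathbf{1} *_n u_{-2n}^{i_{2n}}\cdots u_{-2}^{i_2}\mathbf{1} + w'\Bigr),
\]
where $C$ is that nonzero constant and $w'$ is a linear combination of terms $u_{-2n}^{i_{2n}'}\cdots u_{-1}^{i_1'}\mathbf{1}$ with either strictly smaller total degree $\sum i_j' < \sum i_j$, or equal total degree but $i_1' < i_1$.

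I would then set up an induction on the pair $(d, i_1)$ ordered lexicographically, where $d = i_1 + i_2 + \cdots + i_{2n}$ is the total degree; the claim to be proved by induction is that every generator $u_{-2n}^{i_{2n}}\cdots u_{-1}^{i_1}\mathbf{1}+O_n(V)$ lies in the subalgebra generated by the asserted reduced set (the degree-restricted monomials with $-n\le i_1\mathrm{wt}\,u\le n$ together with all $u_{-1}^j\mathbf{1}+O_n(V)$). In the inductive step, if $i_1\mathrm{wt}\,u\le n$ there is nothing to do. If $i_1\mathrm{wt}\,u > n$, apply the displayed identity: the first term on the right is a $*_n$-product of $u_{-1}^{i_1}\mathbf{1}$ (which is in the generating set, or $\mathbf{1}$ itself when $i_1=0$, though that case cannot occur here since $i_1\mathrm{wt}\,u>n\ge 0$ forces $i_1>0$) with an element $u_{-2n}^{i_{2n}}\cdots u_{-2}^{i_2}\mathbf{1}$ that has $i_1$-exponent $0$ hence trivially satisfies the restriction, and is handled by the inductive hypothesis in its own right (its total degree is $d - i_1 < d$, so it is a polynomial in the generators); and the correction term $w'$ consists of monomials strictly smaller in the lexicographic order $(d,i_1)$, so it too is covered by the inductive hypothesis. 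Hence $u_{-2n}^{i_{2n}}\cdots u_{-1}^{i_1}\mathbf{1}+O_n(V)$ lies in the subalgebra generated by the reduced set, completing the induction.

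One technical point to address carefully: Theorem~\ref{reducing-general-generators-thm} requires $V^u$ to act strongly on some $W$, but the present theorem only assumes the permutation property; here we take $W = \mathbb{C}\mathbf{1}$, and one must check (or cite from the structure of $V^u = \langle u\rangle^1$) that $V^u$ acts strongly on $\mathbf{1}$, i.e.\ that $u_j\mathbf{1}=0$ for $j\ge 0$ and the permutation property holds on $\mathbf{1}$ — the former is immediate from the creation/truncation properties of vertex algebras and the latter follows from the hypothesis. I expect the main obstacle to be bookkeeping: ensuring that the reduction in the $w'$ term and in the $*_n$-product really does stay within the lexicographic order and within $A_n^u$, in particular that repeated application of \eqref{reduce-equation} and Proposition~\ref{recursion-n} (invoked inside the proof of Proposition~\ref{reduce-prop}) does not surreptitiously reintroduce a monomial with $i_1'\mathrm{wt}\,u > n$ at the same total degree — this is exactly the content of the ``$i_1'<i_1$ at equal degree'' clause, so it must be tracked scrupulously through the induction, but no genuinely new computation beyond Proposition~\ref{reduce-prop} is needed.
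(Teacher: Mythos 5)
Your proposal follows essentially the same route as the paper: start from the generating set of Theorem \ref{reducing-general-generators-thm}, invoke Proposition \ref{reduce-prop} to express the offending monomials as $\frac{1}{C}\bigl(u_{-1}^{i_1}\mathbf{1} *_n u_{-2n}^{i_{2n}}\cdots u_{-2}^{i_2}\mathbf{1} + w'\bigr)$, and run an induction on the lexicographic order $(d, i_1)$ with $d$ the total degree --- which is exactly the ordering \eqref{ordering} used in the paper, where the same argument is organized as an outer induction on total degree and an inner induction on $i_1$. Your observation that the second factor $u_{-2n}^{i_{2n}}\cdots u_{-2}^{i_2}\mathbf{1}$ has $i_1$-exponent zero and hence already satisfies the constraint is also how the paper closes the loop.

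There is one genuine omission: you only reduce the monomials with $i_1\,\mathrm{wt}\,u > n$ and assert that ``if $i_1\,\mathrm{wt}\,u \le n$ there is nothing to do.'' Since the hypothesis allows $\mathrm{wt}\,u$ to be negative (only $\mathrm{wt}\,u \ge -n$ is assumed), one can have $i_1\,\mathrm{wt}\,u < -n$; such monomials are excluded from the asserted generating set, so they too must be reduced, and your dichotomy leaves them untouched. The repair costs nothing: Proposition \ref{reduce-prop} states that the coefficient on the left of \eqref{reduce1-prop} vanishes if and only if $0 < |i_1\,\mathrm{wt}\,u| \le n$, so it is likewise nonzero when $i_1\,\mathrm{wt}\,u < -n$ (the paper explicitly treats both ranges $i_1\,\mathrm{wt}\,u > n$ and $i_1\,\mathrm{wt}\,u < -n$ in the inductive step), and the identical reduction applies. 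With that case added, and with your (correct) remark that $V^u$ acts strongly on $\mathbb{C}\mathbf{1}$ so that Theorem \ref{reducing-general-generators-thm} and Proposition \ref{reduce-prop} are available, the argument is complete.
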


\begin{proof}  By Theorem \ref{reducing-general-generators-thm}, $A_n^u$ is generated by $u^{i_{2n}}_{-2n} u^{i_{2n-1}}_{-2n+1} \cdots u_{-1}^{i_1} \mathbf{1} + O_n(V)$ for $i_1, i_2, \dots, i_{2n} \in \mathbb{N}$.  We proceed by induction on the following lexicographical ordering  of such generators to show that each of these elements $u^{i_{2n}}_{-2n} u^{i_{2n-1}}_{-2n+1} \cdots u_{-1}^{i_1} \mathbf{1} + O_n(V)$ in $A_n^u$ can be generated by the elements prescribed in the statement of the  present theorem.  The lexicographical ordering is as follows:
\begin{multline}\label{ordering}
u^{i_{2n}'}_{-2n} u^{i_{2n-1}'}_{-2n+1} \cdots u_{-1}^{i_1'} \mathbf{1} + O_n(V) <  u^{i_{2n}}_{-2n} u^{i_{2n-1}}_{-2n+1} \cdots u_{-1}^{i_1} \mathbf{1} + O_n(V) \\
 \iff \  \left\{ \begin{array}{l} 
i_{2n}' + i_{2n-1}' + \cdots +  i_1' <  i_{2n} + i_{2n-1} + \cdots + i_1 \\
 \mbox{or}\\ 
i_{2n}' + i_{2n-1}' + \cdots + i_1' =  i_{2n} + i_{2n-1} + \cdots + i_1
\ \mbox{and $i_1'<i_1$.}
\end{array} \right.
\end{multline}

First note that the theorem immediately holds for elements $ u^{i_{2n}}_{-2n} u^{i_{2n-1}}_{-2n+1} \cdots u_{-1}^{i_1} \mathbf{1} + O_n(V) \in A_n^u \subset A_n(V)$ of total degree $i_{2n} + i_{2n-1} + \cdots + i_1 = 0$ or $1$, as well as for any total degree as long as $i_1 = 0$, or $i_{2n} = i_{2n-1} = \cdots = i_2 = 0$.

We make the inductive assumption that the result holds for $u_{-2n}^{i_{2n}'} \cdots  u_{-1}^{i_1'}  \mathbf{1} + O_n(V)$ of total degree $i_{2n}' + \cdots + i_1'$, and we proceed to prove the statement holds for $u_{-2n}^{i_{2n}} \cdots  u_{-1}^{i_1}  \mathbf{1} + O_n(V)$ of total degree  $i_{2n} + \cdots + i_1 = i_{2n}' + \cdots + i_1' + 1$.  By Proposition  \ref{reduce-prop}, the coefficient on the lefthand side of Eqn.\ (\ref{reduce1-prop}) is zero if and only if $0 < | i_1 \mathrm{wt} \, u | \leq n$.  Thus as long as either $i_1 \mathrm{wt} \, u > n$ or 
 $i_1 \mathrm{wt} \, u <-n$, we have
\begin{multline}\label{reduce2}
u_{-2n}^{i_{2n}} u_{-2 n + 1}^{i_{2n -1}} \cdots u_{ - 2}^{i_2 } u_{-1}^{i_1}  \mathbf{1} \\
\equiv_n \left(  \binom{i_1 \mathrm{wt} \, u +n}{n} \sum_{m=0}^n (-1)^m \binom{i_1 \mathrm{wt} \, u}{m}  \right)^{-1} u_{-1}^{i_1} \mathbf{1} *_n u_{-2n}^{i_{2n}} u_{-2 n + 1}^{i_{2n -1}} \cdots u_{ - 2}^{i_2 }\mathbf{1} + w',
\end{multline}
where $w'$ is a linear combination of terms involving only elements of the form $u_{-2n}^{i_{2n}'} \cdots u_{-1}^{i_1'} \mathbf{1}$ where either $ i_1' + \cdots + i_{2n}' < i_1 + \cdots + i_{2n}$, or $ i_1' + \cdots + i_{2n}' = i_1 + \cdots + i_{2n}$ and  $i_1'< i_1$.

 By the inductive assumption, the terms in (\ref{reduce2}) of total degree less than $i_{2n} +  \cdots + i_1$ can be expressed in terms of the generators specified in the statement of the theorem.  For the terms of equal total degree but with $i_1'<i_1$, we induct on $i_1'$.  That is, we must show that the result holds for terms of the form $u_{-2n}^{i_{2n}'} \cdots u_{-1}^{i_1'} \mathbf{1}$ where $ i_1' + \cdots + i_{2n}' = i_1 + \cdots + i_{2n}$ and $i_1' <i_1$.  
  
We consider the  two cases: Case I.    $-n \leq i_1' \mathrm{wt} \, u \leq n$; Case II.  $i_1'  >  | \frac{n}{\mathrm{wt} \, u}|$.

The result holds if  $i_1' \leq  | \frac{n}{\mathrm{wt} \, u}|$, since then  $-n\leq i_1' \mathrm{wt} \, u \leq n$, and so $u_{-2n}^{i_{2n}'} \cdots u_{-1}^{i_1'} \mathbf{1}$ is in the list of generators specified by the theorem.  This includes the base case for our induction on $i_1'$.  We make the inductive assumption that the result holds for $i_1'$ and prove the result for $i_1'+ 1  >  | \frac{n}{\mathrm{wt} \, u}|$. In this case, by  Proposition \ref{reduce-prop},  we have 
\begin{multline}\label{reduce3}
u_{-2n}^{i_{2n}'} u_{-2 n + 1}^{i_{2n -1}'} \cdots u_{ - 2}^{i_2' } u_{-1}^{i_1' + 1}  \mathbf{1} \\
\equiv_n \left( \binom{(i_1'+1) \mathrm{wt} \, u +n}{n} \sum_{m=0}^n (-1)^m \binom{(i_1'+1) \mathrm{wt} \, u}{m}  \right)^{-1} u_{-1}^{i_1' + 1} \mathbf{1} *_n u_{-2n}^{i_{2n}'} u_{-2 n + 1}^{i_{2n -1}'} \cdots u_{ - 2}^{i_2' }\mathbf{1} + w'',
\end{multline}
where $w''$ is a linear combination of terms of the form $u_{-2n}^{i_{2n}''} \cdots u_{-1}^{i_1''} \mathbf{1}$ where either $ i_1'' + \cdots + i_{2n}'' < ( i_1' + 1) + i_2' + \cdots + i_{2n}'$, or $ i_1'' + \cdots + i_{2n}'' = (i_1' + 1) + i_2'  + \cdots + i_{2n}'$ and  $i_1''< i_1'$.  By the inductive assumptions on the total degree, and on $i_1'$, the result follows.  
\end{proof}

\section{On the definition of the higher level Zhu algebras and applications}\label{definition-clarification-section}

In this section, we prove a result that helps motivate the definition of $O_n(V)$ when $n \in \mathbb{Z}_+$, namely that one must define $O_n(V)$ to include all elements of the form $(L(-1) + L(0))v$ for $v \in V$.  In addition, as we note in Remark \ref{OL-remark} below, this theorem also singles out some of the relations that must hold in $A_n(V)$ that are contained in $O^L(V)$, but not necessarily in $O_n^\circ(V)$.  Thus the theorem is useful in determining the structure of $A_n(V)$.  

\begin{thm}\label{clarify-definition-thm}
Suppose that $V$ is a vertex operator algebra that is strongly generated by $u \in V$.  Then  
\[O_n^L(V)\not\subset O_n^\circ(V)\]
as long as $n \in \mathbb{Z}_+$ and one of the following hold:

(i) $1\leq \mathrm{wt} \, u \leq n$;

or 

(ii) $\mathrm{wt} \, u = 0$, and  $u_{-2} \mathbf{1}  \neq 0$.
\end{thm}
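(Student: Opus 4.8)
The plan is to show that $(L(-1)+L(0))u \notin O_n^\circ(V)$ under either hypothesis, which suffices since $(L(-1)+L(0))u \in O^L(V)$ by definition. The key idea is to exploit the grading on $V$: since $V$ is strongly generated by $u$, the vector $(L(-1)+L(0))u$ has a definite weight, namely $\mathrm{wt}\,u + 1$ in case (i) and weight $1$ (coming from $u_{-2}\mathbf{1} = L(-1)u$) in case (ii). So it suffices to analyze the weight-$(\mathrm{wt}\,u+1)$ graded piece of $O_n^\circ(V)$ and show $(L(-1)+L(0))u$ is not in it. First I would set $d = \mathrm{wt}\,u$ and describe the low-weight graded pieces of $V$ explicitly: since $V = \langle u\rangle^1$ and $u$ generates strongly, the weight-$k$ subspace for small $k$ is spanned by monomials $u_{-k_1}\cdots u_{-k_r}\mathbf{1}$ with $k_1,\dots,k_r \geq 1$ and $\sum(k_j) + (r-1)(d-1) = \dots$ giving total weight $k$; in particular, in case (i) with $1 \le d \le n$, the weight-$(d+1)$ space is spanned by $u_{-d-1}\mathbf{1} = $ (a multiple of) $L(-1)u$-type vectors and, if $d=1$, also by $u_{-1}u_{-1}\mathbf{1}$ — I'd handle the precise low-weight basis carefully since the argument is sensitive to exactly which monomials appear.

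The core computation is to determine which weight-$(d+1)$ vectors can arise as $w \circ_n z$ for homogeneous $w, z \in V$. By \eqref{define-circ}, $w \circ_n z = \mathrm{Res}_x \frac{(1+x)^{\mathrm{wt}\,w + n} Y(w,x)z}{x^{2n+2}} = \sum_{i\ge 0}\binom{\mathrm{wt}\,w+n}{i} w_{i - 2n - 2}\, z$, which has weight $\mathrm{wt}\,w + \mathrm{wt}\,z + 2n$ — wait, more carefully, the $i$-th term $w_{i-2n-2}z$ has weight $\mathrm{wt}\,w + \mathrm{wt}\,z + (2n+2) - i - 1 = \mathrm{wt}\,w + \mathrm{wt}\,z + 2n+1 - i$, so homogeneity of $w\circ_n z$ forces us to consider a fixed value of $i$ per weight. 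For the output to have weight $d+1$ with $w,z$ of nonnegative weight, and given that $n \ge 1$, the constraint $\mathrm{wt}\,w + \mathrm{wt}\,z + 2n + 1 - i = d+1$ with $i \geq 0$ and the fact that $w_j z = 0$ for $j$ too large (by the lower-truncation/grading) forces $w$ and $z$ into a very restricted range. Concretely, in case (i) one needs $\mathrm{wt}\,w + \mathrm{wt}\,z \le d + 1$ while $i = \mathrm{wt}\,w + \mathrm{wt}\,z + 2n - d \ge 2n - 1 + (\text{something}) \ge 1$; I would then go through the finitely many possibilities for $(w,z)$ (each a low-weight monomial in $u$) and compute the resulting vector, showing each lands in the span of monomials that does \emph{not} include the $(L(-1)+L(0))u$ direction, or more precisely that the total span $O_n^\circ(V)$ in weight $d+1$ is a proper subspace missing that vector. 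The case (ii), $d=0$, is similar but cleaner: weight-$1$ space is spanned by $u_{-2}\mathbf{1}$ (and possibly $u_{-1}u_{-1}\cdots$ but those have weight $0$), and one checks $w\circ_n z$ of weight $1$ forces $w = z = $ scalars times powers of $u$ of weight $0$, giving only terms that, via the recursion of Proposition \ref{recursion-n} and the explicit form of $\circ_n$, cannot produce $u_{-2}\mathbf{1}$ with a nonzero coefficient when $n \geq 1$.

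The main obstacle I anticipate is the bookkeeping in the case-analysis: one must be certain that \emph{no} combination $w \circ_n z$ — including those where $w$ or $z$ is itself a product of several modes of $u$ — produces a vector with a component along $(L(-1)+L(0))u$, and that linear combinations of such circle products also cannot (i.e. the relevant projection of $O_n^\circ(V)$ is genuinely zero, not just that no single generator hits it). The cleanest way to organize this is probably to fix the weight-$(d+1)$ component and argue by the truncation bound that only $w, z$ with $\mathrm{wt}\,w, \mathrm{wt}\,z$ in a small window contribute, enumerate those (using $V = \langle u \rangle^1$ to list monomial spanning vectors of each small weight), expand each $w\circ_n z$ using the binomial expansion above together with Proposition \ref{recursion-n} to reduce all modes to $u_{-1},\dots,u_{-2n}$ acting on low-weight vectors, and then exhibit an explicit linear functional on the weight-$(d+1)$ space vanishing on all these but not on $(L(-1)+L(0))u$. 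The detailed verification that this functional exists is the crux; everything else is setup. I would also remark, following Remark \ref{first-remark}, why $n \ge 1$ is essential: for $n=0$ the identity $v\circ_0\mathbf{1} = (L(-1)+L(0))v$ puts the vector back into $O_0^\circ(V)$, and the obstruction above disappears precisely because the exponent $2n+2$ drops to $2$.
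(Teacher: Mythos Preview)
Your overall strategy is the same as the paper's, but there is a genuine error in your setup that would derail the execution. You claim that $(L(-1)+L(0))u$ has a definite weight, namely $\mathrm{wt}\,u+1$. This is false in case (i): writing $d=\mathrm{wt}\,u$, we have $(L(-1)+L(0))u = d\,u_{-1}\mathbf{1} + u_{-2}\mathbf{1}$, which has a weight-$d$ component $d\,u$ and a weight-$(d+1)$ component $u_{-2}\mathbf{1}$. So the vector you are trying to exclude from $O_n^\circ(V)$ is \emph{inhomogeneous}, and since $v\circ_n w$ is also inhomogeneous (the term $v_{i-2n-2}w$ has weight depending on $i$), your plan to ``analyze the weight-$(d+1)$ graded piece of $O_n^\circ(V)$ and show $(L(-1)+L(0))u$ is not in it'' does not literally make sense as written.

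The paper repairs this in the simplest possible way: it projects to weight $d$ rather than weight $d+1$. The point is that for $d\geq 1$ the weight-$d$ subspace of $V=\langle u\rangle^1$ is one-dimensional, spanned by $u$ itself (any monomial $u_{-k_1}\cdots u_{-k_m}\mathbf{1}$ with $m\geq 2$ has weight $\geq 2d>d$). So it suffices to show that no $v\circ_n w$ ever has $u_{-1}\mathbf{1}$ as a component. The paper does this not by enumerating finitely many $(v,w)$ as you propose, but by a single uniform inequality: writing $v=u_{-k_1}\cdots u_{-k_m}\mathbf{1}$ and $w=u_{-p_1}\cdots u_{-p_q}\mathbf{1}$, expanding $v\circ_n w$ as a combination of normally ordered products ${}^\circ_\circ u_{j_1}\cdots u_{j_m}{}^\circ_\circ w$, and using the residue to bound $j_1+\cdots+j_m \leq (d-1)m - n - 1$. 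Matching weights forces $(q-1)d + (p_1+\cdots+p_q-q) + 1 \leq -n$, which is impossible for $q\geq 1$ and forces $d\geq n+1$ for $q=0$. No case analysis on low-weight monomials or explicit linear functionals is needed. Case (ii) is handled by the analogous inequality targeting $u_{-2}\mathbf{1}$ in weight $1$, exactly as you anticipated. Your proposed route via weight $d+1$ could in principle be made to work (targeting $u_{-2}\mathbf{1}$), but the weight-$(d+1)$ space is no longer one-dimensional (e.g.\ for $d=1$ it contains $u_{-1}^2\mathbf{1}$), so you would face exactly the bookkeeping obstacle you anticipated, whereas the paper's choice of weight $d$ avoids it entirely.
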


\begin{proof} Note that if $V$ is a vertex operator algebra, then for any $u \in V$, we have that $(L(0)+L(-1)) u = (L(0)+L(-1)) u_{-1}{\bf{1}}= (\mathrm{wt} \, u) u_{-1}{\bf{1}}+ u_{-2}{\bf{1}}$.  Thus if $\mathrm{wt} \, u \neq 0$ then $(L(0)+L(-1)) u$ is a linear combination of vectors in $V$ that contains a scalar multiple of $u=u_{-1}{\bf{1}}$.   We will show that for all $n>0$, if one of the two cases, (i) or (ii) hold, then the vector $(\mathrm{wt} \, u) u_{-1}{\bf{1}}+ u_{-2}{\bf{1}} \notin O_n^\circ(V)$, by showing that for any $v,w \in V$ the expression $v \circ_n w$ written as a linear combination of vectors in $V$ either cannot contain a nonzero scalar multiple of $u = u_{-1}{\bf{1}}$ or cannot contain a nonzero scalar multiple of $u_{-2} \mathbf{1}$.

Since $V$ is strongly generated by $u$, any $v \in V$ can be written as a linear combination of elements of the form  $u_{-k_1} u_{-k_2 } \cdots u_{ - k_m} \mathbf{1}$, for $m \in \mathbb{N}$, and $k_1, \dots, k_m  \in \mathbb{Z}_+$.  Thus it is sufficient to consider $v \circ_n w$ of the form $u_{-k_1} u_{-k_2 } \cdots u_{ - k_m} \mathbf{1} \circ_n w$.   

If $m = 0$, we have $\mathbf{1} \circ_n w =  \res_x\frac{(1+x)^{n}}{x^{2n+2}} w = 0$.  If $m >0$, we
let 
\begin{equation}\label{define-m}
t = \sum_{r=1}^{m}(\mathrm{wt} \, u + k_r  -1),  \quad \mbox{and} \quad s = \sum_{r = 1}^{m} k_r , \quad \mbox{and note that $t = (\mathrm{wt} \, u -1)m + s$.}
 \end{equation}
If $\mathrm{wt} \, u \geq 0$, then $\mathrm{wt} \, (u_{-k_1} u_{-k_2 } \cdots u_{ - k_m} \mathbf{1} )= t \geq 0$, and we have
\begin{eqnarray*}
\lefteqn{ u_{-k_1} u_{-k_2 } \cdots u_{ - k_m} \mathbf{1} \circ_n w}\\
&= & \res_x\frac{(1+x)^{t+n}}{x^{2n+2}} \, {}_{\circ}^{\circ} \, (Y(u_{-k_1}{\bf{1}}, x)) \cdots (Y(u_{-k_m}{\bf{1}}, x)) \, {}_{\circ}^{\circ} \, w\\
&=& \res_x \sum_{l = 0}^{t+n} \binom{t +n }{l}x^{l-2n-2} \, {}_{\circ}^{\circ} \, \left( \frac{1}{(k_1-1)!}  \left(\frac{d}{dx}\right)^{k_1 -1}\sum_{j \in \mathbb{Z}} u_{-j}x^{j-1} \right)\cdot\\
& & \quad  \left( \frac{1}{(k_2-1)!}  \left(\frac{d}{dx}\right)^{k_2-1}\sum_{j \in \mathbb{Z}} u_{-j}x^{j-1} \right) \cdots
 \left( \frac{1}{(k_m - 1)!}  \left(\frac{d}{dx}\right)^{k_m - 1} \sum_{j \in \mathbb{Z}}  u_{-j}x^{j-1}\right)   \, {}_{\circ}^{\circ} \, w\\
 &=& \res_x \sum_{l =0 }^{t+n} \binom{t+n }{l}x^{l-2n-2} \, {}_{\circ}^{\circ} \, \left(\sum_{j\in \mathbb{Z}}\binom{j-1}{k_1-1} u_{-j}x^{j-k_1}\right)  \left(  \sum_{j \in \mathbb{Z}} \binom{j-1}{k_2 - 1}  u_{-j}x^{j-k_2} \right) \cdot \\
& & 
 \cdots \left( \sum_{j \in \mathbb{Z}}  \binom{j-1}{k_m - 1} u_{-j}x^{j-k_m} \right) \, {}_{\circ}^{\circ} \, w\\
 &=& \res_x \sum_{l = 0}^{t+n}  \binom{t+n  }{l}x^{l-2n-2-s} \,  {}_{\circ}^{\circ} \, \left(\sum_{j\in \mathbb{Z}}\binom{j-1}{k_1 - 1} u_{-j}x^{j}\right) \left( \sum_{j \in \mathbb{Z}} \binom{j-1}{k_2 - 1} u_{-j}x^{j} \right) \cdot \\
& &  \cdots \left(  \sum_{j \in \mathbb{Z}} \binom{j-1}{k_m -1 } u_{-j}x^{j} \right) \, {}_{\circ}^{\circ} \, w.
 \end{eqnarray*}
 
Each term in this expression is a scalar multiple of something of the form 
\begin{equation}\label{terms}
{}_{\circ}^{\circ} \, u_{j_1}\cdots u_{j_m}\, {}_{\circ}^{\circ} \, w.
\end{equation}

After taking the residue above, the only terms appearing in $u_{-k_1} u_{-k_2} \cdots u_{ - k_m} \mathbf{1} \circ_n w$ are ones of this form (\ref{terms}) where 
\begin{equation*}
-j_1-\cdots-j_m +l-2n-2- s =-1
\end{equation*} 
or equivalently,
\begin{equation}\label{equation-for-sum}
j_1+\cdots+j_m =l-2n-1-s
\end{equation} 
with $0\leq l \leq n+t$, and $t$ and $s$ as in (\ref{define-m}).  That is, we have
\begin{equation}\label{equation-for-sum2}
-2n-1-s  \leq   j_1+\cdots+j_m  \leq t-n-1-s = (\mathrm{wt} \, u - 1)m -n - 1 .
\end{equation} 

Since $w \in V = \langle u \rangle^1$, we have that $w$ is a linear combination of elements of the form $u_{-p_1} \cdots u_{-p_q} \mathbf{1}$, 
for some $p_1, \dots, p_q \in \mathbb{Z}_+$ and $q \in \mathbb{N}$, and 
\[ \mathrm{wt} \, u_{-p_1} \cdots u_{-p_q} \mathbf{1} = q \, \mathrm{wt} \, u + p_1 + \cdots + p_q - q.\]

For one of the terms (\ref{terms}) to be a nonzero scalar multiple of $u_{-1} \mathbf{1} = u$, which is homogeneous of weight $\mathrm{wt}\, u$, we would have to have some linear component satisfying 
\begin{eqnarray*}
 \mathrm{wt} \, u &=& \mathrm{wt} \, {}_{\circ}^{\circ} \, u_{j_1}\cdots u_{j_m}\, {}_{\circ}^{\circ} \, u_{-p_1} \cdots u_{-p_q} \mathbf{1} \\
 &=& (m + q) \mathrm{wt} \, u  - j_1 - \cdots - j_m + p_1 + \cdots + p_q - m - q,
\end{eqnarray*}
that is
\begin{equation}\label{weights}
 j_1+\cdots+j_m =  (m + q -1) \mathrm{wt} \, u  + p_1 + \cdots + p_q - m - q.
\end{equation}
This with Eqn. (\ref{equation-for-sum2}) would imply 
\begin{equation}\label{weights2}
-2n-1-s  \leq  (m + q -1) \mathrm{wt} \, u  + p_1 + \cdots + p_q - m - q \leq (\mathrm{wt} \, u - 1)m -n - 1
\end{equation}
or equivalently
\begin{equation}\label{weights3}
-2n-s - (\mathrm{wt} \, u - 1) m   \leq (q -1) \, \mathrm{wt} \, u   + p_1 + \cdots + p_q  - q  + 1 \leq  -n <0 .
\end{equation}

If $q>0$, then since $p_1 + \cdots + p_q  - q \geq 0$, and $(q-1)  \, \mathrm{wt} \, u \geq 0$ when $q>0$, we have that 
\[ (q -1)  \mathrm{wt} \, u   + p_1 + \cdots + p_q  - q  + 1 \geq   1\]
which can not be less than or equal to $-n$ in Eqn.\ (\ref{weights3}).  Thus there exist no such combination of $p_1, \dots, p_q$ with $q>0$ such that $u_{-1} \mathbf{1}$ appears as a linear component of $v \circ_n w$.

If $q = 0$, i.e. if $w = \mathbf{1}$, then Eqn.\ (\ref{weights3}) becomes 
\begin{equation}\label{weights4}
-2n-s - (\mathrm{wt} \, u - 1) m   \leq   -  \mathrm{wt} \, u   + 1 \leq  -n <0,
\end{equation}
which is possible if and only if $\mathrm{wt} \, u \geq n + 1$, proving  that $(\mathrm{wt} \, u) u_{-1}{\bf{1}}+ u_{-2} {\bf{1}}\notin O_n^\circ(V)$ if $1 \leq \mathrm{wt} \, u \leq n$.

If $\mathrm{wt} \, u = 0$, then as previously noted $(L(0)+L(-1)) u_{-1}{\bf{1}}= (\mathrm{wt} \, u) u_{-1}{\bf{1}}+ u_{-2}{\bf{1}} = u_{-2} \mathbf{1}$, and thus in this case we have to show that $u_{-2} \mathbf{1} \notin O_n^\circ(V)$, which in particular, imposes the condition that $u_{-2} \mathbf{1} \neq 0$.    In this case, for one of the terms  (\ref{terms})  to be a nonzero scalar multiple of $u_{-2}{\bf{1}}$, we would have to have some $u_{-p_1} \cdots u_{-p_q} \mathbf{1}$  such that 
\begin{eqnarray*}
 \mathrm{wt} \, u_{-2} \mathbf{1}  &=& \mathrm{wt} \, {}_{\circ}^{\circ} \, u_{j_1}\cdots u_{j_m}\, {}_{\circ}^{\circ} \, w \\
 &=& (m + q) \mathrm{wt} \, u  - j_1 - \cdots - j_m + p_1 + \cdots + p_q - m - q,
\end{eqnarray*}
that is
\begin{equation}\label{weights-0}
 j_1+\cdots+j_m =  p_1 + \cdots + p_q - m - q -1 .
\end{equation}
This with Eqn. (\ref{equation-for-sum2}) would imply 
\begin{equation}\label{weights2-0}
-2n-1-s  \leq   p_1 + \cdots + p_q - m - q - 1  \leq -m -n - 1
\end{equation}
or equivalently
\begin{equation}\label{weights3-0}
-2n-s + m   \leq   p_1 + \cdots + p_q  - q   \leq  -n <0
\end{equation}
However, since $p_1 + \cdots + p_q  - q \geq 0$, this is impossible,   proving that  if $\mathrm{wt} \, u = 0$,  no $v,w \in V$ exist such that  $v\circ_n w$ has a term that is a nonzero scalar multiple of $u = u_{-2}{\bf 1}$. This proves that $(\mathrm{wt} \, u) u_{-1}{\bf{1}}+ u_{-2} {\bf{1}}\notin O_n^\circ(V)$ if $\mathrm{wt} \, u = 0$, and $u_{-2} \mathbf{1} \neq 0$.
\end{proof}

\begin{rem}\label{OL-remark}
{\em As stated, Theorem \ref{clarify-definition-thm} applies directly to vertex operator algebras strongly generated by a vector of weight greater than or equal to one, e.g.  the  rank one Heisenberg vertex operator algebra, or the Virasoro vertex operator algebra, or indirectly for such  positive weight elements $u \in V$ where there is not linear dependence between $(L(-1) + L(0))u$ and circle products of vectors in $V$ not generated by $u$,  in particular when the level $n$ of the Zhu algebra in consideration becomes greater than or equal to the weight of the vector being considered.  The Theorem also pertains to the case when $V$ has a nontrivial vacuum (weight zero) space containing any vectors $u$ such that  $u_{-2} {\bf 1} \neq 0$.   Thus there are common settings in which a vertex operator algebra $V$ has the property that $O^L (V) \not\subset O_n^\circ(V)$.   In practice, the theorem implies that when determining $A_n(V)$ for $n \in \mathbb{Z}_+$ and for $V$ any vertex operator algebra, one should at least single out vectors of positive weight less than the level $n$ being considered, any nontrivial weight zero vectors, as well as potentially other vectors, and add in these $(L(-1) + L(0))u \approx 0$  relations that may not arise from $O_n^\circ(V)$ but nevertheless arise from $O^L(V) \subset O_n(V)$.     That is, one should expect to use the fact that these can give nontrivial extra relations in $A_n(V)$ when determining a minimal set of generators and relations of $A_n(V)$.  For instance, the presence of these $(L(-1) + L(0)) v \in 
O_n(V)$ is used heavily in the constructions of the higher level Zhu algebras in \cite{BVY-Heisenberg}, \cite{BVY-Virasoro},  and \cite{AB-details-of-Heisenberg}. }
\end{rem}

\section{Using $A_{n-1}(V)$ to calculate $A_n(V)$}\label{technique-section} 

In this section, we provide some further techniques which have proved helpful in calculating higher level Zhu algebras once a reasonable set of generators has been determined by using the techniques of the previous sections.  The main two techniques in calculating $A_n(V)$, assuming $A_{n-1}(V)$ is already known,  are:  I. Make use of the relationship between these two algebras which imposes restrictions on the possible relations between the generators of $A_n(V)$ in terms of the relations of these generators in $A_{n-1}(V)$; II. Make use of the action of the zero modes of the generators of $A_n(V)$ on the $V$-modules already determined by $A_{n-1}(V)$ to help further determine relations amongst the generators in $A_n(V)$.  

We present two ``Theorems" below, which gather together the pertinent facts or general principles that allow Techniques I and II to be of use.  For Technique I, we note the following:

\begin{thm}\label{use-lower-A-thm}
Suppose that $S_n = \{v_j + O_n(V)\}_{j \in J}$ for $v_j \in V$ is a set of generators for $A_{n}(V)$ and let $R= \mathbb{C}\langle x_j \; | \; j \in J \rangle$ denote the free algebra generated by the formal variables $x_j$ for $j \in J$.  Then 

(i)  $S_{n-1}$ is a generating set for $A_{n-1}(V)$.

(ii) There exists a two-sided ideal $I_{n-1}$ of $R$ such that $A_{n-1}(V) \cong R/I_{n-1}$.

(iii) There exists a two-sided ideal $I_n$ of $R$ such that $A_n(V) \cong R/I_n$.

(iv) $I_n \subset I_{n-1}$.  
\end{thm}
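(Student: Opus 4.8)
The plan is to build everything on top of the surjective algebra homomorphism $\pi_{n} : A_n(V) \to A_{n-1}(V)$ sending $v + O_n(V) \mapsto v + O_{n-1}(V)$, recalled in Eqn.~\eqref{surjection}, together with the universal property of the free algebra $R$. First I would construct an algebra homomorphism $\phi_n : R \to A_n(V)$ by sending $x_j \mapsto v_j + O_n(V)$; since $S_n$ generates $A_n(V)$, $\phi_n$ is surjective, and setting $I_n = \ker \phi_n$ gives part (iii) via the first isomorphism theorem. For part (i), I would observe that $S_{n-1} = \{v_j + O_{n-1}(V)\}_{j\in J}$ is the image of $S_n$ under $\pi_n$; because $\pi_n$ is a surjective algebra homomorphism and $S_n$ generates $A_n(V)$, the image $\pi_n(S_n) = S_{n-1}$ generates $\pi_n(A_n(V)) = A_{n-1}(V)$. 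Part (ii) then follows by the same construction applied one level down: define $\phi_{n-1} : R \to A_{n-1}(V)$ by $x_j \mapsto v_j + O_{n-1}(V)$, which is surjective by part (i), and set $I_{n-1} = \ker\phi_{n-1}$.

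The key point for part (iv) is the compatibility $\phi_{n-1} = \pi_n \circ \phi_n$. This holds because both sides are algebra homomorphisms out of the free algebra $R$ and they agree on the generators: $(\pi_n \circ \phi_n)(x_j) = \pi_n(v_j + O_n(V)) = v_j + O_{n-1}(V) = \phi_{n-1}(x_j)$. Once this identity is in hand, part (iv) is immediate: if $a \in I_n = \ker\phi_n$, then $\phi_{n-1}(a) = \pi_n(\phi_n(a)) = \pi_n(0) = 0$, so $a \in \ker \phi_{n-1} = I_{n-1}$; hence $I_n \subset I_{n-1}$.

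I do not anticipate a genuine obstacle here — the statement is essentially a diagram-chase packaging of the already-established fact that $A_n(V)$ surjects onto $A_{n-1}(V)$ compatibly with a common choice of lift of generators. The only point requiring a little care is making sure the \emph{same} free algebra $R$ and the \emph{same} indexing is used at both levels, so that $\phi_n$ and $\phi_{n-1}$ genuinely share a domain and the composition $\pi_n \circ \phi_n$ makes sense; this is exactly what the hypothesis ``$S_n = \{v_j + O_n(V)\}_{j\in J}$'' with $R = \mathbb{C}\langle x_j \mid j \in J\rangle$ is set up to guarantee. I would also remark, for the reader's benefit, that the inclusion $I_n \subset I_{n-1}$ reflects the intuition that $A_{n-1}(V)$ is a quotient of $A_n(V)$, so any relation holding at level $n$ also holds at level $n-1$, while new relations (those in $I_{n-1} \setminus I_n$) are precisely what one must add to pass from the level $n$ algebra to the level $n-1$ algebra.
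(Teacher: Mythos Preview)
Your proposal is correct and follows essentially the same approach as the paper: both define the surjection $\phi_n : R \to A_n(V)$ by $x_j \mapsto v_j + O_n(V)$, compose with the surjection $\pi_n : A_n(V) \to A_{n-1}(V)$ from Eqn.~\eqref{surjection} to obtain $\phi_{n-1}$, and read off $I_n = \ker\phi_n \subset \ker\phi_{n-1} = I_{n-1}$. Your write-up is slightly more explicit about the compatibility $\phi_{n-1} = \pi_n \circ \phi_n$, but the argument is the same.
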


\begin{proof} From Proposition 2.4 in \cite{DLM}, we have the surjective homomorphism given by Eqn.\ (\ref{surjection}), which follows from the fact that $O_n(V) \subset O_{n-1}(V)$.  If $S_n$ generates $A_n(V)$, then there is a surjective homomorphism from $R$ onto $A_n(V)$ with $x_j \mapsto v_j + O_n(V)$ whose kernel is a two-sided ideal $I_n$, giving $A_n(V) \cong R/I_n$.  Then the composition of the surjection of $R$ onto $A_n(V)$ and the surjection of $A_n(V)$ onto $A_{n-1}(V)$ gives a surjective homomorphism from $R$ onto $A_{n-1}(V)$ with $x_j \mapsto v_j + O_{n-1}(V)$ whose kernel is $I_{n-1}$, so that $A_{n-1}(V) \cong R/I_{n-1}$, and such that $I_n \subset I_{n-1}$.  
\end{proof}

For Technique II, we note the following Theorem

\begin{thm}\label{use-zero-mode-thm} 
Suppose that $S_n = \{v_j +O_n(V)\}_{j \in J} \subset A_n(V)$ is a set of generators for $A_{n}(V)$ and let $R= \mathbb{C}\langle x_j \; | \; j \in J \rangle$ denote the free algebra generated by the formal variables $x_j$ for $j \in J$.   If $W = L_{n-1}(U)$ is a $V$-module induced at level $n-1$ from the $A_{n-1}(V)$-module $U$, $P(x_j \; | \;  j \in J)$ is a word in $R$, and $P(v_j + O_n(V) \; | \; j \in J) \in I_n$ where $A_n \cong R/I_n$, then $o(P).W(k) = 0$ for $k = 0, \dots, n$.  
\end{thm}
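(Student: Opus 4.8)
The plan is to unwind the definitions and exploit the compatibility between the zero-mode action of $A_n(V)$ on $\Omega_n$-type subspaces and the surjection $A_n(V)\twoheadrightarrow A_{n-1}(V)$. First I would recall that $W=L_{n-1}(U)$ is an $\mathbb{N}$-gradable $V$-module, so by the discussion following \eqref{Omega}, for each $k$ the space $W(k)\subseteq W$ carries a module action of $A_n(V)$ through zero modes $o(v+O_n(V))=v_{\mathrm{wt}\,v-1}$ whenever the relevant modes land back in the same graded piece — more precisely, the zero mode $o(v)$ preserves each $W(k)$, and the key fact from \cite{DLM} is that $o$ defines an algebra homomorphism $A_n(V)\to \mathrm{End}(W(k))$ on the subspace $\Omega_n(W)$, and in fact on all of $W(k)$ for $k\le n$ since for degree reasons every element of $W(k)$ with $k\le n$ lies in $\Omega_n(W)$ (an element $w\in W(k)$ is killed by $v_i$ with $\mathrm{wt}\,v_i<-n$ because such a mode would shift degree below $0$). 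This last point — that $W(k)\subseteq\Omega_n(W)$ for $0\le k\le n$, which uses $W(0)\ne 0$ and the $\mathbb{N}$-grading so nothing sits in negative degree — is the structural input that makes the statement work, and I would state it carefully.

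Next, since $W=L_{n-1}(U)$ is induced at level $n-1$, it is built from the $A_{n-1}(V)$-module $U$, and the bottom levels $W(0),\dots,W(n-1)$ of $W$ are governed by the $A_{n-1}(V)$-action (this is the content of how $L_{n-1}$ is constructed in \cite{DLM}, \cite{BVY}): the zero-mode action of $A_n(V)$ on $W(k)$ for $k\le n-1$ factors through the quotient map $A_n(V)\to A_{n-1}(V)$ of \eqref{surjection}. For $k=n$ we need a small extra argument: the action of $A_n(V)$ on $W(n)$ is the genuine $A_n(V)$-module action obtained from the $\Omega_n$ functor applied to $W$, and there is nothing more to factor through — the point is simply that $o$ is a homomorphism $A_n(V)\to\mathrm{End}(W(n))$. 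Combining: for every $k\in\{0,\dots,n\}$ there is an algebra homomorphism $\phi_k\colon A_n(V)\to\mathrm{End}(W(k))$ with $\phi_k(v+O_n(V))=o(v)|_{W(k)}$.

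Now I would bring in the presentation $A_n(V)\cong R/I_n$ from Theorem \ref{use-lower-A-thm}: composing the quotient $R\twoheadrightarrow A_n(V)\cong R/I_n$ with $\phi_k$ gives an algebra homomorphism $\Psi_k\colon R\to\mathrm{End}(W(k))$ sending $x_j\mapsto o(v_j)|_{W(k)}$, and $\Psi_k$ annihilates $I_n$ since $I_n$ is exactly the kernel of $R\to A_n(V)$. Given a word $P(x_j\mid j\in J)\in R$ with $P(v_j+O_n(V)\mid j\in J)\in I_n$, i.e.\ the image of $P$ in $A_n(V)$ lies in the kernel of $R/I_n$'s presentation — wait, more precisely: $P\in R$ maps to an element of $I_n$ under nothing, rather the hypothesis is that the element $P(v_j+O_n(V))$ of $A_n(V)$ is zero, equivalently $P\in I_n$ as an element of $R$. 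Then $\Psi_k(P)=0$, which says precisely $o(P).W(k)=0$, where $o(P)$ denotes the corresponding word in the zero modes $o(v_j)$. Since this holds for every $k=0,\dots,n$, the theorem follows.

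The main obstacle I anticipate is \textbf{justifying that $W(k)\subseteq\Omega_n(W)$ and hence that the zero-mode action of $A_n(V)$ on each low-degree piece is well-defined and multiplicative}, particularly reconciling the $k\le n-1$ case (where it factors through $A_{n-1}(V)$ and one might worry about which algebra's relations are being used) with the $k=n$ case (where it is the honest $A_n(V)$-action from $\Omega_n/\Omega_{n-1}$). One must be careful that the word $P$, being in $I_n$ but not necessarily in $I_{n-1}$, still acts as zero on $W(k)$ for $k\le n-1$ — this is fine because $I_n\subseteq I_{n-1}$ by Theorem \ref{use-lower-A-thm}(iv), so $P\in I_n\subseteq I_{n-1}$ kills $W(k)$ through the $A_{n-1}(V)$-action too. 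Making this uniform across all $k\le n$ via the single family of homomorphisms $\phi_k$ is the cleanest route, and the only real work is citing/reproving from \cite{DLM} the statement that $o$ is multiplicative on all of $W(k)$, not merely on $\Omega_n(W)$, for $k\le n$.
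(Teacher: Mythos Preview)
Your proof is correct and follows the same core idea as the paper: for any $\mathbb{N}$-gradable $V$-module $W$ one has $\coprod_{k=0}^{n} W(k)\subseteq \Omega_n(W)$, the latter is an $A_n(V)$-module via zero modes, and hence any element that is zero in $A_n(V)$ (i.e.\ any $P\in I_n$) acts as zero there.

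The paper's proof is a two-line version of this and does \emph{not} split into the cases $k\le n-1$ versus $k=n$, nor does it invoke the factoring through $A_{n-1}(V)$ or the inclusion $I_n\subseteq I_{n-1}$. That entire discussion in your proposal is a detour: once you know $W(k)\subseteq\Omega_n(W)$ and that $\Omega_n(W)$ is an $A_n(V)$-module, the hypothesis $P\in I_n$ already says $P$ represents $0\in A_n(V)$, so $o(P)$ vanishes on $\Omega_n(W)$ for every $k\le n$ uniformly. There is no danger of ``$P\in I_n$ but not $I_{n-1}$'' to worry about, and you do not need the induced-module structure of $L_{n-1}(U)$ beyond the fact that it is $\mathbb{N}$-gradable. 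Your eventual ``cleanest route'' via the homomorphisms $\phi_k$ is exactly the paper's argument; the preceding case analysis can simply be deleted.
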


\begin{proof}  Recall from Section 2, or \cite{DLM}, that for any $\mathbb{N}$-gradable $V$-module $W$, the subspace $\Omega_n(W)$ which contains $\coprod_{k = 0}^n W(k)$ (see \cite{BVY}), is an $A_n(V)$-module with action given via zero modes, i.e. $(v + O_n(V)).w = o(v)w = v_{\mathrm{wt} \, v - 1}w$, where $v_{\mathrm{wt}, v - 1}$ acting on $W$ is defined via the $V$-module structure of $W$.  

Thus if $P \in I_n$, then $o(P)$ acts trivially on any $A_n(V)$-module, in particular on any $\coprod_{k = 0}^n W(k)$ for $W$ a $V$-module induced from $A_{n-1}(V)$.  
\end{proof} 

In particular, if a relation $P$ can be found or guessed and proved to hold up to lower order terms in the total degree of the words in the generators $S$, then often acting via the zero modes of the generators, one can establish what the lower degree terms of $P$ are exactly.  In addition, using the fact that $P \in I_n \subset I_{n-1}$ as established in Theorem \ref{use-lower-A-thm}, allows one to further limit the possible form of these lower degree terms, thus helping establish the relation.  

We illustrate these techniques in detail in \cite{AB-details-of-Heisenberg} and in the following section by giving some of the results which were used in \cite{AB-details-of-Heisenberg} to calculate the level two Zhu algebra for the Heisenberg vertex operator algebra using the relations of the level two generators inside the level one Zhu algebra for the Heisenberg vertex operator algebra which was calculated in \cite{BVY-Heisenberg}.

\section{Applications}

The applications of the results of the previous sections are wide ranging.  In particular they apply to the subalgebra of $A_n(V)$ generated by the conformal element $\omega$ of $V$, for any vertex operator algebra $V$, since $\langle \omega \rangle$ is a strongly generated vertex operator algebra.  They also apply to any elements $u \in V$ that generate a Heisenberg vertex subalgebra. We discuss these two settings below. 

\subsection{The Heisenberg vertex operator algebra}\label{Heisenberg-section}

We denote by $\mathfrak{h}$ a one-dimensional abelian Lie algebra spanned by $\alpha$ with a bilinear form $\langle \cdot, \cdot \rangle$ such that $\langle \alpha, \alpha \rangle = 1$, and by
\[
\hat{\mathfrak{h}} = \mathfrak{h}\otimes \C[t, t^{-1}] \oplus \C \mathbf{k}
\]
the affinization of $\mathfrak{h}$ with bracket relations
\[
[a(m), b(n)] = m\langle a, b\rangle\delta_{m+n, 0}\mathbf{k}, \;\;\; a, b \in \mathfrak{h},
\]
\[
[\mathbf{k}, a(m)] = 0,
\]
where we define $a(m) = a \otimes t^m$ for $m \in \mathbb{Z}$ and $a \in \mathfrak{h}$.

Set
\[
\hat{\mathfrak{h}}^{+} =\mathfrak{h} \otimes   t\C[t] \qquad \mbox{and} \qquad \hat{\mathfrak{h}}^{-} = \mathfrak{h} \otimes t^{-1}\C[t^{-1}].
\]
Then $\hat{\mathfrak{h}}^{+}$ and $\hat{\mathfrak{h}}^{-}$ are abelian subalgebras of $\hat{\mathfrak{h}}$.  Consider the induced $\hat{\mathfrak{h}}$-module given by 
\[
M(1) = U(\hat{\mathfrak{h}})\otimes_{U(\C[t]\otimes \mathfrak{h} \oplus \C c)} \C{\bf 1} \simeq S(\hat{\mathfrak{h}}^{-}) \qquad \mbox{(linearly)},
\]
where $U(\cdot)$ and $S(\cdot)$ denote the universal enveloping algebra and symmetric algebra, respectively, $\mathfrak{h} \otimes \C[t]$ acts trivially on $\mathbb{C}\mathbf{1}$ and $\mathbf{k}$ acts as multiplication by $1$.   Then $M(1)$ is a vertex operator algebra, often called the {\it vertex operator algebra associated to the rank one Heisenberg}, or the {\it rank one Heisenberg vertex operator algebra}, or the {\it one free boson vertex operator algebra}.  Here, the Heisenberg Lie algebra in question being precisely $\hat{\mathfrak{h}} \diagdown \mathbb{C}\alpha(0)$.

Any element of $M(1)$ can be expressed as a linear combination of elements of the form
\begin{equation}\label{generators-for-V}
\alpha(-k_1)\cdots \alpha(-k_j){\bf 1}, \quad \mbox{with} \quad  k_1 \geq \cdots \geq k_j \geq 1.
\end{equation}
In other words, $M(1)$ is strongly generated by $\alpha(-1) \mathbf{1}$ implying that most of the results of this paper pertain to $M(1)$.  

The conformal element for $M(1)$ is given by $\omega = \frac{1}{2} \alpha(-1)^2{\bf 1}$.
However the element $\omega_a = \frac{1}{2} \alpha(-1)^2{\bf 1} + a \alpha(-2) \mathbf{1}$
for any $a \in \mathbb{C}$, also gives $M(1)$ with the conformal element $\omega_a$ the structure of a vertex operator algebra with central charge $c = 1 - 12 a^2$.  We denote this vertex operator algebra with shifted conformal elements by $M_a(1)$.    So, for instance $M(1) = M_0(1)$.

Next, we recall the construction of $A_0(M_a(1))$ from  \cite{FZ} and the construction of $A_1(M_a(1))$ from \cite{BVY}:

\begin{thm}\label{A_0-theorem} \cite{FZ} As algebras 
\[A_0(M_a(1)) \cong \mathbb{C}[x,y]/(x^2-y) \cong \mathbb{C}[x]\] 
under the identifications 
\[\alpha(-1)\mathbf{1} + O_0(M_a(1)) \longleftrightarrow x, \qquad \mbox{and} \qquad  \alpha(-1)^2\mathbf{1} + O_0(M_a(1)) \longleftrightarrow y.\]  
In addition, there is a bijection between isomorphism classes of irreducible admissible $M_a(1)$-modules and irreducible $\mathbb{C}[x]$-modules given by $M_a(1,\lambda) \longleftrightarrow \mathbb{C}[x]/(x - \lambda)$. 
\end{thm}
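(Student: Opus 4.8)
The plan is to prove $A_0(M_a(1))\cong\mathbb{C}[x]$, identify the named generators inside it, and then read off the module correspondence from the Frenkel--Zhu theory. First I would note that the whole problem is $a$-independent: since $\alpha(0)$ acts as zero on $M_a(1)$ and $Y(\omega_a,x)=Y(\omega_0,x)+a\,\tfrac{d}{dx}\alpha(x)$ with $(\alpha(-2)\mathbf{1})_0=(\alpha(-2)\mathbf{1})_1=0$ on $M_a(1)$, the operators $L(0)$ and $L(-1)$ — hence the grading, hence $O_0(M_a(1))$ and the algebra structure — coincide with those of $M(1)=M_0(1)$; alternatively one may simply carry $a$ along, as it never enters the residue computations below. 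Then, since $M_a(1)$ is strongly generated by $u=\alpha(-1)\mathbf{1}$, of weight one, and $V^u=\langle u\rangle^1=M_a(1)$ satisfies the permutation property — indeed the modes $\alpha(-k)$, $k\in\mathbb{Z}_+$, mutually commute, so the error terms $f$ vanish — and acts strongly on $W=\mathbb{C}\mathbf{1}$, Proposition \ref{general-generators-prop} with $n=0$ (so $2n+1=1$) shows that every element of $A_0(M_a(1))$ is a linear combination of the images of $\alpha(-1)^i\mathbf{1}$, $i\in\mathbb{N}$. Hence $A_0(M_a(1))$ is generated by the single element $x:=\alpha(-1)\mathbf{1}+O_0(M_a(1))$, is therefore commutative, and there is a surjective algebra homomorphism $\phi\colon\mathbb{C}[x]\to A_0(M_a(1))$.

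For injectivity of $\phi$ I would use the Fock modules $M_a(1,\lambda)$, $\lambda\in\mathbb{C}$: each is an $\mathbb{N}$-gradable $M_a(1)$-module whose one-dimensional degree-zero space $\mathbb{C}\mathbf{1}_\lambda$ is an $A_0(M_a(1))$-module on which $o(x)=(\alpha(-1)\mathbf{1})_0=\alpha(0)$ acts by the scalar $\lambda$. Because $o$ is an algebra homomorphism with $o(p\cdot q)=o(p)o(q)$, the element $\phi(p)$ acts on this line by $p(\lambda)$ for every $p\in\mathbb{C}[x]$; if $\phi(p)=0$ then $p(\lambda)=0$ for all $\lambda\in\mathbb{C}$, so $p=0$, and $\phi$ is an isomorphism. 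To pin down the generators, a direct residue computation of $x*_0 x=\res_x\frac{1+x}{x}Y(\alpha(-1)\mathbf{1},x)\alpha(-1)\mathbf{1}$ — using $\alpha(1)\alpha(-1)\mathbf{1}=\mathbf{1}$, $\alpha(0)\alpha(-1)\mathbf{1}=0$, $\alpha(-1)\alpha(-1)\mathbf{1}=\alpha(-1)^2\mathbf{1}$, and $\alpha(n)\alpha(-1)\mathbf{1}=0$ for $n\ge2$ — gives $x*_0 x=\alpha(-1)^2\mathbf{1}+O_0(M_a(1))$. Setting $y:=\alpha(-1)^2\mathbf{1}+O_0(M_a(1))$ we thus get $y=x^2$, so the assignment $x\mapsto x$, $y\mapsto y$ defines a homomorphism $\mathbb{C}[x,y]/(x^2-y)\to A_0(M_a(1))$ which factors as the canonical isomorphism $\mathbb{C}[x,y]/(x^2-y)\xrightarrow{\sim}\mathbb{C}[x]$ (sending $y\mapsto x^2$) followed by $\phi$; hence all three rings are isomorphic via exactly the stated identifications.

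Finally, the bijection of isomorphism classes of irreducibles is the Frenkel--Zhu correspondence of \cite{FZ}, realized by $\Omega_0$ and $L_0$: the irreducible modules over $A_0(M_a(1))\cong\mathbb{C}[x]$ are precisely the one-dimensional modules $\mathbb{C}[x]/(x-\lambda)$, $\lambda\in\mathbb{C}$, and since $o(x)=\alpha(0)$ acts by $\lambda$ on the degree-zero space of $M_a(1,\lambda)$, that $M_a(1)$-module corresponds to $\mathbb{C}[x]/(x-\lambda)$; no further irreducible admissible $M_a(1)$-modules occur because the classification of simple $\mathbb{C}[x]$-modules is exhausted by the $\mathbb{C}[x]/(x-\lambda)$. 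I expect the only genuinely non-formal ingredient — and the step to treat with care — to be the existence and structure of the Fock modules $M_a(1,\lambda)$ as $\mathbb{N}$-gradable $M_a(1)$-modules with one-dimensional top level (on which $\alpha(0)$ acts by $\lambda$); this input is used both for the injectivity of $\phi$ and for the classification of irreducibles, but it is standard, after which everything else reduces to bookkeeping with residues and the $\alpha(-k)$-commutation relations.
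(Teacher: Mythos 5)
The paper does not prove this theorem --- it is quoted from \cite{FZ} --- so there is no internal proof to compare against; judged on its own, your argument is correct and is essentially the standard one. Two small points. First, the reduction to the single generator $x$ contains a step you assert but do not justify: Proposition \ref{general-generators-prop} with $n=0$ only gives that $A_0(M_a(1))$ is \emph{linearly spanned} by the classes of $\alpha(-1)^i\mathbf{1}$; to conclude that it is \emph{algebra-generated} by $x$ you also need $\alpha(-1)\mathbf{1} *_0 \alpha(-1)^{i-1}\mathbf{1} \equiv \alpha(-1)^{i}\mathbf{1}$ modulo lower-degree terms, together with an induction on $i$. This is the same residue computation you carry out for $i=2$ (and here the error term $g_1(v)$ of Corollary \ref{multiplication-cor-special-case-more-general} actually vanishes, since the only potentially surviving mode is $\alpha(0)v=0$), so the gap is trivial to fill, but it should be filled rather than elided. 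Second, both the injectivity of $\phi$ and the final classification rest on the existence and irreducibility of the Fock modules $M_a(1,\lambda)$ as admissible modules with one-dimensional degree-zero space on which $\alpha(0)$ acts by $\lambda$; you correctly flag this as the one external input. Everything else --- the $a$-independence of $L(0)$ and $L(-1)$ (since $(\alpha(-2)\mathbf{1})_0$ and $(\alpha(-2)\mathbf{1})_1=-\alpha(0)$ act as zero), the computation $y=x^2$ in $A_0$, and the Frenkel--Zhu bijection realized by $\Omega_0$ and $L_0$ --- checks out.
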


\begin{thm}\label{A_1-theorem}\cite{BVY-Heisenberg}
Let $I$ be the ideal generated by the polynomial $(x^2-y)(x^2-y+2)$ in $\mathbb{C}[x, y]$.  
Then we have the following isomorphisms of algebras 
\begin{eqnarray}\label{first-characterization-level-one}
A_1(M_a(1)) &\cong& \mathbb{C}[x, y]/I \ \cong \ \mathbb{C}[x,y]/(x^2 - y) \oplus \mathbb{C}[x,y]/(x^2 - y + 2) \\
& \cong & A_0(M_a(1)) \oplus \mathbb{C}[x,y]/(x^2 - y+ 2)  \ \cong \  \mathbb{C}[x] \oplus \mathbb{C} [x] \label{second-characterization}
\end{eqnarray}
under the identifications
\[
\alpha(-1){\bf 1} + O_1(M_a(1)) \longleftrightarrow x + I, \quad \qquad  \alpha(-1)^2{\bf 1} + O_1(M_a(1)) \longleftrightarrow y + I.
\]
\end{thm}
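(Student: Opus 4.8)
The plan is to realize $A_1(M_a(1))$ as a quotient of a polynomial ring in two variables: first pin down generators and a surjection $\phi\colon\mathbb{C}[x,y]\twoheadrightarrow A_1(M_a(1))$, then produce the defining relation $(x^2-y)(x^2-y+2)$, and finally compute $\ker\phi$ using the Fock modules. As a preliminary reduction I would note that $A_1(M_a(1))$ is independent of $a$: the circle products $u\circ_1 v$ depend only on $Y$ and on $\mathrm{wt}\,u$, and the operators $L(-1),L(0)$ coming from $\omega_a=\tfrac12\alpha(-1)^2\mathbf{1}+a\,\alpha(-2)\mathbf{1}$ coincide on $M(1)$ with those coming from $\omega_0$ — indeed $(\omega_a)_0=(\omega_0)_0$ as operators since $(\alpha(-2)\mathbf{1})_0=0$, and $(\omega_a)_1=(\omega_0)_1-a\,\alpha(0)$ acts as $(\omega_0)_1$ because $\alpha(0)=0$ on $M(1)$. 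In particular $O_1(M_a(1))=O_1(M_0(1))$ and $\alpha(-1)\mathbf{1},\alpha(-1)^2\mathbf{1}$ remain homogeneous of weights $1$ and $2$.

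Applying the reduction machinery of Sections \ref{general-n-section}--\ref{definition-clarification-section} with $u=\alpha(-1)\mathbf{1}$, $n=1$, $\mathrm{wt}\,u=1$ (valid since $M(1)=\langle u\rangle^1$ is strongly generated by $u$ and, as the modes $\alpha(-k)$ commute, satisfies the permutation property), Theorem \ref{reducing-general-generators-thm} reduces the generators to the $u_{-2}^{i_2}u_{-1}^{i_1}\mathbf{1}+O_1$, and Theorem \ref{last-generator-theorem} forces $i_1\in\{0,1\}$ together with the $u_{-1}^{j}\mathbf{1}+O_1$. Combining these with the $O^L$-relations — e.g. $\alpha(-2)\mathbf{1}\approx-\alpha(-1)\mathbf{1}$ and $\alpha(-2)\alpha(-1)\mathbf{1}\approx-\alpha(-1)^2\mathbf{1}$ from Lemma \ref{L-reduction-lemma} — with the recursion of Proposition \ref{recursion-n} and with explicit $*_1$-products, one checks that $A_1(M_a(1))$ is generated by $x:=\alpha(-1)\mathbf{1}+O_1$ and $y:=\alpha(-1)^2\mathbf{1}+O_1$ and that $x$ and $y$ commute (a finite $*_1$-computation). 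This gives the surjection $\phi\colon\mathbb{C}[x,y]\twoheadrightarrow A_1(M_a(1))$.

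The principal obstacle is the relation: one must show that the vector of $M(1)$ which $\phi$ carries to $(x^2-y)(x^2-y+2)$ lies in $O_1(M_a(1))$, so that $I:=\big((x^2-y)(x^2-y+2)\big)\subseteq\ker\phi$. I would do this by computing the relevant products, such as $\alpha(-1)\mathbf{1}*_1\alpha(-1)\mathbf{1}$ and $\alpha(-1)\mathbf{1}*_1\alpha(-1)^2\mathbf{1}$, via Lemma \ref{multiplication-lemma} and Corollary \ref{multiplication-cor-special-case-more-general}, reducing the resulting vectors modulo $O_1^\circ$ using Proposition \ref{recursion-n} and Lemma \ref{Y+-lemma} and modulo $O^L$ using Lemma \ref{L-reduction-lemma}, and then assembling the outcome into the polynomial identity; this is the computational heart of \cite{BVY-Heisenberg}. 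For the reverse containment $\ker\phi\subseteq I$, I would invoke the level-one picture: for each $\lambda\in\mathbb{C}$ the Fock module $M(1,\lambda)$ has $\Omega_1(M(1,\lambda))\supseteq M(1,\lambda)(0)\oplus M(1,\lambda)(1)$ as an $A_1(M(1))$-module (as in the proof of Theorem \ref{use-zero-mode-thm} and in \cite{BVY}), and since the zero-mode action preserves each homogeneous summand, the one-dimensional spaces $M(1,\lambda)(0)$ and $M(1,\lambda)(1)$ are $A_1(M(1))$-modules in their own right. A direct zero-mode computation gives $o(\alpha(-1)\mathbf{1})=\alpha(0)$ acting as $\lambda$ on both, while $o(\alpha(-1)^2\mathbf{1})=\alpha(0)^2+2\sum_{m\ge1}\alpha(-m)\alpha(m)$ acts as $\lambda^2$ on the degree-zero space and as $\lambda^2+2$ on the degree-one space. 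Hence any $p(x,y)\in\ker\phi$ acts as $0$ on all of these modules, so $p(\lambda,\lambda^2)=0$ and $p(\lambda,\lambda^2+2)=0$ for every $\lambda\in\mathbb{C}$; dividing $p$ by the $y$-monic polynomials $x^2-y$ and $x^2-y+2$ shows each divides $p$, and since these two polynomials are coprime, $p\in I$. Therefore $\ker\phi=I$ and $A_1(M_a(1))\cong\mathbb{C}[x,y]/I$, with $x\leftrightarrow\alpha(-1)\mathbf{1}+O_1$ and $y\leftrightarrow\alpha(-1)^2\mathbf{1}+O_1$ as claimed.

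The remaining isomorphisms are purely ring-theoretic. Since $x^2-y$ and $x^2-y+2$ are coprime in $\mathbb{C}[x,y]$, the Chinese Remainder Theorem gives $\mathbb{C}[x,y]/I\cong\mathbb{C}[x,y]/(x^2-y)\oplus\mathbb{C}[x,y]/(x^2-y+2)$; the first summand is $A_0(M_a(1))$ by Theorem \ref{A_0-theorem}, and the corresponding projection of $A_1(M_a(1))$ is the surjection $A_1(M_a(1))\to A_0(M_a(1))$ of Section \ref{zhu-algebra-definition-section}. Finally, each summand $\mathbb{C}[x,y]/(x^2-y+c)$ is isomorphic to $\mathbb{C}[x]$ via $y\mapsto x^2+c$, yielding $A_1(M_a(1))\cong\mathbb{C}[x]\oplus\mathbb{C}[x]$.
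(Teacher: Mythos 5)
The paper does not actually prove this theorem --- it is imported from \cite{BVY-Heisenberg} --- and your outline is essentially the strategy that the present paper attributes to that reference (and uses again at level two in \cite{AB-details-of-Heisenberg}): cut the generating set down using Proposition \ref{recursion-n}, Theorems \ref{reducing-general-generators-thm} and \ref{last-generator-theorem}, and the $O^L$-relations of Lemma \ref{L-reduction-lemma}, then determine the ideal of relations by letting zero modes act on the Fock modules, exactly as codified in Theorem \ref{use-zero-mode-thm}. The portions you carry out are correct: the $a$-independence (since $(\alpha(-2)\mathbf{1})_0=0$ and $(\alpha(-2)\mathbf{1})_1=-\alpha(0)$, which vanishes on $M(1)$); the identity $o(\alpha(-1)^2\mathbf{1})=\alpha(0)^2+2\sum_{m\ge1}\alpha(-m)\alpha(m)$ with eigenvalues $\lambda^2$ and $\lambda^2+2$ on the one-dimensional spaces $M(1,\lambda)(0)$ and $M(1,\lambda)(1)$, each of which is indeed an $A_1$-submodule of $\Omega_1(M(1,\lambda))$ because zero modes preserve degree; the resulting containment $\ker\phi\subseteq(x^2-y)\cap(x^2-y+2)=I$; and the Chinese Remainder step. (A cosmetic difference: \cite{BVY-Heisenberg}, as reflected in Proposition \ref{use-level-one-prop}, works with five generators $x_1,y_1,\tilde y_1,z_1,\tilde z_1$ in a free algebra and then eliminates three of them; you pass directly to $\mathbb{C}[x,y]$, which is cleaner but presupposes the commutativity of $x$ and $y$ that you only assert.)

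The genuine gap is the reverse containment $I\subseteq\ker\phi$, together with the claim that $x$ and $y$ generate. The former requires exhibiting the specific vector of $M(1)$ representing $(x *_1 x - y)*_1(x*_1x-y+2\mathbf{1})$ as an element of $O^L(M(1))+O_1^\circ(M(1))$; the latter requires expressing $\alpha(-2)^{i_2}\mathbf{1}$, $\alpha(-2)^{i_2}\alpha(-1)\mathbf{1}$, and $\alpha(-1)^{j}\mathbf{1}$ as $*_1$-polynomials in $x$ and $y$ via Lemma \ref{multiplication-lemma}. Both are finite, routine-in-principle computations, and your road map for them is the right one, but they are where the entire content of the theorem resides (without $I\subseteq\ker\phi$ you only obtain a surjection $A_1(M_a(1))\twoheadrightarrow\mathbb{C}[x,y]/I$, not an isomorphism). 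As written, the proposal is an accurate and well-organized plan whose decisive computations are deferred rather than performed.
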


More recently, using results from this paper, we prove in \cite{AB-details-of-Heisenberg} the following:

\begin{thm}\label{main-generators-thm}\cite{AB-details-of-Heisenberg}
The algebra $A_2(M_a(1))$ is generated by $v + O_2(M_a(1))$ for $v$ in  
\[S = \{ \alpha(-1) {\bf 1}, \, \alpha(-1)^2 {\bf 1}, \,  \alpha(-1) \alpha(-4) {\bf 1}, \, \alpha(-1)^2\alpha(-4){\bf 1}, \,  \alpha(-1)\alpha(-4)^2{\bf 1} \}.\]
\end{thm}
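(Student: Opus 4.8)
# Proof Proposal for Theorem \ref{main-generators-thm}

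The plan is to apply the general generator-reduction machinery developed in Sections \ref{general-n-section} and \ref{mult-section} to the specific case $V = M_a(1)$, $n = 2$, and $u = \alpha(-1)\mathbf{1}$, and then to carry out the finitely many remaining explicit reductions by hand. First I would observe that $M_a(1)$ is strongly generated by $u = \alpha(-1)\mathbf{1}$ with $\mathrm{wt}\, u = 1 \geq -n$, and that $V^u = \langle u\rangle^1 = M_a(1)$ itself, which trivially satisfies the permutation property and acts strongly on $W = V$ (since $\alpha(j)$ for $j \geq 0$ annihilates $\mathbf{1}$ and the modes commute up to central terms in lower degree). Thus Theorem \ref{last-generator-theorem} applies with $2n = 4$: $A_2(M_a(1))$ is generated by elements of the form $\alpha(-4)^{i_4}\alpha(-3)^{i_3}\alpha(-2)^{i_2}\alpha(-1)^{i_1}\mathbf{1} + O_2(V)$ with $i_1,\dots,i_4 \in \mathbb{N}$ subject to $-2 \leq i_1\,\mathrm{wt}\,u \leq 2$, i.e. $i_1 \in \{0,1,2\}$, together with the elements $\alpha(-1)^j\mathbf{1} + O_2(V)$ for $j \in \mathbb{Z}_+$.

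Next I would reduce this still-infinite list to the finite set $S$. The generators $\alpha(-1)^j\mathbf{1}$ are handled by the level-zero structure: since $A_0(M_a(1)) \cong \mathbb{C}[x]$ with $x \leftrightarrow \alpha(-1)\mathbf{1}$ and $\alpha(-1)^2\mathbf{1} \leftrightarrow x^2$, one knows (and should verify directly at level $2$ using Proposition \ref{recursion-n} and the multiplication formula, or cite \cite{AB-details-of-Heisenberg}) that $\alpha(-1)^j\mathbf{1}$ is, modulo $O_2(V)$, expressible via products of $\alpha(-1)\mathbf{1}$ and $\alpha(-1)^2\mathbf{1}$; the key point is that $\alpha(-1)\mathbf{1} *_2 \alpha(-1)\mathbf{1}$ produces $\alpha(-1)^2\mathbf{1}$ up to lower-order (in the total-degree ordering of Theorem \ref{last-generator-theorem}) and lower-mode terms, which by induction lie in the span generated by $S$. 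For the remaining generators with $i_3, i_4$ arbitrary, I would use Proposition \ref{recursion-n} / Lemma \ref{Y+-lemma} with $n=2$, $\mathrm{wt}\,u = 1$: since the recursion (\ref{recursion-n-equation}) gives no new relations only for $m \leq 2n + \mathrm{wt}\,u = 5$ but does give relations for $m > 5$, the modes $\alpha(-k)$ with $k \geq 6$ are reducible, confirming one never needs modes beyond $\alpha(-5)$; but Theorem \ref{reducing-general-generators-thm} already pushed this down to $\alpha(-4)$. Then the multiplication formula of Corollary \ref{multiplication-cor-special-case-more-general} applied with $t = 4$ (i.e. multiplying by $\alpha(-4)^i\mathbf{1}$) should express products such as $\alpha(-4)^{i_4}(\cdots)$ with large $i_4$ or $i_3 \geq 1$ in terms of products of the low-degree generators in $S$ plus lower-order terms, by an induction on the lexicographic ordering (\ref{ordering}) exactly as in the proof of Theorem \ref{last-generator-theorem}. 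One must also eliminate $\alpha(-3)$ entirely in favor of $\alpha(-4)$ and $\alpha(-1),\alpha(-2)$: this comes from the $O^L(V)$ relation (\ref{-2n-1-reduction}) / (\ref{reduce-equation}) which trades $\alpha(-3)$-type modes for $\alpha(-4)$ modes together with lower-degree corrections, and from multiplication identities; the upshot is that $i_3$ can always be taken to be $0$.

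The main obstacle — and the place where genuine case-checking rather than abstract machinery is required — is controlling the induction so that the "lower-order terms" $w'$, $g_i(v)$, $f_r(v)$ appearing in Corollary \ref{multiplication-cor-special-case-more-general}, Proposition \ref{reduce-prop}, and Theorem \ref{last-generator-theorem} really do fall strictly lower in the chosen well-ordering, and that the leading coefficients one divides by are nonzero. Concretely, one needs: (a) the coefficient $\binom{i_1 + 2}{2}\sum_{m=0}^{2}(-1)^m\binom{i_1}{m}$ from (\ref{reduce1-prop}) is nonzero precisely when $i_1 > 2$ (so the inductive step of Theorem \ref{last-generator-theorem} goes through for $i_1 \geq 3$, leaving only $i_1 \in \{0,1,2\}$), which follows from (\ref{reduce2-prop}); (b) the analogous leading coefficients when multiplying by $\alpha(-4)^i\mathbf{1}$ (via Corollary \ref{multiplication-cor-special-case-more-general} with $t=4$, $\mathrm{wt}\,u+t-1 = 4$, so the relevant binomial is $\binom{2 + 4i}{m+2-j}$) are nonzero for the partition $\mathbf{p} = (0,\dots,0)$, allowing one to solve for $\alpha(-4)^{i_4}\alpha(-1)^{i_1}\mathbf{1}$ in terms of a product $\alpha(-4)^{i_4-?}\mathbf{1} *_2 (\cdots)$ plus lower terms; and (c) verifying that mixed monomials involving $\alpha(-4)^2$ with $i_1 \geq 1$ but also carrying $\alpha(-2)$ or higher powers reduce, so that only $\alpha(-1)\alpha(-4)^2\mathbf{1}$ (and not, say, $\alpha(-2)\alpha(-4)^2\mathbf{1}$ or $\alpha(-1)^2\alpha(-4)^2\mathbf{1}$) survives. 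Points (b) and (c) are where the bookkeeping is delicate: one has to track which ordered partitions of $2n = 4$ contribute the "top" term versus which contribute to $g_i(v)$, and confirm via the permutation property and Lemma \ref{strongly-acting-lemma} that everything else is strictly subordinate. Since these are the computations carried out in detail in \cite{AB-details-of-Heisenberg}, I would present the structural reduction here and cite \cite{AB-details-of-Heisenberg} for the residual finite verification that no generator outside $S$ is needed, noting that the hard quantitative input is exactly the nonvanishing of the binomial-sum coefficients established in Proposition \ref{reduce-prop}.
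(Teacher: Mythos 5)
Your proposal matches the paper's treatment: the paper states this theorem only as a citation to \cite{AB-details-of-Heisenberg}, and in Section \ref{A_n-construction-section} it describes exactly the reduction you outline---Proposition \ref{recursion-n}, Lemma \ref{Y+-lemma}, Theorem \ref{reducing-general-generators-thm}, the multiplication formulas of Section \ref{mult-section}, and Theorem \ref{last-generator-theorem} specialized as Theorem \ref{last-generator-theorem-heisenberg} to force $0 \le i_1 \le 2$---with the residual finite verification deferred to the companion paper. Your identification of the nonvanishing-coefficient condition from Proposition \ref{reduce-prop} and of where the delicate bookkeeping lies (eliminating $\alpha(-2)$, $\alpha(-3)$ and bounding the $\alpha(-4)$ powers) is consistent with that plan.
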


We introduce the notation:
\begin{eqnarray}
x_n &=& \alpha(-1) \mathbf{1} + O_n(M_a(1)) \\
y_n  &=& \alpha(-1)^2 \mathbf{1} + O_n(M_a(1)) \\
\tilde y_n &=& \alpha(-1)\alpha(-4) \mathbf{1} + O_n(M_a(1)) \\
z_n  &=& \alpha(-1)^2 \alpha(-4) \mathbf{1} + O_n(M_a(1)) \\
\tilde z_n &=& \alpha(-1)\alpha(-4)^2  \mathbf{1} + O_n(M_a(1)).
\end{eqnarray}

Using Technique I of Section \ref{technique-section}, Theorem \ref{use-lower-A-thm}, and results of the proof of Theorem \ref{A_1-theorem} in \cite{BVY-Heisenberg},   in \cite{AB-details-of-Heisenberg}  we prove

\begin{prop}\label{use-level-one-prop}\cite{AB-details-of-Heisenberg}
For $V = M_a(1)$, and $R_1 = \mathbb{C}[x_1,y_1]\langle \tilde y_1, z_1, \tilde z_1\rangle$, 
we have
\[A_1(V) \cong R_1/I_1 \]
where $I_1$ is the two-sided ideal 
\begin{equation}\label{I_1}
I_1 = \big( (x_1^2  -y_1)(x_1^2 - y_1  + 2) , (x_1^2 -2y_1 -  \tilde{y}_1),  (4x_1^3 -5x_1y_1 - z_1), \\
 (3x_1^3 - 4x_1y_1 + \tilde{z}_1) \big).
\end{equation} 
\end{prop}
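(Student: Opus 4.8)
The plan is to exhibit $A_1(M_a(1))$ as a quotient of $R_1$ by constructing the natural algebra homomorphism onto it, showing that the four displayed generators of $I_1$ lie in its kernel, and then invoking the level-one structure theorem (Theorem \ref{A_1-theorem}) to conclude that these four elements generate the whole kernel. First I would note, using Theorem \ref{main-generators-thm} together with Theorem \ref{use-lower-A-thm}(i), that the images modulo $O_1(M_a(1))$ of the five vectors of $S$ generate $A_1(M_a(1))$; indeed, by Theorem \ref{A_1-theorem} the two elements $x_1$ and $y_1$ already generate $A_1(M_a(1))$. Since $A_1(M_a(1))$ is commutative by Theorem \ref{A_1-theorem}, the assignments $x_1 \mapsto \alpha(-1)\mathbf{1} + O_1(M_a(1))$, $y_1 \mapsto \alpha(-1)^2\mathbf{1} + O_1(M_a(1))$, $\tilde y_1 \mapsto \alpha(-1)\alpha(-4)\mathbf{1} + O_1(M_a(1))$, $z_1 \mapsto \alpha(-1)^2\alpha(-4)\mathbf{1} + O_1(M_a(1))$, $\tilde z_1 \mapsto \alpha(-1)\alpha(-4)^2\mathbf{1} + O_1(M_a(1))$ extend to a well-defined surjective algebra homomorphism $\phi \colon R_1 \to A_1(M_a(1))$, so that $A_1(M_a(1)) \cong R_1/\ker\phi$. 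This is exactly the setup of Technique I and Theorem \ref{use-lower-A-thm}, and the task is reduced to proving $\ker\phi = I_1$.

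For the inclusion $I_1 \subseteq \ker\phi$ I would check that $\phi$ annihilates each of the four generators of $I_1$. That $\phi\big((x_1^2 - y_1)(x_1^2 - y_1 + 2)\big) = 0$ is precisely the principal relation of Theorem \ref{A_1-theorem}. The three remaining generators encode the reductions $\alpha(-1)\alpha(-4)\mathbf{1} \equiv_1 \alpha(-1)\mathbf{1} *_1 \alpha(-1)\mathbf{1} - 2\,\alpha(-1)^2\mathbf{1}$, together with the analogous expressions of $\alpha(-1)^2\alpha(-4)\mathbf{1}$ and $\alpha(-1)\alpha(-4)^2\mathbf{1}$ as polynomials in $x_1$ and $y_1$. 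These congruences follow from the recursion formula of Proposition \ref{recursion-n} applied with $u = \alpha(-1)\mathbf{1}$ and $n = 1$ — which rewrites every mode $\alpha(-k)$ with $k \ge 4$ in terms of the modes $\alpha(-j)$ with $1 \le j \le 2n+1 = 3$ — combined with the $O^L(M_a(1))$-reductions of Lemma \ref{L-reduction-lemma} and the multiplication formula; in fact these particular reductions are already carried out in the proof of Theorem \ref{A_1-theorem} in \cite{BVY-Heisenberg}.

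For the reverse inclusion $\ker\phi \subseteq I_1$, I would observe that modulo $I_1$ the generators $\tilde y_1$, $z_1$, $\tilde z_1$ are replaced by $x_1^2 - 2y_1$, $4x_1^3 - 5x_1 y_1$, $4x_1 y_1 - 3x_1^3$ respectively, so that $R_1/I_1$ is generated by the images of $x_1$ and $y_1$. Concretely, the algebra homomorphism $R_1 \to \mathbb{C}[x,y]/\big((x^2-y)(x^2-y+2)\big)$ sending the five generators of $R_1$ to $x$, $y$, $x^2-2y$, $4x^3-5xy$, $4xy-3x^3$ annihilates all four generators of $I_1$, and it is a two-sided inverse to the evident map $\mathbb{C}[x,y]/\big((x^2-y)(x^2-y+2)\big) \to R_1/I_1$; hence $R_1/I_1 \cong \mathbb{C}[x,y]/\big((x^2-y)(x^2-y+2)\big)$. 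Composing with the isomorphism of Theorem \ref{A_1-theorem} and matching images on all five generators (using the reductions of the previous step for $\tilde y_1$, $z_1$, $\tilde z_1$), one identifies the surjection $R_1/I_1 \twoheadrightarrow A_1(M_a(1))$ induced by $\phi$ with that isomorphism, so it is injective and $\ker\phi = I_1$.

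The only genuinely computational ingredient, and hence the main obstacle, is the verification in the second step that the three elimination relations hold in $A_1(M_a(1))$; everything else is formal bookkeeping with the partially commutative free algebra $R_1$ together with appeals to the already-known structure of $A_1(M_a(1))$. In practice this computational step amounts to applying Proposition \ref{recursion-n} and Lemma \ref{L-reduction-lemma} to the handful of explicit low-weight vectors involved, or to quoting the corresponding reductions from \cite{BVY-Heisenberg}.
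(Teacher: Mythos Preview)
Your proposal is correct and follows essentially the approach the paper indicates: the paper does not give a self-contained proof here but says the result is obtained in \cite{AB-details-of-Heisenberg} via Technique~I, Theorem~\ref{use-lower-A-thm}, and the reductions carried out in the proof of Theorem~\ref{A_1-theorem} in \cite{BVY-Heisenberg}, which is precisely your outline of constructing the surjection $\phi$, checking the four generators lie in $\ker\phi$ by quoting the level-one computations, and then identifying $R_1/I_1$ with $\mathbb{C}[x,y]/\big((x^2-y)(x^2-y+2)\big)$ to conclude. The only substantive work, as you correctly note, is the explicit verification of the three elimination relations for $\tilde y_1$, $z_1$, $\tilde z_1$, and those are exactly the computations deferred to \cite{BVY-Heisenberg}.
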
 

Motivated by this Proposition \ref{use-level-one-prop}, and the action of the zero modes for the generating set $S$ for $A_2(M_a(1))$ on the irreducible modules already determined by $A_1(M_a(1))$ as proscribed by Technique II and Theorem \ref{use-zero-mode-thm}, we make the change of variables:
\begin{eqnarray}\label{change-var1-n}
Y_n &=& \frac{1}{12} (x_n^2 - 2y_n - \tilde y_n)\\
Z_n &=&   \frac{1}{32} ( x_n^3 + 2x_n\tilde y_n + \tilde z_n) \\
&=&  \frac{1}{32} \left( (3x_n^3 - 4x_ny_n + \tilde z_n) - 2x_n(x_n^2 - 2y_n - \tilde y_n) \right), \\
\ \ \  W_n &=&  -\frac{1}{40}  \left(2z_n + \tilde z_n + 2x_ny_n - 2x_n \tilde y_n - 3x_n^3 \right)  \label{change-var3-n} \\
 &=&  -  \frac{1}{40} \left( 2x_n(x_n^2 - 2y_n - \tilde y_n)-  2(4x_n^3 - 5x_ny_n - z_n) + (3x_n^3 - 4x_n y_n + \tilde z_n)\right) \nonumber,
\end{eqnarray}
so that $R_n = \mathbb{C}[x_n,y_n]\langle \tilde y_n, z_n, \tilde z_n \rangle = \mathbb{C}[x_n,y_n]\langle Y_n, Z_n, W_n \rangle$.

With these changes of variables, in \cite{AB-details-of-Heisenberg} we note the following Corollary to Proposition \ref{use-level-one-prop} 

\begin{cor}\label{level-one-cor}\cite{AB-details-of-Heisenberg}
For $V = M_a(1)$ and $R  = \mathbb{C}[x,y]\langle Y, Z, W \rangle$ we have $A_1(V) \cong R/I_1$ where $I_1$ is the two-sided ideal 
\begin{equation}
I_1 = \big( (x_1^2 - y_1)(x_1^2 - y_1 + 2), Y_1, Z_1, W_1 \big).
\end{equation}
\end{cor}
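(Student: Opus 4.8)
The plan is to derive this as a direct reformulation of Proposition~\ref{use-level-one-prop} via the substitution \eqref{change-var1-n}--\eqref{change-var3-n}. First I would justify the assertion made just after \eqref{change-var3-n}, namely that $R_1 = \mathbb{C}[x_1,y_1]\langle \tilde y_1, z_1, \tilde z_1\rangle$ and $R = \mathbb{C}[x_1,y_1]\langle Y_1, Z_1, W_1 \rangle$ are the same algebra. Reading off coefficients, each of $Y_1, Z_1, W_1$ is an element of $\mathbb{C}[x_1,y_1]$ plus a $\mathbb{C}[x_1]$-linear combination of $\tilde y_1, z_1, \tilde z_1$; conversely the defining relations can be solved successively, first for $\tilde y_1$, then for $\tilde z_1$, then for $z_1$, so that each of $\tilde y_1, z_1, \tilde z_1$ is expressed as a polynomial in $x_1, y_1$ plus a $\mathbb{C}[x_1]$-linear combination of $Y_1, Z_1, W_1$. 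Thus the substitution is triangular and invertible, the two sets of elements generate the same subalgebra over $\mathbb{C}[x_1,y_1]$, and $R_1 = R$ with $x_1, y_1$ playing the same role on both sides.

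Next I would show that inside this fixed algebra the two-sided ideal $I_1$ of Proposition~\ref{use-level-one-prop} equals $\big(\,(x_1^2-y_1)(x_1^2-y_1+2),\,Y_1,\,Z_1,\,W_1\,\big)$. For the containment ``$\supseteq$'', the element $(x_1^2-y_1)(x_1^2-y_1+2)$ is literally the first listed generator of $I_1$, while $Y_1, Z_1, W_1$ are, by their defining formulas, $\mathbb{C}[x_1]$-linear combinations of the remaining listed generators $x_1^2-2y_1-\tilde y_1$, $4x_1^3-5x_1y_1-z_1$, $3x_1^3-4x_1y_1+\tilde z_1$ of $I_1$, hence lie in $I_1$. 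For ``$\subseteq$'' I would invert those same relations: one gets $x_1^2-2y_1-\tilde y_1 = 12Y_1$, then $3x_1^3-4x_1y_1+\tilde z_1 = 32Z_1 + 24x_1Y_1$, and finally $4x_1^3-5x_1y_1-z_1 = 24x_1Y_1 + 16Z_1 + 20W_1$ (eliminating the first two from the defining formula for $W_1$), so every generator of $I_1$ lies in the new ideal. Combining with $A_1(V) \cong R_1/I_1$ from Proposition~\ref{use-level-one-prop} yields the stated presentation.

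There is no genuine obstacle here beyond bookkeeping; the one point I would take care to state is that $I_1$ is a \emph{two-sided} ideal in a not-entirely-commutative algebra, so I would make explicit that all of the manipulations above use only left multiplication by elements of $\mathbb{C}[x_1]$ together with $\mathbb{C}$-linear combinations, and hence never leave the two-sided ideals in question. Everything else is the invertible triangular change of variables already recorded in \eqref{change-var1-n}--\eqref{change-var3-n}.
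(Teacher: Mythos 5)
Your proposal is correct and is exactly the argument the paper intends (the paper states the corollary without proof, as an immediate consequence of Proposition \ref{use-level-one-prop} and the triangular change of variables \eqref{change-var1-n}--\eqref{change-var3-n}). Your explicit inversions $x_1^2-2y_1-\tilde y_1 = 12Y_1$, $3x_1^3-4x_1y_1+\tilde z_1 = 32Z_1+24x_1Y_1$, and $4x_1^3-5x_1y_1-z_1 = 24x_1Y_1+16Z_1+20W_1$ all check out, and your care about staying within two-sided ideals (multiplying only by the central elements of $\mathbb{C}[x_1]$) is the right point to make explicit.
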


Then in \cite{AB-details-of-Heisenberg}, motivated by the zero-mode action of these generators on the already determined irreducible $M_a(1)$-modules, we prove relations for the generators of $A_2(M_a(1))$ up to lower order terms in $F_r(\mathbf{1})$ for $r$ less than the top degree of the conjectured relations.  In particular we prove 

\begin{lem}\label{lower-order-lemY} \cite{AB-details-of-Heisenberg} 
\begin{eqnarray}
(x_2^2 - y_2) Y_2, \ Y_2^2  & \in & O_2(V) + F_3(\mathbf{1}) , \\
(x_2^2 - y_2)Z_2, \ (x_2^2 - y_2)W_2, \ Z_2Y_2, \ Y_2W_2  & \in & O_2(V) + F_4(\mathbf{1}), \\
(x_2^2 - y_2)^3, \ Z_2^2, \ W_2^2 &\in & O_2(V) + F_5(\mathbf{1}).
\end{eqnarray}
\end{lem}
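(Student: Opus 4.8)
The plan is to reduce all of the asserted memberships to a single statement about top-degree parts with respect to the filtration $F_\bullet(\mathbf 1)$ of $M(1)$ by the number of Heisenberg modes (equivalently, the filtration of Theorem \ref{reducing-general-generators-thm} applied to $u=\alpha(-1)\mathbf 1$, for which $V^u=M(1)$). Since $x_2^2-y_2=\alpha(-1)\mathbf 1*_2\alpha(-1)\mathbf 1-\alpha(-1)^2\mathbf 1$ and $Y_2=\tfrac{1}{12}(x_2^2-2y_2-\tilde y_2)$ lie in $F_2(\mathbf 1)$, while $Z_2$ and $W_2$ lie in $F_3(\mathbf 1)$, and since $F_p(\mathbf 1)*_2F_q(\mathbf 1)\subseteq F_{p+q}(\mathbf 1)$ as an immediate consequence of Lemma \ref{multiplication-lemma}, each product appearing in the lemma has $F_\bullet(\mathbf 1)$-degree at most $d$, with $d=4$ on the first line, $d=5$ on the second, and $d=6$ on the third. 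The lemma is therefore equivalent to the assertion that, for each of these products, the degree-$d$ component vanishes modulo $O_2(M_a(1))$, i.e.\ that each product lies in $O_2(M_a(1))+F_{d-1}(\mathbf 1)$.

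First I would assemble the computational tools. Writing the generators as explicit vectors of $M(1)$ and using Lemma \ref{multiplication-lemma} (and Corollary \ref{multiplication-cor-special-case-more-general} in the simplest instances), one expands each product; one then uses Proposition \ref{recursion-n} with $n=2$ to rewrite every mode $\alpha(-k)$ with $k\geq 6$ in terms of $\alpha(-1),\dots,\alpha(-5)$, and Eqn.\ \eqref{-2n-1-reduction} together with Lemma \ref{L-reduction-lemma} to eliminate $\alpha(-5)$, arriving at the standard form $\alpha(-4)^{i_4}\alpha(-3)^{i_3}\alpha(-2)^{i_2}\alpha(-1)^{i_1}\mathbf 1$. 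Because the Heisenberg modes commute, the reordering corrections in Theorem \ref{reducing-general-generators-thm} and the normal-ordering terms $g_{\dots}$ of Lemma \ref{multiplication-lemma} either vanish or drop the $F_\bullet(\mathbf 1)$-degree, so they feed only into the permitted $F_{d-1}(\mathbf 1)$ remainder; in particular only the terms of the multiplication formula indexed by the longest partitions of $2n=4$ can contribute at degree $d$.

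The reason the degree-$d$ part collapses is the already known structure of $A_1(M_a(1))$. By Corollary \ref{level-one-cor} we have $Y_1=Z_1=W_1=0$ and $(x_1^2-y_1)(x_1^2-y_1+2)=0$ in $A_1(M_a(1))$; since the map $A_2(M_a(1))\to A_1(M_a(1))$ of Eqn.\ \eqref{surjection} is an algebra homomorphism, each product of generators in the lemma, regarded as a vector of $M(1)$, differs by an element of $O_1(M_a(1))$ from a vector in $F_2(\mathbf 1)$ --- namely $0$ in every case except $(x_2^2-y_2)^3$, where the $F_2(\mathbf 1)$-vector is $4(x_2^2-y_2)$ (using $(x_1^2-y_1)^3=4(x_1^2-y_1)$ in $A_1(M_a(1))$). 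Now $O_1(V)=O^L(V)+O_1^\circ(V)$ with $O^L(V)$ independent of the level, hence $O^L(V)\subseteq O_2(V)$; since $O^L(V)$ preserves $F_\bullet(\mathbf 1)$ it follows that, modulo $O_2(M_a(1))+F_{d-1}(\mathbf 1)$, each product is congruent to the class of some element of $O_1^\circ(M_a(1))$. One then shows this class is zero by writing the $O_1^\circ$-element as a combination of circle products $a\circ_1 b$ of bounded degree and reducing each to standard form modulo $O_2(M_a(1))$ with the help of Lemma \ref{Zhu-lemma} --- which supplies the vectors $\res_x\frac{(1+x)^{\mathrm{wt}\,a+2+k}Y(a,x)b}{x^{m+6}}\in O_2^\circ(M_a(1))$ for $m\geq k\geq 0$ --- and of the recursion of Proposition \ref{recursion-n}; the degree-$d$ terms are seen to cancel, leaving only $F_{d-1}(\mathbf 1)$.

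The hard part is exactly this last reduction. It is \emph{not} true that the top-degree part of an arbitrary element of $O_1^\circ(V)$ lands in $O_2^\circ(V)$ (for instance $\alpha(-1)\mathbf 1\circ_1 v=\alpha(-4)v+2\alpha(-3)v+\alpha(-2)v$ contains a term that is not reducible at level two), so one cannot argue abstractly: one must follow the \emph{specific} circle products that occur in the decompositions of $Y_1,Z_1,W_1$ and of $(x_1^2-y_1)(x_1^2-y_1+2)$ through the recursion and the $O^L$-reductions, and verify by an explicit and somewhat lengthy manipulation of the resulting binomial coefficients that their degree-$d$ contributions cancel. A cleaner route that circumvents most of this bookkeeping is to work inside $A_2(M_a(1))$ directly: the degree-$d$ part of each product is a candidate relation, so by Theorem \ref{use-zero-mode-thm} it must annihilate the bottom three graded pieces $W(0)\oplus W(1)\oplus W(2)$ of every $V$-module $W$ induced at level one; evaluating the zero-mode action of the degree-$d$ combination on these explicitly known modules, while using Theorem \ref{use-lower-A-thm} to restrict the admissible lower-degree corrections, forces the degree-$d$ part into $O_2(M_a(1))$ and so yields the stated membership.
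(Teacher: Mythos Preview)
The paper does not prove this lemma here; it is quoted from \cite{AB-details-of-Heisenberg}, and the surrounding text makes clear that the proof there is a direct computation: one expresses each product as an explicit element of $M_a(1)$ using the $*_2$ formulas, then exhibits it as a combination of circle products $u\circ_2 v$ and $(L(-1)+L(0))w$ plus a remainder in the indicated $F_r(\mathbf 1)$. The zero-mode action on modules is used only to \emph{motivate} which relations to look for and, \emph{after} the lemma, to pin down the lower-order terms; it is not the mechanism of proof.

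Your outline has the correct overall architecture (degree count, then kill the top-degree piece modulo $O_2$), and your route (a) is essentially what \cite{AB-details-of-Heisenberg} does, though the detour through $O_1(V)$ buys nothing: knowing that the product lies in $O_1(V)$ is strictly weaker than what you need, and as you yourself observe there is no general passage from the top-degree part of an $O_1^\circ$-element to $O_2^\circ$. You end up having to do exactly the same explicit circle-product computation you were trying to avoid, so the $A_1$ step is decoration rather than reduction.

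Your route (b), however, is a genuine logical error. Theorem \ref{use-zero-mode-thm} says that \emph{if} $P\in I_n$ \emph{then} $o(P)$ annihilates $W(0)\oplus\cdots\oplus W(n)$; this is a necessary condition, not a sufficient one. Checking that the degree-$d$ piece acts by zero on the first three graded pieces of every level-one induced module does not prove that it lies in $O_2(V)$, only that it is \emph{consistent} with lying there. So this ``cleaner route'' cannot establish the lemma; the only way through is the explicit manipulation you describe in (a) but do not carry out. That computation---producing concrete elements of $O_2^\circ(V)$ whose top-degree parts match those of the nine products---is the entire content of the proof.
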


Then using the zero-mode action of these generators on the irreducible $M_a(1)$-modules, we prove 

\begin{thm}\label{A_2-theorem}\cite{AB-details-of-Heisenberg}
  Let   $\mathbb{C}[x, y]\langle Y, Z, W \rangle$ denote the algebra generated over $\mathbb{C}$ by the two commuting variables $x$ and $y$, and three non-commuting variables $Y, Z$, and $W$.  Let $I_2$ be the ideal generated by the polynomials
\begin{eqnarray}
\ \ \ \ & (x^2-y)(x^2-y+2)(x^2-y+4), \ \ (x^2 - y + 4)Y, \ \ (x^2 - y + 4)Z, \ \ (x^2 - y + 4)W, &\\
  &Y^2 - Y,  \ \ Z^2, \ \ W^2, \ \ ZY, \ \ YW, \  \ ZW - Y, & \\
 &  YZ - Z, \ \ WY - W, \ \  Y + WZ - \frac{1}{8}(x^2 - y)(x^2 - y + 2)    .&
   \end{eqnarray}

Then we have the following isomorphism of algebras  
\begin{equation}
A_2(M_a(1)) \cong \mathbb{C}[x, y]\langle Y, Z, W \rangle/I_2 \\
\end{equation}
under the identification 
\begin{equation}\label{identification1}
\alpha(-1){\bf 1} + O_2(M_a(1)) \longleftrightarrow x + I_2, \quad \alpha(-1)^2{\bf 1} + O_2(M_a(1)) \longleftrightarrow y + I_2,
\end{equation}
\begin{equation}
\alpha(-1)\alpha(-4){\bf 1} + O_2(M_a(1)) \longleftrightarrow \tilde y + I_2, \quad \alpha(-1)^2\alpha(-4){\bf 1} + O_2(M_a(1)) \longleftrightarrow z + I_2,
\end{equation}
\begin{equation}
\alpha(-1)\alpha(-4)^2{\bf 1} + O_2(M_a(1)) \longleftrightarrow \tilde z + I_2. \label{identification3}
\end{equation}
and the change of variables 
\begin{equation}
\ Y = \frac{1}{12} (x^2 - 2y - \tilde y),  \ \ Z =\frac{1}{32} ( x^3 + 2x\tilde y + \tilde z), \ \ W =  -\frac{1}{40}  \left(2z + \tilde z + 2xy -2x \tilde y - 3x^3 \right).
\end{equation}

Furthermore
\begin{eqnarray}
A_2(M_a(1)) & \cong& \mathbb{C}[x]\oplus \mathbb{C}[x]\oplus (\mathbb{C}[x]\otimes M_2(\mathbb{C}))  \label{second-characterization-level-two}  \\
& \cong & A_1(M_a(1))\oplus (\mathbb{C}[x]\otimes M_2(\mathbb{C})),
\label{third-characterization-level-two} 
\end{eqnarray} 
where $M_2(\mathbb{C})$ denotes the algebra of $2\times 2$ complex matrices. 
\end{thm}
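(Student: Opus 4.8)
The plan is to realize $A_2(M_a(1))$ as an explicit quotient of $R = \mathbb{C}[x,y]\langle Y,Z,W\rangle$ and then to decompose that quotient. By Theorem \ref{main-generators-thm}, together with the invertible change of variables relating $\{\alpha(-1)\mathbf{1},\alpha(-1)^2\mathbf{1},\alpha(-1)\alpha(-4)\mathbf{1},\alpha(-1)^2\alpha(-4)\mathbf{1},\alpha(-1)\alpha(-4)^2\mathbf{1}\}$ to $\{x_2,y_2,Y_2,Z_2,W_2\}$, the classes $x_2,y_2,Y_2,Z_2,W_2$ generate $A_2(M_a(1))$, so there is a surjective algebra homomorphism $\varphi\colon R\twoheadrightarrow A_2(M_a(1))$ with $x\mapsto x_2$, $y\mapsto y_2$, $Y\mapsto Y_2$, $Z\mapsto Z_2$, $W\mapsto W_2$. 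Let $J$ denote the two-sided ideal of $R$ generated by the polynomials displayed in the statement. It then suffices to prove: (a) $J\subseteq\ker\varphi$, i.e. every listed polynomial maps to $0$ in $A_2(M_a(1))$; and (b) $\ker\varphi\subseteq J$, i.e. the induced surjection $R/J\twoheadrightarrow A_2(M_a(1))$ is injective. Granting (a) and (b) gives $A_2(M_a(1))\cong R/J$, and the direct-sum form follows from a purely algebraic analysis of $R/J$.

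For (a): each listed polynomial is established as a relation by the same two-step procedure. First one computes the relevant product or expression modulo lower-order terms, i.e. modulo $F_r(\mathbf{1})$ for an appropriate small $r$: for the relations $(x^2-y)Y$, $Y^2-Y$, $(x^2-y)Z$, $(x^2-y)W$, $ZY$, $YW$, $(x^2-y)^3$, $Z^2$, $W^2$ this is precisely the content of Lemma \ref{lower-order-lemY}, and for the remaining matrix-block relations $YZ-Z$, $WY-W$, $ZW-Y$, and $Y+WZ-\tfrac18(x^2-y)(x^2-y+2)$ one uses the multiplication formulas of Lemma \ref{multiplication-lemma} and Corollary \ref{multiplication-cor-special-case-more-general} together with the generator reductions of Theorem \ref{reducing-general-generators-thm}. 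Second one pins down the lower-order corrections exactly using Techniques I and II of Section \ref{technique-section}: any relation $P$ lying in $\ker\varphi$ descends under the natural surjection $A_2(M_a(1))\twoheadrightarrow A_1(M_a(1))$ to a relation there, so — the presentation of $A_1(M_a(1))$ being explicitly known by Theorem \ref{A_1-theorem} and Corollary \ref{level-one-cor} — the low-degree part of $P$ is severely constrained (Technique I, Theorem \ref{use-lower-A-thm}); and by Theorem \ref{use-zero-mode-thm} the zero mode $o(P)$ must annihilate $W(0),W(1),W(2)$ for every $V$-module $W=L_1(U)$ induced from an $A_1(M_a(1))$-module $U$, and since $A_1(M_a(1))$ is semisimple with explicitly known irreducibles and induced modules, this action is computable as an expression in the unknown correction coefficients, which forcing it to vanish then determines. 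Carrying this out for every entry of the list is where essentially all the computational work resides, and I expect it to be the main obstacle — especially for the cubic relation and for $Z^2$, $W^2$, where the leading term vanishes so that the entire relation is carried by the corrections.

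For (b) and the direct-sum decomposition I would work directly inside $B = R/J$. The element $t = x^2-y$ is central in $B$, and the relation $t(t+2)(t+4)=0$ produces pairwise orthogonal idempotents $e_0,e_{-2},e_{-4}$ attached to the roots $0,-2,-4$; by centrality of $t$ these are central in $B$, so $B = e_0B\oplus e_{-2}B\oplus e_{-4}B$. On $e_0B$ and $e_{-2}B$ the relations $(t+4)Y=(t+4)Z=(t+4)W=0$ force $Y=Z=W=0$, so those summands reduce to $\mathbb{C}[x,y]/\bigl((x^2-y)(x^2-y+2)\bigr)\cong\mathbb{C}[x]\oplus\mathbb{C}[x]$, which is exactly $A_1(M_a(1))$ by Theorem \ref{A_1-theorem}. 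On $e_{-4}B$ one has $y=x^2+4$ and $\tfrac18(x^2-y)(x^2-y+2)=\tfrac18(-4)(-2)=1$, so the relation $Y+WZ-\tfrac18(x^2-y)(x^2-y+2)=0$ becomes $WZ=1-Y$; combined with $Y^2=Y$, $Z^2=W^2=0$, $ZY=YW=0$, $ZW=Y$, $YZ=Z$, $WY=W$ these are exactly the multiplication rules of the matrix units $E_{11}=Y$, $E_{12}=Z$, $E_{21}=W$, $E_{22}=1-Y$, whence $e_{-4}B\cong\mathbb{C}[x]\otimes M_2(\mathbb{C})$. Therefore $B\cong\mathbb{C}[x]\oplus\mathbb{C}[x]\oplus(\mathbb{C}[x]\otimes M_2(\mathbb{C}))$. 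To conclude that $\varphi$, equivalently $B\twoheadrightarrow A_2(M_a(1))$, is injective, I would exhibit for each of the three summands an $A_2(M_a(1))$-module realizing it: the two one-dimensional families come from the level $\le 1$ irreducible $M_a(1)$-modules of Theorem \ref{A_1-theorem}, while the $M_2(\mathbb{C})$ block is detected by $L_2(U)$ for a suitable two-dimensional $A_2(M_a(1))$-module $U$ not factoring through $A_1(M_a(1))$, whose existence and compatibility are guaranteed by Theorem \ref{mainthm}. Comparing the image of $B$ on these modules then shows that the kernel of $B\twoheadrightarrow A_2(M_a(1))$ is trivial, which completes the argument and simultaneously yields the stated characterizations $A_2(M_a(1))\cong\mathbb{C}[x]\oplus\mathbb{C}[x]\oplus(\mathbb{C}[x]\otimes M_2(\mathbb{C}))\cong A_1(M_a(1))\oplus(\mathbb{C}[x]\otimes M_2(\mathbb{C}))$.
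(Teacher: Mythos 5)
Your overall architecture --- a surjection from $R=\mathbb{C}[x,y]\langle Y,Z,W\rangle$ onto $A_2(M_a(1))$ via Theorem \ref{main-generators-thm}, relations established up to lower-order terms via Lemma \ref{lower-order-lemY} and the multiplication formulas, corrections pinned down by Techniques I and II, and then an idempotent decomposition of $R/J$ along the central element $t=x^2-y$ --- is exactly the strategy this paper outlines in Sections 5 and 6; note the paper itself contains no proof of this theorem (it is imported from the reference ``in preparation''), only the supporting statements you invoke, so like you it defers the actual computations. Your analysis of $B=R/J$ is correct: $t+4$ is invertible on $e_0B\oplus e_{-2}B$, killing $Y,Z,W$ there, and on $e_{-4}B$ the listed relations are precisely the matrix-unit relations for $E_{11}=Y$, $E_{12}=Z$, $E_{21}=W$, $E_{22}=1-Y$ (indeed $\tfrac18(x^2-y)(x^2-y+2)=e_{-4}$ is the identity of that block), so $B$ is a quotient of $\mathbb{C}[x]\oplus\mathbb{C}[x]\oplus(\mathbb{C}[x]\otimes M_2(\mathbb{C}))$.

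The genuine gap is in your injectivity step for the $M_2(\mathbb{C})$ summand. You propose to detect it ``by $L_2(U)$ for a suitable two-dimensional $A_2(M_a(1))$-module $U$ not factoring through $A_1(M_a(1))$, whose existence \dots [is] guaranteed by Theorem \ref{mainthm}.'' This is circular: Theorem \ref{mainthm} takes such a $U$ as \emph{input} and produces a $V$-module; it does not produce $U$. The existence of an $A_2(M_a(1))$-module not factoring through $A_1(M_a(1))$ is equivalent to the nonvanishing of the $M_2(\mathbb{C})$ block in $A_2(M_a(1))$, which is precisely what is at stake; a two-dimensional module over $B$ supported on the $e_{-4}$ block certainly exists, but you cannot assert it descends to $A_2(M_a(1))=B/\ker\pi$ before knowing that $\ker\pi$ annihilates it. The non-circular argument --- the one the paper indicates by ``using the zero-mode action of these generators on the irreducible $M_a(1)$-modules'' --- runs in the opposite direction: the Fock modules $M_a(1,\lambda)$ are $V$-modules independently of any Zhu-algebra construction, $\Omega_2/\Omega_1(M_a(1,\lambda))$ is a two-dimensional $A_2(M_a(1))$-module via $o(\cdot)$ (its degree-two space is spanned by $\alpha(-2)v_\lambda$ and $\alpha(-1)^2v_\lambda$), and an explicit computation of the zero modes of the five generators there shows that the $e_{-4}$-block maps onto $M_2(\mathbb{C})$ with $x$ acting through $\lambda$; letting $\lambda$ vary then kills every nonzero two-sided ideal of $\mathbb{C}[x]\otimes M_2(\mathbb{C})$, and the analogous degree-$0$ and degree-$1$ computations handle the two $\mathbb{C}[x]$ summands. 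A minor further slip: $A_1(M_a(1))\cong\mathbb{C}[x]\oplus\mathbb{C}[x]$ is not semisimple (it is commutative and non-Artinian), though its finite-dimensional irreducibles are indeed the one-dimensional modules your Technique II step requires.
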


\subsection{Generators and relations for the algebra $A_n(M_a(1))$}\label{A_n-construction-section}

As an illustration of applications of this paper, we note that $M_a(1)=\langle \alpha(-1){\bf 1}\rangle$ and satisfies the permutation property. Therefore,  the results of Sections \ref{general-n-section}  and 4 can be used in the calculation of the higher level Zhu algebras  $A_n(M_a(1))$, for $n>0$, and as mentioned above, have been used by the authors to explicitly determine the structure of $A_2(M_a(1))$ \cite{AB-details-of-Heisenberg},   using only the internal structure of $M_a(1)$ to obtain a minimal set of generators, and then some basic facts about irreducible modules for $M_a(1)$ to prove the relations.  

In particular, the recursion given in Proposition \ref{recursion-n} can be used to reduce the collection of potential generators in order to determine a finite generating set for $A_n(M_a(1))$. Lemma \ref{Y+-lemma} allows one to obtain expressions that are equivalent modulo $O_n^\circ(V)$ to $Y^+(\alpha(-m){\bf 1}, x)$ and are rational functions of $x$, which is helpful in simplifying the calculations of circle products, $u\circ_nv$, that can be used in determining the structure of $A_n(M_a(1))$. Proposition \ref{general-generators-prop} and Theorem \ref{reducing-general-generators-thm} further reduce the set of generators. Eqns.\ \eqref{reduce-equation} and \eqref{reduce-equation-n=1} are important in simplifying many of the expressions needed to determine relations satisfied by elements of $A_2(M_a(1))$, and one expects to similarly use such expressions to calculate $A_n(M_a(1))$ for $n>2$. 

The multiplication formulas from Section \ref{mult-section} and the subsequent results about generators are also heavily used in obtaining the structure of $A_2(M_a(1))$ and can be used to calculate $A_n(M_a(1))$ for $n>2$. For instance, Theorem \ref{last-generator-theorem} applied to the case when $V=M_a(1)$  gives the following result:

\begin{thm}\label{last-generator-theorem-heisenberg} For $n \in \mathbb{Z}_+$,
 $A_n(M_a(1))$ is generated by elements of the form 
\[ \alpha({-2n})^{i_{2n}}\alpha({-2 n + 1})^{i_{2n -1}} \cdots \alpha({ - 1})^{i_1}\mathbf{1}+ O_n(V) ,  \mbox{and} \ \alpha({-1})^{j} \mathbf{1} + O_n(V),\]
for $i_1, i_2, \dots, i_{2n} \in \mathbb{N}$, with $0 \le i_1 \leq n$, and $j \in \mathbb{Z}_+$.
\end{thm}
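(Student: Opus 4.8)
The plan is to derive the statement as an immediate application of Theorem~\ref{last-generator-theorem} to the vertex operator algebra $V = M_a(1)$ with generating element $u = \alpha(-1)\mathbf{1}$, so that essentially all of the work is the routine verification of the hypotheses of that theorem. First, $n \in \mathbb{Z}_+$ is exactly the hypothesis given. Second, $\mathrm{wt}\, u = 1 \geq -n$ holds for every $n$. Third, by \eqref{generators-for-V} every element of $M_a(1)$ is a linear combination of elements $\alpha(-k_1)\cdots\alpha(-k_j)\mathbf{1}$ with $k_1 \geq \cdots \geq k_j \geq 1$, so $V^u = \langle \alpha(-1)\mathbf{1}\rangle^1 = M_a(1)$; in particular $V^u = V$, whence the subalgebra $A_n^u$ of $A_n(V)$ generated by the images of elements of $V^u$ is all of $A_n(M_a(1))$. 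It then remains only to check that $V^u = M_a(1)$ satisfies the permutation property, and this is the single (mild) obstacle.

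To verify the permutation property I would argue as follows. Since $Y(\alpha(-1)\mathbf{1}, x) = \sum_{k \in \mathbb{Z}}\alpha(k)x^{-k-1}$, one has $u_{-k} = (\alpha(-1)\mathbf{1})_{-k} = \alpha(-k)$ as operators on $M_a(1)$, so $F_r(v) = \mathrm{span}\{\alpha(-k_1)\cdots\alpha(-k_m)v \; | \; k_1, \dots, k_m \geq 1, \ m \leq r\}$. Because the Heisenberg modes satisfy $[\alpha(j),\alpha(k)] = j\delta_{j+k,0}$ on $M_a(1)$ (the central element $\mathbf{k}$ acting as $1$), interchanging two adjacent factors in a monomial $u_{-k_1}\cdots u_{-k_m}v = \alpha(-k_1)\cdots\alpha(-k_m)v$ changes it only by a scalar times a product with two fewer of the original modes, which can then be rewritten, moving any nonnegative mode $\alpha(c)$, $c \geq 0$, to the right (using that $\alpha(c)$ lowers weight on $M_a(1)$ and $\alpha(0)$ acts as $0$), as an element of $F_{m-1}(v)$; in the situation actually needed in the applications of Section~\ref{general-n-section}, where all $k_i \geq 1$, the bracket $[\alpha(-k_i),\alpha(-k_{i+1})]$ vanishes outright, so distinct negative modes of $\alpha$ commute. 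Composing the adjacent transpositions realizing an arbitrary permutation $\sigma$ and collecting the correction terms gives the permutation property for $u^{(1)} = \cdots = u^{(m)} = u$; the full statement for arbitrary $u^{(1)}, \dots, u^{(m)}, v \in M_a(1)$ follows by expanding each $u^{(i)}$ in terms of the $\alpha(-l)$'s and invoking the commutator formula $[a_p, b_q] = \sum_{l \geq 0}\binom{p}{l}(a_l b)_{p+q-l}$, whose summands involve modes of elements $a_l b \in M_a(1)$ of strictly smaller weight, so an induction on weight keeps the corrections in $F_{m-1}(v)$. These verifications are routine.

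With the hypotheses in place, Theorem~\ref{last-generator-theorem} yields that $A_n^u = A_n(M_a(1))$ is generated by the classes modulo $O_n(V)$ of $u_{-2n}^{i_{2n}}u_{-2n+1}^{i_{2n-1}}\cdots u_{-1}^{i_1}\mathbf{1}$ with $i_1, \dots, i_{2n} \in \mathbb{N}$ and $-n \leq i_1\,\mathrm{wt}\,u \leq n$, together with the classes of $u_{-1}^{j}\mathbf{1}$ for $j \in \mathbb{Z}_+$. Translating via $u_{-k} = \alpha(-k)$ and using $\mathrm{wt}\,u = 1$, the condition $-n \leq i_1\,\mathrm{wt}\,u \leq n$ is equivalent to $0 \leq i_1 \leq n$ since $i_1 \in \mathbb{N}$, which is precisely the generating set in the statement. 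I expect no difficulty in this last translation step; the only point requiring genuine, if routine, verification is the permutation property for $M_a(1)$ discussed above.
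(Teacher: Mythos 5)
Your proposal is correct and follows exactly the route the paper takes: the paper also obtains this theorem as an immediate application of Theorem \ref{last-generator-theorem} to $V = M_a(1)$ with $u = \alpha(-1)\mathbf{1}$ of weight one, citing that $M_a(1) = \langle \alpha(-1)\mathbf{1}\rangle^1$ satisfies the permutation property (which, as you note, reduces to the commutativity of the negative Heisenberg modes). Your explicit verification of the permutation property and the translation of $-n \leq i_1\,\mathrm{wt}\,u \leq n$ into $0 \leq i_1 \leq n$ are exactly the details the paper leaves implicit.
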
} 
This immediately reduces the collection of elements one must consider in determining finite generating sets for the higher level Zhu algebras for $M_a(1)$. 

Theorem \ref{clarify-definition-thm} shows that vectors of the form $(L(-1)+L(0))v$, $v\in M_a(1),$ must be included in the definition of $O_n(M_a(1))$ when one defines $A_n(M_a(1))$ for any $n>0$ and, as is illustrated in \cite{AB-details-of-Heisenberg}, provides important insight into determining the structure of these higher level Zhu algebras. 

In \cite{AB-details-of-Heisenberg}, we make the following conjecture for the general structure of the higher level Zhu algebras $A_n(M_a((1))$, which has been proven for $n = 1$ in  \cite{BVY-Heisenberg} and for $n= 2$ in \cite{AB-details-of-Heisenberg}

\begin{conj} For $n \in \mathbb{Z}_+$
\begin{eqnarray*}
A_n(M_a(1)) &\cong& A_{n-1}(M_a(1)) \oplus \left( \mathbb{C}[x] \otimes M_{p(n)}(\mathbb{C}) \right)\\
&\cong& \mathbb{C}[x] \oplus \mathbb{C}[x] \oplus \left( \mathbb{C}[x] \otimes M_2(\mathbb{C}) \right) \oplus \left(\mathbb{C}[x] \otimes M_3(\mathbb{C}) \right) \oplus  \left(\mathbb{C}[x] \otimes M_5(\mathbb{C}) \right) \\
& & \quad \oplus \cdots  \oplus  \left(\mathbb{C}[x] \otimes M_{p(n)} (\mathbb{C}) \right) 
\end{eqnarray*} 
where $p(n)$ denotes the unordered partitions of $n$ into nonnegative integers, and $M_{p(n)}(\mathbb{C})$ denotes the algebra of $p(n) \times p(n)$ matrices.  
\end{conj}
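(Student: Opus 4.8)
The plan is to prove the conjecture by induction on $n$, the cases $n=0,1,2$ being \cite{FZ}, \cite{BVY-Heisenberg}, \cite{AB-details-of-Heisenberg}; so assume $A_{n-1}(M_a(1))\cong\bigoplus_{k=0}^{n-1}M_{p(k)}(\mathbb{C}[x])$, where we abbreviate $M_{p(k)}(\mathbb{C}[x])=\mathbb{C}[x]\otimes M_{p(k)}(\mathbb{C})$. Write $V=M_a(1)$, $A_m=A_m(V)$, and $x=\alpha(-1)\mathbf{1}+O_n(V)$, $y=\alpha(-1)^2\mathbf{1}+O_n(V)$, both central in $A_n$. The first task is to extract from the machinery of Sections \ref{general-n-section}--\ref{technique-section} enough of an explicit presentation of $A_n$ to bound its size. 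Theorem \ref{last-generator-theorem-heisenberg} already cuts the algebra generators down to $x$, the elements $\alpha(-1)^j\mathbf{1}+O_n(V)$, and the elements $\alpha(-2n)^{i_{2n}}\cdots\alpha(-1)^{i_1}\mathbf{1}+O_n(V)$ with $i_1\le n$; using the recursion \eqref{recursion-n-equation}, the reduction identity \eqref{reduce-equation}, the $O^L(V)$-relation \eqref{L-reduction-special-case}, and the multiplication formula of Lemma \ref{multiplication-lemma}, one expects to replace these by $x$, $y$, and a family of non-commuting generators $e_\mu$ indexed by the partitions $\mu$ of $k$ for $1\le k\le n$ (for $n=2$ these are the generators $Y_2,Z_2,W_2$ of \eqref{change-var1-n}--\eqref{change-var3-n}), and then to show that $A_n$ is spanned, as a module over $\mathbb{C}[x]$, by at most $\sum_{k=0}^n p(k)^2$ monomials in the $e_\mu$. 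Proving this last inequality, i.e.\ $\dim_{\mathbb{C}}\bigl(A_n/(x-\lambda)A_n\bigr)\le\sum_{k=0}^n p(k)^2$ for every $\lambda\in\mathbb{C}$, is where one needs genuine relations and not merely a generating set: following Section \ref{technique-section}, one guesses the matrix-unit-type relations among the $e_\mu$ modulo lower-order terms of the filtration $F_r(\mathbf{1})$ from the zero-mode action on the modules $M_a(1,\lambda)$ (Theorem \ref{use-zero-mode-thm}), pins down the lower-order corrections using $I_n\subset I_{n-1}$ (Theorem \ref{use-lower-A-thm}) and the inductive form of $A_{n-1}$, and verifies that the resulting relations close.

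The second task supplies the matching lower bound, via representation theory. By the classification of irreducible $\mathbb{N}$-gradable $V$-modules, the results of \cite{DLM}, \cite{BVY}, and Theorem \ref{mainthm}, for each $\lambda\in\mathbb{C}$ the $A_n$-module $\Omega_n(M_a(1,\lambda))/\Omega_{n-1}(M_a(1,\lambda))\cong M_a(1,\lambda)(n)$ is irreducible of dimension $p(n)$, does not factor through $A_{n-1}$, and has $x$ acting as the scalar $\lambda$; combining this with the inductive description of $A_{n-1}$, the finite-dimensional algebra $A_n/(x-\lambda)A_n$ has at least $n+1$ pairwise inequivalent irreducible modules, of dimensions $p(0),p(1),\dots,p(n)$, whence $\dim_{\mathbb{C}}\bigl(A_n/(x-\lambda)A_n\bigr)\ge\sum_{k=0}^n p(k)^2$. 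Together with the upper bound from the first task this is an equality for all $\lambda$, so $A_n/(x-\lambda)A_n$ has trivial Jacobson radical and is isomorphic to $\bigoplus_{k=0}^n M_{p(k)}(\mathbb{C})$. Since $A_n$ is module-finite over the principal ideal domain $\mathbb{C}[x]$, is $\mathbb{C}[x]$-torsion-free (torsion would be supported at finitely many $\lambda$, where the fibre dimension would exceed $\sum_{k=0}^n p(k)^2$, contrary to the above), and has $A_n\otimes_{\mathbb{C}[x]}\mathbb{C}(x)$ semisimple with all simple factors split (generic semisimplicity plus the split specializations), a standard argument — lift the central primitive idempotents of $A_n\otimes_{\mathbb{C}[x]}\mathbb{C}(x)$, which assemble into a finite cover of $\mathrm{Spec}\,\mathbb{C}[x]$ of constant degree $n+1$ and hence a trivial one, back into $A_n$ — yields $A_n\cong\bigoplus_{k=0}^n M_{p(k)}(\mathbb{C}[x])=A_{n-1}\oplus\bigl(\mathbb{C}[x]\otimes M_{p(n)}(\mathbb{C})\bigr)$, which is the asserted structure; the splitting of the surjection $A_n\to A_{n-1}$ is then the projection away from the last block.

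\textbf{The main obstacle.} The bottleneck is the first task carried out \emph{uniformly in $n$}. One needs a single family of ``correct'' generators $e_\mu$, indexed by the partitions of $1,\dots,n$ (the right analogue of the ad hoc change of variables \eqref{change-var1-n}--\eqref{change-var3-n} used for $n=2$), together with uniform control of the error terms $g_{i_1,\dots,i_t}(v)$ in Lemma \ref{multiplication-lemma} and of the lower-order terms $f_r(v_{-k}\mathbf{1})$ appearing in \eqref{reduce-equation}, strong enough to prove $\dim_{\mathbb{C}}\bigl(A_n/(x-\lambda)A_n\bigr)\le\sum_{k=0}^n p(k)^2$ and, in particular, that the candidate relations among the $e_\mu$ close up without forcing extra relations that would drop the fibre dimension below $\sum_{k=0}^n p(k)^2$. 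Even the $n=2$ case required a substantial computation of exactly this kind, and no general version of it is currently available; by contrast, the representation-theoretic lower bound, the endgame over the principal ideal domain $\mathbb{C}[x]$, and the reductions imported from Sections \ref{general-n-section}--\ref{technique-section} are expected to be routine.
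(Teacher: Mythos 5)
The statement you are proving is labelled a \emph{conjecture} in the paper: the authors prove it only for $n=1$ (in \cite{BVY-Heisenberg}) and $n=2$ (in \cite{AB-details-of-Heisenberg}) and offer no argument for general $n$, so there is no proof in the paper to compare yours against. What you have written is a proof \emph{strategy}, and you have correctly and honestly located the gap yourself: your ``first task'' --- producing, uniformly in $n$, a generating set $\{e_\mu\}$ indexed by partitions of $1,\dots,n$ together with enough relations to force $\dim_{\mathbb{C}}\bigl(A_n/(x-\lambda)A_n\bigr)\le\sum_{k=0}^n p(k)^2$ --- is not carried out, and it is precisely the content of the conjecture. The tools the paper supplies (Theorem \ref{last-generator-theorem-heisenberg}, the recursion \eqref{recursion-n-equation}, the reduction \eqref{reduce-equation}, Lemma \ref{multiplication-lemma}) give a generating set but say nothing quantitative about the span of monomials in those generators; the error terms $g_{i_1,\dots,i_t}(v)$ and the lower-order terms $f_r(v_{-k}\mathbf{1})$ are only controlled by filtration degree, not explicitly, and even for $n=2$ closing the relations required the ad hoc change of variables \eqref{change-var1-n}--\eqref{change-var3-n} and the module-by-module verification of Lemma \ref{lower-order-lemY} and Theorem \ref{A_2-theorem}. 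So the proposal cannot be accepted as a proof.

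That said, the two halves you do sketch are sound and consistent with how the known cases were settled. The representation-theoretic lower bound is correct in outline: for each $\lambda$ the modules $\Omega_k(M_a(1,\lambda))/\Omega_{k-1}(M_a(1,\lambda))\cong M_a(1,\lambda)(k)$, $0\le k\le n$, are irreducible $A_n$-modules of dimensions $p(k)$ on which $x$ acts by $\lambda$, and they are pairwise non-isomorphic (the two one-dimensional ones, $k=0,1$, are separated by the action of $y$, as the factorization $(x^2-y)(x^2-y+2)$ in Theorem \ref{A_1-theorem} shows), giving $\dim_{\mathbb{C}}\bigl(A_n/(x-\lambda)A_n\bigr)\ge\sum_{k=0}^n p(k)^2$. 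The endgame over $\mathbb{C}[x]$ (constant split semisimple fibres, trivial \'etale covers and trivial Brauer group of $\mathbb{A}^1$) is standard, though note that even the hypothesis that $A_n$ is module-finite over $\mathbb{C}[x]$ is itself an output of the missing first task rather than something currently established. In short: a reasonable roadmap, but the conjecture remains open exactly where you say it does.
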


\subsection{The Virasoro vertex operator algebra}
The Virasoro vertex operator algebra is another example of a vertex operator algebra for which the results of this paper can be readily and easily applied to  aid in determining the structure of higher level Zhu algebras.

Let $\mathcal{L}$ be the Virasoro algebra with central charge $\mathbf{c}$, that is, $\mathcal{L}$ is the vector space with basis $\{\bar{L}_n \,|\, n\in\Z\}\cup \{\mathbf{c}\}$ with bracket relations 
\begin{align*}
[\bar{L}_m,\bar{L}_n]=(m-n)\bar{L}_{m+n}+\frac{m^3-m}{12} \delta_{m+n,0} \, \textbf{c},\quad\quad  [\textbf{c},\bar{L}_m]=0  
\end{align*}  
for $m,n\in \Z$.  Here we use a bar over the Virasoro generators to distinguish between these Virasoro elements and the functor $L_n$ defined earlier.

Let $\mathcal{L}^{\geq 0}$ be the Lie subalgebra with basis $\{ \bar{L}_n  \,|\, n\geq 0 \} \cup \{\mathbf{c}\}$, and let $\mathbb{C}_{c,h}$ be the $1$-dimensional $\mathcal{L}^{\geq 0}$-module where $\mathbf{c}$ acts as $c$ for some $c\in \mathbb{C}$, $\bar{L}_0$ acts as $h$ for some $h\in \mathbb{C}$, and $\bar{L}_n$ acts trivially for $n\geq 1$. Form the induced $\mathcal{L}$-module
\begin{equation}\label{V-module-M}
M(c,h)= \mathcal{U}(\mathcal{L})\otimes_{\mathcal{L}^{\geq 0}} \mathbb{C}_{c,h} .
\end{equation}
We shall write $L(n)$ for the operator on a Virasoro module corresponding to $\bar{L}_n$, and $\mathbf{1}_{c,h} = 1 \in \mathbb{C}_{c,h}$.  Then 
\[ V_{Vir}(c,0)= M(c,0)/\langle L(-1)\mathbf{1}_{c,0}\rangle
\] 
has a natural vertex operator algebra structure with vacuum vector $1=\mathbf{1}_{c,0}$, and conformal element $\omega=L(-2)\mathbf{1}_{c,0}$, satisfying  $Y(\omega,x)= \sum_{n\in\Z } L(n)x^{-n-2}$.  In addition, for each $h\in\C$, we have that $M(c,h)$ is an ordinary $V_{Vir}(c,0)$-module with $\mathbb{N}$-gradation 
\[M(c,h)=\coprod_{k\in \N} M(c,h)_k\] 
where $M(c,h)_k$ is the $L(0)$-eigenspace with eigenvalue $h + k$.  We say that  $M(c,h)_k$ has degree $k$ and weight $h+k$.   

We now fix $c \in \mathbb{C}$, and denote by $V$, the vertex operator algebra $V_{Vir}(c,0)$.  

It was shown in \cite{W} that
\begin{equation}
A_0(V) \cong \mathbb{C}[x,y]/(y-x^2 - 2x) \cong \mathbb{C}[x]
\end{equation}
under the identification 
\begin{eqnarray}
L(-2)\mathbf{1} + O_0(V) &\longleftrightarrow& x + (q_0(x,y)), \\ 
L(-2)^2\mathbf{1} + O_0(V) &\longleftrightarrow& y + (q_0(x,y)),
\end{eqnarray}  
where $q_0(x,y) = y - x^2 - 2x$.

It was shown in \cite{BVY-Virasoro} that 
\begin{equation}
A_1(V) \cong \mathbb{C}[x,y]/(y-x^2 - 2x)(y - x^2 -6x + 4)\cong \mathbb{C}[\tilde{x},\tilde{y}]/(\tilde{x}\tilde{y})
\end{equation}
under the identification 
\begin{eqnarray}
L(-2)\mathbf{1} + O_1(V) &\longleftrightarrow& x + (q_0(x,y)q_1(x,y) ), \\ 
L(-2)^2\mathbf{1} + O_1(V) &\longleftrightarrow& y + (q_0(x,y)q_1(x,y)),
\end{eqnarray}  
where  $\tilde{x} = q_0(x,y) = y - x^2 - 2x$, and $\tilde{y} = q_1(x,y) = y - x^2 - 6x + 4$.

Fix $V = V_{Vir}(c,0)$, and recall (e.g., \cite{LL}, \cite{KarL}) that $V$ is spanned by vectors of the form
\begin{equation}\label{span}
L(-k_1) \cdots L(-k_m) \mathbf{1} \qquad \mbox{for $k_1 \geq \cdots \geq k_m \geq 2$ and $m \in \mathbb{N}$} .
\end{equation}

For $v \in V$ and $r \in \N$, we define $F_r(v)$ to be the subspace of $V$ linearly spanned by all elements of the form
\[
L(-k_1)\cdots L(-k_m)v,
\]
where $k_1, \dots, k_m \geq 2$ and $m \leq r$. Thus $F_r(v) \subseteq F_{r+1}(v)$.  Then from \cite{BVY-Virasoro}, we have the following:

\begin{lem}\label{permute}
For $m \in \mathbb{Z}_+$, let $\sigma$ be a permutation on $m$ letters, and let $v \in V$. Then for $k_1, \dots, k_m \geq 2$,
\[
L(-k_1)\cdots L(-k_m)v - L(-k_{\sigma(1)})\cdots L(-k_{\sigma(m)})v \in F_{m-1}(v).
\]
In particular, 
\[  L(-k_1) \cdots L(-k_m)  v \in F_{m-1}(v) \]
if $k_j \leq 1$ for any $j = 1, \dots, m$.
\end{lem}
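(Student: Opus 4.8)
My plan is to run everything off the Virasoro bracket $[L(-a),L(-b)]=(b-a)L(-(a+b))+\tfrac{(-a)^3-(-a)}{12}\delta_{a+b,0}\,\mathbf c$ together with the spanning set (\ref{span}), reducing each reordering to a chain of swaps of two adjacent operators and collecting the resulting shorter words. For the permutation statement I would first write an arbitrary $\sigma\in S_m$ as a product of adjacent transpositions $\tau_i=(i,i+1)$ and reduce, by a telescoping induction on the length of this product, to the case $\sigma=\tau_i$. There the difference $L(-k_1)\cdots L(-k_i)L(-k_{i+1})\cdots L(-k_m)v-L(-k_1)\cdots L(-k_{i+1})L(-k_i)\cdots L(-k_m)v$ equals $L(-k_1)\cdots L(-k_{i-1})[L(-k_i),L(-k_{i+1})]L(-k_{i+2})\cdots L(-k_m)v$; since $k_i,k_{i+1}\ge 2$ forces $k_i+k_{i+1}\ge 4\ne 0$, the central term drops and $[L(-k_i),L(-k_{i+1})]=(k_{i+1}-k_i)L(-(k_i+k_{i+1}))$, so the difference is a scalar times a single word with $m-1$ operators, all of index $\le -2$ — an element of $F_{m-1}(v)$. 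For the telescoping I would observe that at each intermediate stage the word is $L(-k_{\pi(1)})\cdots L(-k_{\pi(m)})v$ for some permutation $\pi$ of the (still all $\ge 2$) indices, so each successive swap contributes an element of $F_{m-1}(v)$, and since $F_{m-1}(v)$ is a subspace the total difference lands there.

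For the ``in particular'' clause I would argue by induction on $m$, using the relations $L(p)\mathbf 1=0$ for $p\ge 0$ and $L(-1)\mathbf 1=0$ that define $V_{Vir}(c,0)$. Given a word with some $k_j\le 1$, i.e. $L(-k_j)$ of index $-k_j\ge -1$, the plan is to shuttle $L(-k_j)$ to the right: crossing each $L(-k_l)$ to its right yields the commutator term $(k_l-k_j)L(-(k_j+k_l))$ — and, only when $k_l=-k_j\le -2$, an extra scalar central term — each of which is a word of at most $m-1$ operators applied to $v$, hence either already in $F_{m-1}(v)$ or, if it still contains a mode of index $\ge -1$, in $F_{m-2}(v)\subseteq F_{m-1}(v)$ by the inductive hypothesis. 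The surviving straightened term $L(-k_1)\cdots\widehat{L(-k_j)}\cdots L(-k_m)\,L(-k_j)v$ is then treated by expanding $v$ via (\ref{span}) and pushing $L(-k_j)$ down to the vacuum, where it is annihilated, producing only words that are shorter in the $L$'s and so controllable by the permutation part together with the outer induction.

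I expect the only genuine difficulty to lie in the combinatorial bookkeeping for the second statement: organizing the reduction as a clean nested induction — outer on the number $m$ of operators, inner on how far the offending low mode must be transported (or on the number of such low modes) — and checking that none of the commutator or central terms generated along the way, nor the term in which $L(-k_j)$ finally meets $v$, escapes $F_{m-1}(v)$. This is exactly the point where the special structure of $V_{Vir}(c,0)$, namely $L(-1)\mathbf 1=0$ together with the usual vacuum annihilation $L(p)\mathbf 1 = 0$ for $p\ge 0$, is indispensable; the permutation statement itself is essentially immediate from the Virasoro relations once the telescoping is set up.
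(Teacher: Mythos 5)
Your treatment of the permutation statement is correct and is the standard argument: the paper itself offers no proof of this lemma (it is quoted from \cite{BVY-Virasoro}), but reducing to adjacent transpositions, observing that for $k_i,k_{i+1}\ge 2$ the central term of $[L(-k_i),L(-k_{i+1})]$ vanishes and the commutator contributes a single word of length $m-1$ all of whose modes have index $\le -2$, and then telescoping over a decomposition of $\sigma$ into adjacent transpositions, is exactly what is needed.

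The ``in particular'' clause is where your proposal has a genuine gap, and it sits precisely where you flagged the difficulty. After transporting $L(-k_j)$ (a mode of index $\ge -1$) to the right, you are left with $L(-k_1)\cdots\widehat{L(-k_j)}\cdots L(-k_m)\,L(-k_j)v$, and you propose to expand $v$ via (\ref{span}) and push $L(-k_j)$ down to the vacuum. But this replaces $v$ by a linear combination of \emph{different} vectors (words of length up to that of $v$ applied to $\mathbf{1}$), so the resulting terms live in some $F_{m-1+q}(\mathbf{1})$ rather than in $F_{m-1}(v)$; neither the permutation statement nor your outer induction on $m$ converts a word applied to $\mathbf{1}$ back into a short word applied to the original $v$. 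In fact the clause is false for general $v\in V$ as stated: take $m=1$, $k_1=1$, $v=\omega$; then $L(-1)\omega=L(-3)\mathbf{1}$, which is a nonzero weight-$3$ vector of $V_{Vir}(c,0)$ and hence does not lie in $F_0(\omega)=\mathbb{C}\omega$. The statement, and your argument, are correct exactly when $L(p)v=0$ for all $p\ge -1$ --- in particular for $v=\mathbf{1}$, which is the form in which the lemma is proved and used in \cite{BVY-Virasoro}: there $L(-k_j)$ annihilates $\mathbf{1}$ outright, every commutator term is a word of length $m-1$ applied to $\mathbf{1}$ (handled by induction on $m$ if it still contains a low mode), every central term is a word of length $m-2$, and no expansion of $v$ is ever required. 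To repair the write-up you should restrict the clause to such $v$ (equivalently, require that $\langle\omega\rangle^1$ act strongly on $\mathbb{C}v$ in the sense of Section \ref{generators-section}) rather than try to push the low mode into an arbitrary $v$.
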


In particular, $V_{Vir}(c,0)$ satisfies the permutation property and is strongly generated by $\omega$. Therefore, the results listed in Section \ref{A_n-construction-section}, which we noted were important in determining the structure of $A_2(M(1))$ can also be applied to $A_n(V_{Vir}(c,0))$ for $n>1$.  For instance, Theorem \ref{last-generator-theorem} applied to the case $V_{Vir}(c,0)$ gives the following result:

\begin{thm}\label{last-generator-theorem-virasoro}
Let $V=V_{Vir}(c,0)$ and fix $n \in \mathbb{Z}_+$.  Then $A_n(V)$ is generated by elements of the form 
\[ L({-2n-1})^{i_{2n+1}} L({-2 n})^{i_{2n}} \cdots L({ - 2})^{i_2}\mathbf{1}+ O_n(V) ,  \mbox{and} \ L({-2})^{j} \mathbf{1} + O_n(V),\]
for $i_2, i_3, \dots, i_{2n+1} \in \mathbb{N}$, with $0 \leq 2i_2  \leq n$, and $j \in \mathbb{Z}_+$.  
\end{thm}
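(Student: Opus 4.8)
The plan is to deduce this statement directly from Theorem \ref{last-generator-theorem} applied to $V = V_{Vir}(c,0)$ with distinguished element $u = \omega = L(-2)\mathbf{1}$; the only real content is checking the hypotheses and rewriting the conclusion in terms of the Virasoro operators $L(m)$ rather than the vertex-algebra modes $\omega_m$.

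First I would verify the hypotheses of Theorem \ref{last-generator-theorem}. The conformal vector satisfies $\mathrm{wt}\, \omega = 2 \geq -n$ for all $n \in \mathbb{Z}_+$. Since $V_{Vir}(c,0)$ is strongly generated by $\omega$ --- as recalled from the spanning set \eqref{span} --- we have $V^\omega = \langle \omega \rangle^1 = V$, and Lemma \ref{permute} says exactly that $V$, hence $V^\omega$, satisfies the permutation property. Finally, because $V^\omega = V$ and the quotient map $V \twoheadrightarrow V/O_n(V) = A_n(V)$ is surjective, the subalgebra $A_n^\omega$ of $A_n(V)$ generated by the images of $V^\omega$ is all of $A_n(V)$. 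Thus Theorem \ref{last-generator-theorem} applies and gives that $A_n(V)$ is generated by the classes of $\omega_{-2n}^{i_{2n}} \omega_{-2n+1}^{i_{2n-1}} \cdots \omega_{-1}^{i_1}\mathbf{1}$ for $i_1, \dots, i_{2n} \in \mathbb{N}$ with $-n \leq i_1 \mathrm{wt}\, \omega \leq n$, together with the classes of $\omega_{-1}^{j}\mathbf{1}$ for $j \in \mathbb{Z}_+$.

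Next I would translate the modes. From $Y(\omega, x) = \sum_{m \in \mathbb{Z}} L(m) x^{-m-2}$ one reads off $\omega_m = L(m-1)$, so $\omega_{-k} = L(-k-1)$ for $k \geq 1$; in particular $\omega_{-1} = L(-2)$, $\omega_{-2} = L(-3), \dots, \omega_{-2n} = L(-2n-1)$, and $\omega_{-1}^j\mathbf{1} = L(-2)^j\mathbf{1}$. Substituting these into the two families above and relabelling the exponents so that the exponent of $L(-k-1) = \omega_{-k}$ is written $i_{k+1}$ (for $k = 1, \dots, 2n$) converts the first family into $L(-2n-1)^{i_{2n+1}} L(-2n)^{i_{2n}} \cdots L(-2)^{i_2}\mathbf{1}$ with $i_2, \dots, i_{2n+1} \in \mathbb{N}$; the constraint $-n \leq i_1 \mathrm{wt}\, \omega \leq n$ becomes $-n \leq 2 i_2 \leq n$, which since $i_2 \geq 0$ is $0 \leq 2 i_2 \leq n$; and the second family becomes $L(-2)^j\mathbf{1}$ with $j \in \mathbb{Z}_+$. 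This is precisely the asserted generating set.

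Since the substance of the argument is already contained in Theorem \ref{last-generator-theorem} and the results it rests on (the recursion of Proposition \ref{recursion-n}, Theorem \ref{reducing-general-generators-thm}, and Proposition \ref{reduce-prop}), I do not expect any essential obstacle. The one point that will require care is the bookkeeping of the mode shift $\omega_{-k} = L(-k-1)$: one must make sure that a maximal mode index $-2n$ on $\omega$ corresponds to $L(-2n-1)$, and that the exponent constraint attached to $\omega_{-1}$ lands on the power of $L(-2)$, yielding $0 \leq 2 i_2 \leq n$ after the relabelling.
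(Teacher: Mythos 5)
Your proposal is correct and is precisely the paper's argument: the paper derives this theorem by invoking Theorem \ref{last-generator-theorem} with $u=\omega$, using that $V_{Vir}(c,0)$ is strongly generated by $\omega$ and satisfies the permutation property (Lemma \ref{permute}), and then translating modes via $\omega_{-k}=L(-k-1)$. Your bookkeeping of the mode shift and the resulting constraint $0\leq 2i_2\leq n$ matches the paper exactly.
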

As in the case of $M_a(1)$, one can further reduce generators and calculate relations satisfied by these generators by using the multiplication formulas found in Section \ref{mult-section} as well as by applying Lemma \ref{Y+-lemma} to calculate formulas equivalent modulo $O_n^\circ(V)$ to various circle products, $u\circ_nv$. Finally, we remark that Theorem \ref{clarify-definition-thm} shows the importance of including the vectors of the form $(L(-1)+L(0))v$, $v\in V_{Vir}(c,0)$ in $O_n(V)$ for $n>1$.

\appendix
\section{A proof of the combinatorial identity Eqn.\  \eqref{combinatorial-formula} in Proposition \ref{recursion-n}}
\label{appendix}

To prove Eqn.\  \eqref{combinatorial-formula} in Proposition \ref{recursion-n}, we first prove the following lemma which involves a special case of a combinatorial identity that can be found in \cite{K}, Section 1.2.6,  formula I (24).  In \cite{K} there is an outline of the proof of this identity given in the exercises, however for completeness, we provide a proof here.

\begin{lem}\label{appendix-lemma}
For $r, m \in \mathbb{N}$ with $r-m \geq 0$ and $s\in \mathbb{Z}$,
\begin{equation}\label{combo-lemma-id1}
\sum_{l\in \mathbb{N}}(-1)^{l}\binom{s}{l}\binom{r-l}{m}=\binom{r-s}{r-m}.
\end{equation}
In particular this implies that for $n, k, j \in \mathbb{Z}$ such that $n \in \mathbb{N}$, $k >-n$, and $1 \leq j \leq n + k$,  setting $m=n+k-j$, $r=n+k-1$,  and $s=n+k$, we have
\begin{equation}\label{combo-lemma-id2}
\sum_{l\in \mathbb{N} }(-1)^{l}\binom{n+ k}{l}\binom{n + k - 1 -l}{n + k - j}=\binom{-1}{j-1} = (-1)^{j-1} .
\end{equation}
\end{lem}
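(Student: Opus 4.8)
The plan is to prove the convolution identity \eqref{combo-lemma-id1} by coefficient extraction (the generating-function method), and then to obtain \eqref{combo-lemma-id2} by substituting the indicated values of $m,r,s$.

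First I would realize the binomial $\binom{r-l}{m}$ as $[t^m]\,(1+t)^{r-l}$ (the coefficient of $t^m$), pull the coefficient extraction outside the sum over $l$, and factor out $(1+t)^{r-s}$:
\begin{equation*}
\sum_{l\in\mathbb{N}}(-1)^l\binom{s}{l}\binom{r-l}{m}
= [t^m]\,(1+t)^{r-s}\sum_{l\in\mathbb{N}}\binom{s}{l}(-1)^l(1+t)^{s-l}
= [t^m]\,(1+t)^{r-s}\bigl((1+t)-1\bigr)^{s}
= [t^m]\,(1+t)^{r-s}\,t^{s}.
\end{equation*}
The inner sum collapses by the binomial theorem; since $(1+t)^{r-s}t^{s}$ contributes to $[t^m]$ only through $[t^{m-s}](1+t)^{r-s}$, the right-hand side equals $\binom{r-s}{m-s}=\binom{r-s}{r-m}$. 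An essentially equivalent and convention-robust alternative is induction on $s$: the base case $s=0$ is the symmetry $\binom{r}{m}=\binom{r}{r-m}$, and Pascal's rule $\binom{s}{l}=\binom{s-1}{l}+\binom{s-1}{l-1}$ with an index shift in the second piece gives, for $C(s,r,m)$ denoting the left-hand side, the recursion $C(s,r,m)=C(s-1,r,m)-C(s-1,r-1,m)$; the claimed closed form then drops out of the inductive hypothesis together with the single instance $\binom{r-s+1}{r-m}-\binom{r-s}{r-m-1}=\binom{r-s}{r-m}$ of Pascal's rule, with the boundary case $r=m$ (where one cannot pass to $r-1$) checked directly.

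For \eqref{combo-lemma-id2} I would simply put $m=n+k-j$, $r=n+k-1$, $s=n+k$ into \eqref{combo-lemma-id1}: the hypotheses $k>-n$ and $1\le j\le n+k$ force $r,m\in\mathbb{N}$ and $r-m=j-1\ge0$, so the identity applies and yields $\binom{r-s}{r-m}=\binom{-1}{j-1}$, and finally $\binom{-1}{j-1}=\frac{(-1)(-2)\cdots(-(j-1))}{(j-1)!}=(-1)^{j-1}$. It is also worth noting that this specialization can be proved directly in one line by the same generating-function manipulation: writing $\binom{n+k-1-l}{n+k-j}=[t^{\,j-1-l}](1+t)^{n+k-1-l}$ and summing against $(-1)^l\binom{n+k}{l}$ produces $[t^{\,j-1}]\,(1+t)^{n+k-1}(1+t)^{-(n+k)}=[t^{\,j-1}]\,(1+t)^{-1}=(-1)^{j-1}$.

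The step requiring the most care is not the algebra but the bookkeeping at the ends of the summation range together with the convention for binomial coefficients having a non-positive top entry: precisely which terms $\binom{r-l}{m}$ are declared to vanish (and likewise how the final value $\binom{r-s}{r-m}$ is read when $r-s<0$) is what pins down the sign $(-1)^{j-1}$ in \eqref{combo-lemma-id2}, so whichever proof one writes one must fix the convention in force and keep to it consistently. Once that is done, both proofs are short.
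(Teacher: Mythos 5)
Your generating-function route is genuinely different from the paper's (which rewrites $\binom{r-l}{m}$ by symmetry and upper negation, applies Vandermonde, and negates again), and in principle it is cleaner; but the main derivation has a gap at exactly the point where the lemma is used. The coefficient extraction, carried out with the generalized convention $\binom{p}{q}=[t^q](1+t)^p$ for all integers $p$, correctly yields $[t^{m-s}](1+t)^{r-s}=\binom{r-s}{m-s}$; the final step $\binom{r-s}{m-s}=\binom{r-s}{r-m}$ invokes the symmetry $\binom{p}{a}=\binom{p}{p-a}$, which fails for $p<0$. In the specialization \eqref{combo-lemma-id2} one has $r-s=-1$ and $m-s=-j<0\le j-1=r-m$, so your formula returns $\binom{-1}{-j}=0$ while the lemma asserts $\binom{-1}{j-1}=(-1)^{j-1}$. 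This is not a pedantic point about conventions: with the fully generalized convention the sum in \eqref{combo-lemma-id2} genuinely equals $0$, because the term $l=n+k$ contributes $(-1)^{n+k}\binom{-1}{n+k-j}=(-1)^{j}$ and cancels the rest; the value $(-1)^{j-1}$ the paper needs corresponds to declaring $\binom{r-l}{m}=0$ whenever $r-l<0$, i.e.\ truncating the sum at $l\le r$. Under that truncation your binomial-theorem collapse $\sum_l\binom{s}{l}(-1)^l(1+t)^{s-l}=t^s$ is no longer available when $s>r$, so the argument cannot be repaired by ``fixing the convention'' alone --- it must change in the regime $s>r$, which is the only regime the paper uses ($s=n+k$, $r=n+k-1$).

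Your parenthetical one-line computation of \eqref{combo-lemma-id2} does give the right answer, but only because the rewriting $\binom{n+k-1-l}{n+k-j}=[t^{\,j-1-l}](1+t)^{n+k-1-l}$ silently replaces the $l=n+k$ term (whose generalized value is $(-1)^{n+k-j}\neq0$) by $0$; made explicit, that computation is a complete and correct proof of \eqref{combo-lemma-id2} under the truncating convention, and it is all that Proposition \ref{recursion-n-identity} actually requires. You flag the convention issue in your closing paragraph but do not resolve it, and your two computations assign different values ($0$ versus $(-1)^{j-1}$) to the same sum, so as written the proposal does not establish the lemma. For comparison, the paper's proof sidesteps the problem because its first step, $\binom{r-l}{m}=\binom{r-l}{r-l-m}$, already annihilates every term with $l>r-m$ before Vandermonde is applied; the price is the double use of upper negation, which your approach would avoid if the $s>r$ case were handled honestly.
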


\begin{proof} Note that by the definition of the binomial, we have
\begin{equation}\label{combo-id1}
\binom{p}{q} = (-1)^q \binom{q - p - 1}{q}.
\end{equation}

Also recall the Vandermonde identity (cf. \cite{K}), which can easily be derived by equating coefficients in the expansion of $(1 + x)^{i+ j} = (1+x)^i(1+x)^j$ for $i,j\in \mathbb{Z}$, is given by
\begin{equation}
\sum_{l \in \mathbb{N}} \binom{i}{l} \binom{j}{q-l} = \binom{i+j}{q}
\end{equation}
for $q \in \mathbb{N}$.  Setting $i = s$, $j = -m - 1$, for $s, m \in \mathbb{Z}$, and $q = r - m$ for $r\geq m$, we have
\begin{equation}\label{van2}
 \sum_{l\in \mathbb{N}} \binom{s}{l}  \binom{-m-1}{r-m -l}  =  \binom{s-m-1}{r-m}.
\end{equation}

Thus for $r, m \in \mathbb{N}$ with $r-m\geq 0$, and $s \in \mathbb{Z}$, using the fact that $\binom{p}{q} = \binom{p}{p-q}$ for $p = r-l$ and $q = r-l-m$, then using Eqn. (\ref{combo-id1}) in the case when $p = r - l$ and $q = r-l-m$, simplifying, then using Eqn. (\ref{van2}), and finally using Eqn. (\ref{combo-id1}) in the case when $p = r-s$ and $q = r-m$, we have 
\begin{eqnarray*}
\sum_{l\in \mathbb{N}} (-1)^{l}\binom{s}{l}\binom{r-l}{m} &=& \sum_{l \in \mathbb{N}}(-1)^{l}\binom{s}{l}\binom{r-l}{r-l-m}  \\
&=& \sum_{l \in \mathbb{N}} (-1)^{l}\binom{s}{l} (-1)^{r-l-m} \binom{-m-1}{r-l-m}\\
&=& (-1)^{r - m}  \sum_{l\in \mathbb{N}} \binom{s}{l}  \binom{-m-1}{r-l-m} \\
&=& (-1)^{r- m} \binom{s-m-1}{r-m} \\
&=& \binom{r-s}{r-m},
\end{eqnarray*}
proving (\ref{combo-lemma-id1}).  Then (\ref{combo-lemma-id2}) follows immediately using the substitution specified.
\end{proof}

\begin{prop}
\label{recursion-n-identity}
For $n \in \mathbb{N}$, $k \in \mathbb{Z}$ with $k > -n$, and $j = 1, \dots, n + k$ and $m \in \mathbb{N}$ with $m > 2n + k$, we have 
\begin{equation}\label{combinatorial-formula2}
 \sum_{i = 0}^{ n + k}  (-1)^{i+1} \binom{ n + k }{i} 
\binom{m-i-n-1}{j-1} \binom{m-i-n-j-1}{n+ k-j}  =0.
\end{equation}
\end{prop}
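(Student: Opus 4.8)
The plan is to recognize the left-hand side of \eqref{combinatorial-formula2} as an $(n+k)$-fold alternating binomial sum applied to a polynomial in the summation index whose degree is one less than the order of that sum, and hence zero. Set $N=n+k$, so $N\ge 1$. Interpreting the binomial coefficients by the generalized convention $\binom{x}{q}=x(x-1)\cdots(x-q+1)/q!$ for $q\in\mathbb N$ — the same convention already in force in Lemma~\ref{appendix-lemma}, where $\binom{-1}{j-1}=(-1)^{j-1}$ — put
\[
p(i)=\binom{m-n-1-i}{\,j-1\,}\binom{m-n-j-1-i}{\,N-j\,}.
\]
As a function of $i$ the first factor is a polynomial of degree $j-1$ and the second a polynomial of degree $N-j$, so $p$ is a polynomial of degree $(j-1)+(N-j)=N-1$. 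Thus \eqref{combinatorial-formula2} is precisely the assertion that $\sum_{i=0}^{N}(-1)^{i+1}\binom{N}{i}p(i)=0$, i.e. (up to the overall sign) that $\sum_{i=0}^{N}(-1)^{i}\binom{N}{i}p(i)=0$.

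Next I would record the elementary vanishing: for each integer $r$ with $0\le r<N$,
\[
\sum_{i=0}^{N}(-1)^{i}\binom{N}{i}\binom{i}{r}
=\binom{N}{r}\sum_{i\ge r}(-1)^{i}\binom{N-r}{\,i-r\,}
=(-1)^{r}\binom{N}{r}\sum_{s\ge 0}(-1)^{s}\binom{N-r}{s}
=(-1)^{r}\binom{N}{r}(1-1)^{N-r}=0,
\]
using the identity $\binom{N}{i}\binom{i}{r}=\binom{N}{r}\binom{N-r}{\,i-r\,}$, the substitution $s=i-r$, the binomial theorem, and the fact that $N-r\ge 1$. Since the polynomials $\binom{i}{0},\binom{i}{1},\dots,\binom{i}{N-1}$ form a basis for the space of polynomials in $i$ of degree at most $N-1$, I can write $p(i)=\sum_{r=0}^{N-1}c_{r}\binom{i}{r}$ for suitable constants $c_r$, and then by linearity $\sum_{i=0}^{N}(-1)^{i}\binom{N}{i}p(i)=\sum_{r=0}^{N-1}c_{r}\cdot 0=0$. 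Multiplying through by $-1$ gives \eqref{combinatorial-formula2}.

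The one point demanding care is bookkeeping with the binomial convention: the factor-by-factor polynomiality of $p$ forces us to read $\binom{m-n-j-1-i}{N-j}$ as the polynomial $\frac{1}{(N-j)!}\prod_{\ell=0}^{N-j-1}(m-n-j-1-i-\ell)$ even when its upper argument is negative — which does happen for the larger values of $i$ — and similarly for the other factor; this is exactly the convention of Lemma~\ref{appendix-lemma} (and with the other, ad hoc, convention that would set such a coefficient to zero the identity would simply be false). Note also that under the standing hypothesis $m>2n+k$ one has $m-n-1-i\ge m-2n-k-1\ge 0$ for all $0\le i\le N$, so the first factor $\binom{m-n-1-i}{j-1}$ is an ordinary binomial coefficient throughout and no ambiguity remains. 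I do not expect any serious obstacle here beyond correctly pinning down that $\deg_i p=N-1$ — one below the order $N$ of the difference — which is the whole content of the identity, and then carrying the convention consistently; an alternative route through Vandermonde-type manipulations as in Lemma~\ref{appendix-lemma} is possible but less transparent than the finite-difference argument above.
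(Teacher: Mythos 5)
Your proof is correct, and it takes a genuinely different route from the paper's. You read the binomial coefficients as polynomials in their upper arguments (the convention the paper itself presupposes: Lemma \ref{appendix-lemma} uses $\binom{-1}{j-1}=(-1)^{j-1}$, and the paper's proof expands the summand as a product of consecutive integers), so that the summand becomes $(-1)^{i+1}\binom{N}{i}p(i)$ with $N=n+k$ and $p$ a polynomial in $i$ of degree $N-1$; the identity is then the classical fact that the $N$-th finite difference of a polynomial of degree less than $N$ vanishes, which you check on the basis $\binom{i}{0},\dots,\binom{i}{N-1}$. The paper instead rewrites the summand as a product of $n+k$ consecutive integers divided by $(m-i-n-j)$, realizes the sum as $\bigl(\frac{d}{dx}\bigr)^{n+k}$ of $x^{j-1}\int x^{m-2n-k-j-1}(x-1)^{n+k}\,dx$ at $x=1$, must split off the exceptional term where $m-i-n-j=0$ (the $\epsilon(j,k,m,n)$ bookkeeping and the logarithm), and ultimately reduces to a second Vandermonde-type identity, Eqn.\ \eqref{last-equation}, proved via Lemma \ref{appendix-lemma}. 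Your argument avoids the case analysis and the auxiliary identity entirely, and it makes visible that the hypothesis $m>2n+k$ plays no role in the identity itself under the polynomial reading (it only ensures the first factor has a nonnegative upper argument). Your cautionary remark about the convention is also well taken: with the convention that sets $\binom{p}{q}=0$ for negative integer $p$, the identity fails (e.g.\ $n=2$, $k=0$, $j=1$, $m=5$ gives $-1$ rather than $0$), so the polynomial interpretation is forced, and it is exactly the one under which the identity is applied in the proof of Proposition \ref{recursion-n}.
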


\begin{proof}
Since $m -i - n - 1 \geq 0$, the second binomial coefficient in Eqn.\ (\ref{combinatorial-formula2}) will be zero and thus will not contribute to the sum if $j-1 > m -i - n - 1$, i.e. if $i > m - j - n$.  More importantly, for certain $j = 1, \dots, n + k$, $m > 2n + k$, $k >-n$, and $i = 0, \cdots, n +k$, it can happen that $m - i - n - j = 0$, i.e. for certain quadruples $(j, k, m, n)$, we can have $m-n-j \in \{ 0, \dots, n+k\}$.  For fixed $j, k, m$, and $n$, we define 
\[
\epsilon (j,k,m,n) = \left \{ \begin{array}{ll} 
0 & \mbox{if $m-n-j \notin \{ 0, \dots, n+k\}$}\\
1 & \mbox{if $m-n-j \in \{0, \dots, n+k\}$}
\end{array} \right.  .\]
Thus for fixed $n \in \mathbb{N}$, and $j = 1, \dots, n + k$, $k >-n$, and $m > 2n + k$, we have
\begin{eqnarray*}
\lefteqn{ \sum_{i = 0}^{ n + k}  (-1)^{i+1} \binom{ n + k }{i} 
\binom{m-i-n-1}{j-1} \binom{m-i-n-j-1}{n+k-j}}\\
&=& \sum_{i=0}^{n+k} (-1)^{i+1}\binom{n+k}{i}\frac{(m-i -n-1)(m-i-n-2)\cdots (m-i-n-j + 1) }{(j-1)! (n + k-j)!}\\
&& \quad \cdot (m-i-n-j-1) \cdots (m-i-2n-k)\\
&=&  \sum_{\stackrel{0\leq i \leq n + k}{i \neq m - j - n}}  (-1)^{i+1}\binom{n+k}{i}\frac{(m-i -n-1)(m-i-n-2)\cdots (m-i-2n-k)}{(j-1)! (n + k-j)! (m-i-n-j) } \\
& & \quad + \, \epsilon(j,k,m,n)  (-1)^{m + k + 1} \binom{n+k}{m -n -j}\\
&=& \frac{1}{(j-1)! (n+k - j)!} \bigg(\sum_{\stackrel{0\leq i \leq n + k}{i \neq m - j - n}}  (-1)^{i+1}\binom{n+k}{i}\Big(\frac{d}{dx}\Big)^{n+k}\frac{x^{m-i-n-1}}{m-i-n-j}\bigg)\bigg\rvert_{x=1}\\
& & \quad + \, \epsilon(j,k,m,n)  (-1)^{m + k + 1} \binom{n+k}{m -n -j}\\
&=& \frac{1}{(j-1)! (n+k - j)!} \bigg( \! \Big(\frac{d}{dx}\Big)^{n+k} \biggl(\sum_{\stackrel{0\leq i \leq n + k}{i \neq m - j - n}}  (-1)^{i+1}\binom{n+k}{i} \frac{x^{m-i-n-1}}{m-i-n-j}\bigg) \! \biggr) \bigg\rvert_{x=1}\\
& & \quad + \, \epsilon(j,k,m,n)  (-1)^{m + k + 1} \binom{n+k}{m -n -j}\\
&=& \frac{1}{(j-1)! (n+k - j)!} \bigg( \!  \Big(\frac{d}{dx}\Big)^{n+k} \biggl(x^{j-1} \sum_{\stackrel{0\leq i \leq n + k}{i \neq m - j - n}}  (-1)^{i+1}\binom{n+k}{i} \frac{x^{m-i-n-j}}{m-i-n-j}\bigg)  \! \biggr) \bigg\rvert_{x=1}\\
& & \quad + \, \epsilon(j,k,m,n)  (-1)^{m + k + 1} \binom{n+k}{m -n -j}\\
&=& \frac{1}{(j-1)! (n+k - j)!} \bigg(\! \Big(\frac{d}{dx}\Big)^{n+k} \biggl(x^{j-1}  \sum_{\stackrel{0\leq i \leq n + k}{i \neq m - j - n}}  (-1)^{i+1}\binom{n+k}{i} \int  x^{m-i-n-j-1} dx \bigg) \! \biggr) \bigg\rvert_{x=1}\\
& & \quad + \,  \epsilon(j,k,m,n)  (-1)^{m + k + 1} \binom{n+k}{m -n -j}\\
&=& \frac{1}{(j-1)! (n+k - j)!} \bigg( \! \Big(\frac{d}{dx}\Big)^{n+k} \biggl(x^{j-1}  \int x^{m-2n-k-j-1}  \\
& & \quad \cdot \bigg( \sum_{\stackrel{0\leq i \leq n + k}{i \neq m - j - n}}  (-1)^{i+1}\binom{n+k}{i} x^{n+k-i}  \bigg) dx \bigg)\!  \biggr) \bigg\rvert_{x=1} +  \epsilon(j,k,m,n)  (-1)^{m + k + 1} \binom{n+k}{m -n -j}\\
&=& \frac{-1}{(j-1)! (n+k - j)!} \bigg( \!  \Big(\frac{d}{dx}\Big)^{n+k} \biggl(x^{j-1}  \int x^{m-2n-k-j-1}  \\
& & \quad \cdot \bigg(\sum_{\stackrel{0\leq i \leq n + k}{i \neq m - j - n}}  (-1)^{i}\binom{n+k}{i} x^{n+k-i}  \bigg) dx \bigg) \! \biggr) \bigg\rvert_{x=1}  +   \epsilon(j,k,m,n)  (-1)^{m + k + 1} \binom{n+k}{m -n -j}\\
&=& \frac{-1}{(j-1)! (n+k - j)!} \bigg( \! \Big(\frac{d}{dx}\Big)^{n+k} \biggl(x^{j-1}  \int x^{m-2n-k-j-1} \bigg( \sum_{i=0}^{n+k}  (-1)^{i}\binom{n+k}{i} x^{n+k-i} \\
& & \quad - \,  \epsilon(j,k,m,n) (-1)^{m - j - n} \binom{n + k}{m-n-j} x^{n+ k -m + j + n} \bigg) dx \bigg) \! \biggr) \bigg\rvert_{x=1}\\
& & \quad  + \,  \epsilon(j,k,m,n)  (-1)^{m + k + 1} \binom{n+k}{m -n -j}\\
&=& \frac{-1}{(j-1)! (n+k - j)!} \bigg(\! \Big(\frac{d}{dx}\Big)^{n+k} \biggl(x^{j-1}  \int x^{m-2n-k-j-1} \bigg( (x-1)^{n+k} \\
& & \quad - \,  \epsilon(j,k,m,n) (-1)^{m - j - n} \binom{n + k}{m-n-j} x^{2n+ k -m + j} \bigg) dx \bigg)  \! \biggr) \bigg\rvert_{x=1}\\
& & \quad  + \,  \epsilon(j,k,m,n)  (-1)^{m + k + 1} \binom{n+k}{m -n -j}\\
&=& \frac{-1}{(j-1)! (n+k - j)!} \bigg(\! \Big(\frac{d}{dx}\Big)^{n+k} \biggl(x^{j-1}  \int \bigg( x^{m-2n-k-j-1}  (x-1)^{n+k} \\
& & \quad - \,  \epsilon(j,k,m,n) (-1)^{m - j - n} \binom{n + k}{m-n-j} x^{-1}  \bigg) dx \bigg) \! \biggr) \bigg\rvert_{x=1}\\
& & \quad  + \,  \epsilon(j,k,m,n)  (-1)^{m + k + 1} \binom{n+k}{m -n -j}\\
&=& \frac{-1}{(j-1)! (n+k - j)!} \bigg( \! \Big(\frac{d}{dx}\Big)^{n+k} \biggl(x^{j-1}  \int x^{m-2n-k-j-1}  (x-1)^{n+k} dx \bigg)\!  \bigg)  \bigg\rvert_{x=1} \\
& & \quad +  \,   \frac{1}{(j-1)! (n+k - j)!} \epsilon(j,k,m,n) (-1)^{m - j - n} \binom{n + k}{m-n-j} \bigg( \! \Big( \frac{d}{dx}\Big)^{n + k}  x^{j-1} \ln x  \bigg)  \bigg\rvert_{x=1}\\
& & \quad  + \,  \epsilon(j,k,m,n)  (-1)^{m + k + 1} \binom{n+k}{m -n -j}\\
&=& \frac{-1}{(j-1)! (n+k - j)!} \bigg(\! \Big(\frac{d}{dx}\Big)^{n+k} \biggl(x^{j-1}  \int x^{m-2n-k-j-1}  (x-1)^{n+k} dx \bigg) \! \bigg)  \bigg\rvert_{x=1} \\
& & \quad +  \,  (-1)^m \epsilon(j,k,m,n)  \binom{n + k}{m-n-j} \bigg(   \frac{1}{(j-1)! (n+k - j)!}(-1)^{ - j - n}\\
& & \quad \cdot \bigg( \! \Big( \frac{d}{dx}\Big)^{n + k}  x^{j-1} \ln x \bigg)  \! \bigg) \bigg\rvert_{x=1}  +  (-1)^{ k + 1} \bigg) .
\end{eqnarray*} 

The proof is thus reduced to showing that for $n + k >0$, $j = 1, \dots, n+k$, and $m\geq n+ 1$,
\begin{equation}\label{first-part}
 \bigg( \! \Big(\frac{d}{dx}\Big)^{n+k}\bigg(x^{j-1}\int x^{m-2n-k-j-1}(-x+1)^{n+k}dx \bigg) \! \bigg)  \bigg\rvert_{x=1}=0,
\end{equation}
and 
\begin{equation}\label{second-part}
 \frac{1}{(j-1)! (n+k - j)!}(-1)^{ - j - n}\bigg( \! \Big( \frac{d}{dx}\Big)^{n + k}  x^{j-1} \ln x \bigg) \! \bigg) \bigg\rvert_{x=1} + (-1)^{k+1} = 0.
\end{equation}

Since the $(n+k)$th derivative of a product of functions, $f$ and $g$ is given by
\begin{equation}\label{derivprod}
\Big(\frac{d}{dx}\Big)^{n+k}(fg)=\sum_{i=0}^{n+k}\binom{n+k}{i}\biggl(\! \Big(\frac{d}{dx}\Big)^{n+k-i}f \biggr) \biggl(\! \Big(\frac{d}{dx}\Big)^ig \biggr),
\end{equation}
applying this in the case when  $f=x^{j-1}$ and $g=\int x^{m-2n-k-j-1}(-x+1)^{n+k}dx,$ we have
\begin{eqnarray*}
\lefteqn{\Big(\frac{d}{dx}\Big)^{n+k}\bigg(x^{j-1}\int x^{m-2n-k-j-1}(-x+1)^{n+k}dx \bigg)}\\
&=& \sum_{i=0}^{n+k}\binom{n+k}{i}\biggl(\! \Big(\frac{d}{dx}\Big)^{n+k-i}x^{j-1}\biggr) \biggl( \! \Big(\frac{d}{dx}\Big)^i \int x^{m-2n-k-j-1}(-x+1)^{n+k}dx\biggr)\\
&=& (j-1) (j-2) \cdots (j-n - k) x^{j - n - k-1} \int x^{m-2n-k-j-1}(-x+1)^{n+k}dx \\
& & \quad + \, \sum_{i=1}^{n+ k}\binom{n+k}{i}(j-1)(j-2)\cdots(j-n-k+i)x^{j-n - k +i-1}\\
& & \quad \cdot  \Big(\frac{d}{dx}\Big)^{i-1} (x^{m-2n-k-j-1}(-x+1)^{n+k} ).
\end{eqnarray*}
The first term in the equation above is zero since $j = 1, \dots, n+k$, and each of the terms in the summation are zero at $x = 1$ since they each contain a factor of $(1 - x)$.  Thus (\ref{first-part}) holds.    Applying (\ref{derivprod}) to the case when $f = x^{j-1}$ and $g = \ln x$, we have that
\begin{eqnarray*}
\lefteqn{\frac{1}{(j-1)! (n+k - j)!}(-1)^{ - j - n}\bigg( \! \Big( \frac{d}{dx}\Big)^{n + k}  x^{j-1} \ln x \bigg)  \! \bigg) \bigg\rvert_{x=1} \! \! \! + (-1)^{k+1}}\\
&=&\frac{1}{(j-1)! (n+k - j)!}(-1)^{ - j - n}  \sum_{i=0}^{n+k}\binom{n+k}{i}\biggl(\! \Big(\frac{d}{dx}\Big)^{n+k-i}x^{j-1}\biggr) \biggl( \! \Big(\frac{d}{dx}\Big)^i \ln x \biggr)  \bigg\rvert_{x=1} \!  \! \! \! + (-1)^{k+1}\\
&=&   \frac{1}{(j-1)! (n+k - j)!}(-1)^{ - j - n}  \! \!  \sum_{i=n+k - j + 1}^{n+k}\binom{n+k}{i}\biggl(\! \Big(\frac{d}{dx}\Big)^{n+k-i}x^{j-1}\biggr) \biggl( \! \Big(\frac{d}{dx}\Big)^i \ln x \biggr)\bigg\rvert_{x=1} \\
& & \quad  + \, (-1)^{k+1} \\
&=&\frac{1}{(j-1)! (n+k - j)!}(-1)^{ - j - n} \bigg( \sum_{i=n+k -j +1}^{n+ k}\binom{n+k}{i}(j-1)(j-2)\cdots(j-n-k+i)\\
& & \quad \cdot x^{j-n - k +i-1} (-1) (-2) \cdots (-i + 1) x^{-i} \bigg) \bigg\rvert_{x=1} \! \! \! + (-1)^{k+1}\\
&=&\frac{1}{(j-1)! (n+k - j)!}(-1)^{ - j - n} \bigg( \sum_{i=n+k -j +1}^{n+ k}\binom{n+k}{i}(j-1)(j-2)\cdots(j-n-k+i)
\\
& & \quad (-1) (-2) \cdots (-i + 1)  \bigg)  + (-1)^{k+1} \\
&=&\frac{1}{(j-1)! (n+k - j)!}(-1)^{ - j - n} \bigg( \sum_{i=n+k -j +1}^{n+ k}\binom{n+k}{n+k - i} \frac{(j-1)!}{(j - n - k + i -1)!}\\
& & \quad (-1)^{i-1}  (i-1)!  \bigg)  + (-1)^{k+1} \\
&=&\frac{1}{(j-1)! (n+k - j)!}(-1)^{ - j - n} \bigg( \sum_{r=0}^{j-1}\binom{n+k}{j-1-r} \frac{(j-1)!}{r!}  (-1)^{n+k - j + r}  (n+ k - j + r)!  \bigg)  \\
& & \quad + \, (-1)^{k+1} \\
&=& \frac{1}{(n+k - j)!}(-1)^{ k} \bigg( \sum_{r=0}^{j-1} (-1)^{ r}  \binom{n+k}{j-1-r} \frac{(n+ k - j + r)! }{r!}  \bigg)  + (-1)^{k+1}  \\
&=&(-1)^k  \sum_{r=0}^{j-1} (-1)^{ r}  \binom{n+k}{j-1-r} \binom{n+ k - j + r}{r}  + (-1)^{k+1}  .
\end{eqnarray*}

We are done if we can show that 
\begin{equation}\label{last-equation}
\sum_{r=0}^{j-1}(-1)^r\binom{n+k}{j-1-r}\binom{n+k-j+r}{r}=1.
\end{equation}
Noting that $\binom{p}{q} = 0$ if $q<0$, then substituting $l = j - 1 - r$, then using the fact that $\binom{p}{q} = \binom{p}{p-q}$, then using the fact that $\binom{p}{q} = 0$ if $p\geq 0$ and $q>p$ to extend the range of $l$,  and then using Lemma \ref{appendix-lemma}, Eqn. (\ref{combo-lemma-id2}), we have that 
\begin{eqnarray*}
\lefteqn{\sum_{r=0}^{j-1}(-1)^r\binom{n+k}{j-1-r}\binom{n+k-j+r}{r}}\\
&=& \sum_{r=-1}^{j-1}(-1)^r\binom{n+k}{j-1-r}\binom{n+k-j+r}{r} \\
&=&\sum_{l=0}^{ j }(-1)^{j-1+l}\binom{n+k}{l}\binom{n+k-l-1}{j-l-1}\\
&=&\sum_{l=0}^{  j}  (-1)^{j-1+l}\binom{n+k}{l}\binom{n+k-l-1}{n+k-j}\\
&=&\sum_{l \in \mathbb{N} }(-1)^{j-1+l}\binom{n+k}{l}\binom{n+k-l-1}{n+k-j}\\
&=&(-1)^{j-1}\sum_{ l \in \mathbb{N} } (-1)^{l}\binom{n+k}{l}\binom{n+k-l-1}{n+k-j}\\
&=&(-1)^{j-1}\binom{-1}{j-1}=1,\\
\end{eqnarray*}
proving (\ref{last-equation}).
This then concludes the proof of \eqref{second-part}, and thus the Proposition.
\end{proof}


\begin{thebibliography}{BVY3}

\bibitem[AB]{AB-details-of-Heisenberg} D. Addabbo and K. Barron, The level two Zhu algebra for the Heisenberg vertex operator algebra, 	{\it Commun. in Alg.}, to appear; arXiv:2206.12982 [math.QA].

\bibitem[AN]{AN}Y Arike. and K. Nagatomo, Some remarks on pseudo-trace functions for orbifold models associated with symplectic fermions, {\it Internat. Journ. Math.} {\bf 24} (2013).

\bibitem[BVY1]{BVY} K. Barron, N. Vander Werf and J. Yang, Higher level Zhu algebras and modules for vertex operator algebras,  {\it J. Pure Appl. Alg.} {\bf 223} (2019), 3295--3317.

\bibitem[BVY2]{BVY-Heisenberg} K. Barron, N. Vander Werf and J. Yang, The level one Zhu algebra for the Heisenberg vertex operator algebra, to appear in:  ``Affine, Vertex and W-algebras", ed. D. Adamovi\`c and P. Papi, {\it Springer INdAM Series} {\bf 37} (2019), 37--64.

\bibitem[BVY3]{BVY-Virasoro} K. Barron, N. Vander Werf and J. Yang, The level one Zhu algebra for the Virasoro vertex operator algebra,  in: ``Vertex Operator Algebras, Number Theory and Related Topics",   ed. M. Krauel, M. Tuite, and G. Yamskulna, {\it Contemp. Math., Amer. Math. Soc.} {\bf 753} (2020), 17--43.

\bibitem[C]{C} A. \^Ceperi\'c, Reprezentacije logaritamskih verteks-algebri i struktura njihovih viših Zhuovih algebri, {PhD thesis} (2021).


\bibitem[DLM]{DLM}
C. Dong, H. Li and G. Mason, Vertex operator algebras and associative
algebras, {\em J. Alg}. {\bf 206} (1998), 67--98.


\bibitem[FZ]{FZ}
I. Frenkel and Y.-C. Zhu, Vertex operator algebras associated to representations of affine and Virasoro algebras, {\it Duke Math. J.} {\bf 66} (1992), 123--168.


\bibitem[KL]{KarL}
M. Karel and H. Li, Certain generating subspaces for vertex operator algebras, {\em J. Alg.} {\bf 217} (1999), 393--421.

\bibitem[K]{K} D. E. Knuth, {\it The art of computer programming. Vol. 1. Fundamental algorithms. Third edition},  Addison-Wesley, Reading, MA, 1997. xx+650 


\bibitem[LL]{LL}J. Lepowsky and H. Li, {\em Introduction to Vertex Operator Algebras and Their Representations}, Progress in Math., Vol. 227, Birkh\"auser, Boston, 2003.


\bibitem[M]{Miyamoto2004} M. Miyamoto, Modular invariance of vertex operator algebras satisfying $C_2$-cofiniteness, {\it Duke Math. J.} {\bf 122} (2004), 51--91. 


\bibitem[W]{W} W. Wang, Rationality of Virasoro vertex operator algebras, {\em Duke Math. J}. 
{\bf 71} (1993), 197--211.

\bibitem[V]{V} J. van Ekeren, Higher level twisted Zhu algebras, {\it J. Math. Phys.} {\bf 52} (2011), 052302, 36 pp. 

\bibitem[Z]{Z}
Y.-C. Zhu, Modular invariance of characters of vertex operator
algebras. {\em J. Amer. Math. Soc.} {\bf 9} (1996), 237-302.

\end{thebibliography}
\end{document}